\title[An Example of Rapid Evolution of Complex Cycles]{An Example of Rapid Evolution of Complex Limit Cycles}
\author[Nikolay Dimitrov]{}
\email{dimitrov@math.mcgill.ca}
\subjclass[2010]{Primary: 37F75, 34M35, 30F10; Secondary: 37M10,
57R22}
\keywords{Holomorphic foliation, complex limit cycle, Poincar\'e
map, Riemann surface, covering space, fiber bundle}
\newtheorem*{Pontryagin}{Pontryagin's Theorem}
\newtheorem*{main}{Main Result}
\newtheorem{theorem}{Theorem}
\newtheorem{lemma}{Lemma}[section]
\newtheorem{proposition}{Proposition}
\newtheorem{definition}{Definition}
\newtheorem{corollary}{Corollary}[section]
\newcommand{\Integers}{\mathbb{Z}}
\newcommand{\Naturals}{\mathbb{N}}
\newcommand{\CC}{\mathbb{C}^2}
\newcommand{\cptwo}{\mathbb{CP}^2}
\newcommand{\RR}{\mathbb{R}^2}
\newcommand{\cc}{\mathbb{C}}
\newcommand{\Band}{\mathbb{B}}
\newcommand{\Disc}{\mathbb{D}}
\newcommand{\Torus}{\mathbb{T}}
\newcommand{\ThreeSphere}{\mathbb{S}^3}
\newcommand{\Azeroq}{A_0(q_0)}
\newcommand{\Azeroprimeq}{A'_0(q_0)}
\newcommand{\Aoneprimeq}{A'_1(q_0)}
\newcommand{\Ahatzero}{\hat{A}_0}
\newcommand{\Ahatzeroprime}{\hat{A}'_0}
\newcommand{\Ahatoneprime}{\hat{A}'_1}
\newcommand{\Ahat}{\hat{A}}
\newcommand{\alphaq}{\alpha_{q',\e}}
\newcommand{\Atlas}{\mathcal{A}}
\newcommand{\betaq}{\beta_{q',\e}}
\newcommand{\h}{c}
\newcommand{\Deltae}{\Delta_{\e}}
\newcommand{\deltae}{\delta_{\e}}
\newcommand{\deltahat}{\hat{\delta}}
\newcommand{\etamax}{\eta_{\text{max}}}
\newcommand{\etwostar}{\e^{**}}
\newcommand{\Gammatilde}{\tilde{\Gamma}}
\newcommand{\Gammahat}{\hat{\Gamma}}
\newcommand{\gammahat}{\hat{\gamma}}
\newcommand{\gammatilde}{\tilde{\gamma}}
\newcommand{\e}{\varepsilon}
\newcommand{\estar}{\e^{*}}
\newcommand{\CtimesS}{\cc \times S_1}
\newcommand{\Fol}{\mathcal{F}_{a,\e}}
\newcommand{\Foliation}{\mathcal{F}}
\newcommand{\Field}{F_{a,\varepsilon}}
\newcommand{\Ftilde}{\tilde{F}}
\newcommand{\Fieldzero}{F_0}
\newcommand{\Folzero}{\Foliation_0}
\newcommand{\Fole}{\mathcal{F}_{\e}}
\newcommand{\Fielde}{F_{\varepsilon}}
\newcommand{\Folehat}{\hat{\mathcal{F}}_{\e}}
\newcommand{\Leafae}{L_{a,\e}}
\newcommand{\Leafe}{L_{\e}}
\newcommand{\Leafaeq}{L_{a,\e}(q)}
\newcommand{\Leafhate}{\hat{L}_{\e}}
\newcommand{\Leafhatezp}{\hat{L}_{\e}(z,p)}
\newcommand{\Phat}{\hat{P}}
\newcommand{\Pmap}{P_{a,\e}}
\newcommand{\Pmape}{P_{\e}}
\newcommand{\Pmaphat}{\hat{P}_{a,\e}}
\newcommand{\Pmaphate}{\hat{P}_{\e}}
\newcommand{\etatilde}{\tilde{\eta}}
\newcommand{\Uq}{U_{q'}}
\newcommand{\Uhatzzero}{\hat{U}_{\hat{z}_0}}
\newcommand{\Phihatzzero}{\hat{\phi}_{\hat{z}_0,\e}}
\newcommand{\Uhatzone}{\hat{U}_{\hat{z}_1}}
\newcommand{\Phihatzone}{\hat{\phi}_{\hat{z}_1,\e}}
\newcommand{\Phihatzoneprime}{\hat{\phi}_{\hat{z}_1,\e'}}
\newcommand{\zhat}{\hat{z}}
\begin{document}

\maketitle

\centerline{\scshape Nikolay Dimitrov}
\medskip
{\footnotesize
 \centerline{Department of Mathematics and Statistics}
   \centerline{McGill University}
   \centerline{805 Sherbrooke W.}
   \centerline{Montreal, QC H3A 2K6, Canada}
 }

\begin{abstract}
In the current article we study complex cycles of higher
multiplicity in a specific polynomial family of holomorphic
foliations in the complex plane. The family in question is a
perturbation of an exact polynomial one-form giving rise to a
foliation by Riemann surfaces. In this setting, a complex cycle is
defined as a nontrivial element of the fundamental group of a leaf
from the foliation. In addition to that, we introduce the notion
of a multi-fold cycle and show that in our example there exists a
limit cycle of any multiplicity. Furthermore, such a cycle gives
rise to a one-parameter family of cycles continuously depending on
the perturbation parameter. As the parameter decreases in absolute
value, the cycles from the continuous family escape from a very
large subdomain of the complex plane.
\end{abstract}

\section{Introduction} \label{Section_Introduction}

Limit cycles of planar polynomial vector fields have long been a
focus of extensive research. For instance, one of the major
problems in this area of dynamical systems is the famous Hilbert's
16th problem \cite{I02} asking about the number and the location
of the limit cycles of a polynomial vector field of degree n in
the plane. Since the original Hilbert's problem continues to be
very persistent, some simplifications have been considered as
well. Among them is the so called infinitesimal Hilbert's 16
problem \cite{I02}, \cite{IY} concerned with the number of limit
cycles that can bifurcate from periodic solutions of a polynomial
Hamiltonian planar system by a small polynomial perturbation.
Recently, an answer to this question has been given in an article
by Binyamini, Novikov and Yakovenko \cite{BNY}.

When studying a planar polynomial vector field, an extension to
the complex domain proves to be helpful, an idea that can be
attributed to Petrovskii and Landis \cite{PL55}, \cite{PL57}. In
this way a polynomial complex vector field is obtained and the
holomorphic curves tangent to it form a partition of the complex
plane by Riemann surfaces, called a \emph{polynomial complex
foliation with singularities}, or in short \emph{polynomial
complex foliation} \cite{I02}, \cite{IY}.

Following the idea of complexification, polynomial deformations of
planar Hamiltonian vector fields could be extend to $\CC$. More
precisely one could consider the complex line field
\begin{equation}\label{GeneralFoliation}
\ker(dH + \e \omega)
\end{equation}
with a one-form $\omega = Adx + Bdy,$ where $A,B$ and $H \in
\cc[x,y]$ are polynomials with complex coefficients and $\e$ is a
small complex parameter.

For the purposes of the current study, we focus our attention on a
specific example. Let $H$ be the simple polynomial
\begin{displaymath}
H = x^2 + y^2.
\end{displaymath}
Choose polynomial one-forms $\omega_1$ and $\omega_2$ as follows:
\begin{align*}
\omega_1 = (H-1)(y dx - x dy) \,\,\,\, \text{and} \,\,\,\,
\omega_2 = y \, dH.
\end{align*}
Consider the two parameter family of complex line fields
\begin{equation} \label{ExampleFoliation}
\Field = \ker \Bigl(dH + \e(\omega_1 + a\omega_2)\Bigl),
\end{equation}
where $\e$ and $a$ are the parameters. Notice that the family is
of the form (\ref{GeneralFoliation}).

As mentioned earlier, the holomorphic curves tangent to $\Field$
form a foliation of Riemann surfaces in $\CC$ further denoted by
$\Fol(\CC)$. For example, consider the Riemann surface
$$S_1=\{(x,y) \in \CC \, | \, x^2 + y^2 =1\}.$$ As we are going
to see in the next section \ref{Section_Main_Theorem}, the surface
$S_1$ is tangent to the complex line field $\Field$ for any value
of the parameters $a$ and $\e$ so it is a leaf of $\Fol(\CC)$. Fix
the unit circle $\delta_0 = S_1 \cap \RR$. Notice, that in the
case of real $a$ and $\e$ the phase curves of
(\ref{ExampleFoliation}) restricted to $\RR$ are topologically
either lines or circles, i.e. curves with either a trivial or a
non-trivial (isomorphic to $\Integers$) fundamental group. For
example, $\delta_0$ is such a circular phase curve. This simple
observation leads us to the definition of a marked complex cycle.

\begin{definition} \label{cycle}
A marked complex cycle of a complex foliation is a nontrivial
element of the fundamental group of a leaf from the foliation with
a marked base point.
\end{definition}

\noindent We denote a marked complex cycle by $(\Delta, q)$ where
$\Delta$ is the homotopy class of loops on the leaf, all passing
through the same base point $q$. Each loop from $\Delta$ will be
called a \emph{representative} of the cycle. In general, a real
phase curve of a polynomial vector field in $\RR$ extends to a
Riemann surface tangent to the vector field's complexification in
$\CC.$ Thus, a closed phase curve in $\RR$ defines a loop on the
corresponding complex leaf, giving rise to a nontrivial element
from the fundamental group of that leaf \cite{I02}. In other
words, a real closed phase curve is a marked complex cycle on its
complexification. As an illustration, the leaf $S_1$ is the
complexification of the real trajectory $\delta_0$. The surface
$S_1$ is topologically a cylinder and $\delta_0$ is a nontrivial
loop on it. Denoting by $q_0$ the point $(1,0) \in S_1$ and by
$\Delta_0$ the homotopy class of $\delta_0$ relative to $q_0$ we
obtain a marked complex cycle $(\Delta_0, q_0)$ of $\Fol(\CC)$.

When $\e=0$ the line field (\ref{ExampleFoliation}) will be
denoted by $\Fieldzero$ and its corresponding foliation by
$\Folzero(\CC)$. From now on, we are going to refer to
$\Folzero(\CC)$ as the \emph{integrable foliation} and to
$\Fol(\CC)$ as the \emph{perturbed foliation}. Notice that
$\Folzero(\CC)$ consists of algebraic leaves of the form $S_{\h} =
\{(x,y) \in \CC \,|\, H(x,y)=\h \}$ embedded in $\CC$, where $\h
\in \cc$. All leaves with $\h \neq 0$ are topological cylinders.
Our basic approach will be to study the complex cycles of the more
complicated $\Fol(\CC)$ by taking advantage of the simplicity of
$\Folzero(\CC)$.

One of the very useful tools for converting some of the
topological properties of the foliation into dynamical properties
of a holomorphic map of complex dimension one is the so called
\emph{Poincar\'e displacement map} \cite{I02}, \cite{IY}. Next, we
present a construction of it in the case of example
(\ref{ExampleFoliation}). Let $T'$ be a complex segment (a small
disc on a complex line in $\CC$) passing through $q_0$ and
transverse to the surface $S_1$. Consider an annular neighborhood
$A(\delta_0)$ of $\delta_0$ on the surface $S_1$. Next, take a
tubular neighborhood $N(\delta_0)$ of $A(\delta_0)$ in $\CC$. It
is diffeomorphic to a direct product $A(\delta_0) \times \Disc,$
where $\Disc \subset \cc$ is the unit disc. Let $\varrho$ be the
projection of $N(\delta_0)$ onto $A(\delta_0)$ with respect to
that direct product structure. Without loss of generality, we can
think that $T'=\varrho^{-1}(q_0)$. Let $T \subset T'$ be a small
enough open neighborhood of $q_0$ in $T'$. Take any point $q \in
T$ and consider the leaf $\Leafaeq$ from the foliation $\Fol(\CC)$
that passes through $q$. Starting from $q \in \Leafaeq$, lift the
loop $\delta_0$ to the unique path on $\Leafaeq$ that covers
$\delta_0$ under the projection $\varrho$. The second end-point of
this lift is again on $T'$ and we denote it by $\Pmap(q)$. As a
result, we obtain a one-to-one correspondence $\Pmap : T \to T'$
which, by the analytic dependence of the leaves of $\Fol(\CC)$ on
initial conditions \cite{IY}, is a holomorphic map. In addition,
notice that $\Pmap(q_0) = q_0$ for all $a$ and $\e$.

Observe that by construction, if we consider another loop
$\delta'_0 \subset S_1$ passing thorough $q_0$ and homotopic to
$\delta_0$ on $S_1$ then the Poincar\'e map with respect to
$\delta'_0$ will be identical to $\Pmap$, possibly on a smaller
cross section $T$. This is because the homotopy between $\delta_0$
and $\delta'_0$ can be lifted to a homotopy on any leaf $\Leafaeq$
passing close enough to $S_1$. Therefore, the endpoints on $T'$ of
the lifts of $\delta_0$ and $\delta'_0$ on $\Leafaeq$ will be the
same. Similarly, $\Pmap$ does not depend on the choice of a
product structure on $N(\delta_0)$. In fact, by the tubular
neighborhood theorem \cite{Hr} any two product structures on
$N(\delta_0)$ are isotopic via an isotopy of $N(\delta_0)$ that
fixes $A(\delta_0)$ point-wise. Therefore, as a point-set, the
lift of $\delta_0$ on any near-by leaf with respect to a
projection from another product structure will be the same as the
lift obtained via $\varrho$. The difference will be only in the
parametrization of the lift.

The Poincar\'e map $\Pmap$ has the property that if two points
from the cross-section $T$ are in the same orbit of the map then
they belong to the same leaf of the foliation. Moreover, a marked
complex cycle of $\Fol(\CC),$ with a base point on $T$ and a
representative in $N(\delta_0)$ that covers $m$ times the loop
$\delta_0$ gives rise to an $m$-periodic orbit of $\Pmap$. The
converse is also true \cite{PL55}, \cite{PL57}. An $m$-periodic
orbit corresponds to a marked complex cycles of $\Fol(\CC)$ with a
base point on $T$ and a representative in $N(\delta_0)$ that
covers $\delta_0$ a number of $m$ times.

\begin{definition} \label{definition_m-fold_cycle}
A marked cycle of (\ref{ExampleFoliation}) that corresponds to an
$m$-periodic orbit of $\Pmap$ is called an $m$-fold cycle.
Whenever $m>1$ and we do not want to specify the number $m$, we
call the $m$-fold cycle a multi-fold cycle.
\end{definition}

Notice that whenever an $m-$fold cycle of $\Fol(\CC)$ corresponds
to an $m$-periodic orbit of $\Pmap$, the cycle also gives rise to
$m$ fixed points of the iterated map $\Pmap^m$.

\begin{definition} \label{definition_limit_cycle}
An $m$-fold limit cycle of $\Fol(\CC)$ is an $m$-fold cycle that
corresponds to an isolated fixed point of $\Pmap^m$.
\end{definition}

The case $m=1$ has been extensively studied. In fact, the real
cycles of a planar polynomial line field of the form
(\ref{GeneralFoliation}) extend to $1-$fold cycles of its
complexificaion. The aforementioned infinitesimal Hilbert's 16th
problem \cite{BNY}, \cite{I02} treats exactly the special case
$m=1.$ The following classical result, known as Pontryagin's
criterium \cite{Po} can be stated in the following form.

\begin{Pontryagin}Let $\delta_{\h}$ be an analytic
family of simple closed curves on the corresponding leaves
$S_{\h}$ of foliation (\ref{GeneralFoliation}) when $\e=0$.
Consider the analytic function $I(\h)= \int_{\delta_{\h}} \omega$.
If there exists a value $\h_0$ such that $I(\h_0)=0$ and $I'(\h_0)
\neq 0$ then there exists a continuous family $\delta\_{\e}$ of
loops, each representing a 1-fold complex limit cycle of
(\ref{GeneralFoliation}). Moreover, for $\e$ close to $0$, the
loops $\delta_{\e}$ always stay close to $\delta_{\h_0}$ and
$\delta_{\e} \to \delta_{\h_0}$ as $\e \to 0.$
\end{Pontryagin}

In contrast to 1-fold cycles, little is known about multi-fold
ones. That is why, the goal of this article is to shed some light
on the case $m>1$. During a series of informal discussions, Y.
Ilyashenko proposed the following questions in the spirit of
Petrovskii and Landis' works \cite{PL55} and \cite{PL57}:
\vspace{1mm}

\noindent \textbf{Q1.} {\em Are there polynomial families of form
(1) with Poncar\'e maps that have isolated periodic orbits of
arbitrary period $m>1$?} \vspace{1.5mm}

\noindent \textbf{Q2.} {\em If $m>1,$ what may happen to an
$m$-fold limit cycle when $\e$ approaches $0$?} \vspace{1.5mm}

\noindent \textbf{Q3.} {\em Does a multi-fold limit cycle settle
on a leaf of $\Folzero(\CC)$ as $\e \to 0?$} \vspace{2mm}

For the rest of this article we try to give some answers to
Ilyashenko's questions posed above for the particular family
(\ref{ExampleFoliation}). Loosely stated, the main statement of
the current paper is the following: 

\begin{main}
Multi-fold limit cycles of all possible periods appear in the
family (\ref{ExampleFoliation}) when the complex parameters $a$
and $\e$ are chosen appropriately. Moreover, each of these cycles
extends to a continuous family with respect to $\e$. Finally, when
$\e$ tends to zero, the multi-fold limit cycles from the family
escape from a very large open subdomain of $\CC$ that contains the
surface $S_1$.
\end{main}

\noindent The precise formulation of the claim above will be
stated in the next section as theorem \ref{The_Main_Theorem}.

The proof of the main result starts with a fairly explicit
construction of the Poincar\'e map $\Pmap$. After that, it is
established that periodic orbits of all periods of $\Pmap$
bifurcate from the fixed point $q_0$. This immediately yields that
$m$-fold limit cycles of all possible $m \in \Naturals$ bifurcate
from the cycle $(\Delta_0,q_0)$ of the family
(\ref{ExampleFoliation}). For the second part of the statement, we
exploit more thoroughly the connection between the topological
properties of $\Fol(\CC)$ and the dynamics of the Poincar\'e map
$\Pmap$. First, we construct a very large smooth surface
transverse to the foliation $\Fol(\CC).$ Then, we extend the
Poincar\'e map $\Pmap$ on this cross-section. We call it \emph{a
non-local Poincar\'e map}. Each multi-fold cycle of $\Fol(\CC)$
generated in the first part of our main result corresponds to a
periodic orbit of $\Pmap$ and together with that, determines a
well-defined free homotopy class of loops in an open fibred
subdomain of $\CC$. The topology and the fiber structure of this
subdomain comes from $\Folzero(\CC)$. Moreover, as the
cross-section surface is transverse to $\Fol(\CC)$, we can induce
a complex structure on it so that $\Pmap$ is holomorphic. Finally,
the construction of the non-local complex analytic Poincar\'e map
allows us to establish that the behavior of a multi-fold limit
cycle is quite different from the behavior of a 1-fold limit cycle
as $\e$ tends to zero. By Pontryagin's theorem, the latter always
stays close to some cycle from $\Folzero(\CC)$ and converges to it
as $\e$ converges to zero. In contrast to the behavior of a
$1-$fold limit cycle, a multi-fold one tends to escape from a very
large domain in $\CC$ when $\e$ approaches $0$. We call this
phenomenon a \emph{rapid evolution of the multi-fold limit cycle}.

The occurrence of quick escape of cycles is not that surprising if
one recalls the dynamics of holomorphic maps with parabolic fixed
points and their perturbations \cite{CG}, \cite{M}. In our case,
$\Pmap$ is a two parameter perturbation of the identity map. It is
well known that as $\Pmap$ approaches the identity its
$m-$periodic orbits, for $m>1$, leave the map's domain. Since a
periodic point of $\Pmap$ represents a multi-fold cycle of
$\Fol(\CC)$, the multi-fold cycles should escape too. The
challenge in our study is to establish the existence of periodic
orbits of period $m>1$ for the Poincar\'e map $\Pmap$ of the
foliation $\Fol(\CC)$ and to extend the map's domain as much as
possible. For that reason we need to analyze the link between the
foliation and the map as well as to explore the topology of
$\Folzero(\CC)$ globally which makes the problem quite more
interesting.

So far, the third question from the list above stays unanswered.
The information we have on rapid evolution reveals an interesting
insight. If the answer to that question is positive, then before a
multi-fold limit cycle can reach an algebraic leaf as $\e \to 0$,
its representatives should change their topological properties
somewhere along the way. This means that there is a possibility
that the cycle settles on a critical leaf of $\Folzero(\CC)$ or
goes through one or several critical leaves of $\Folzero(\CC),$
settling on a regular leaf. Since (\ref{GeneralFoliation}) is
polynomial, it extends to a foliation on $\cptwo.$ Thus, another
possibility is an interaction with the line at infinity.

We finish this section with a discussion about another interesting
and important issue, related to question $1$ above. A central
problem in the study of multi-fold limit cycles is their existence
in families of polynomial foliations of the form
(\ref{GeneralFoliation}). Ideally, one would like to establish
existence of multi-fold limit cycles in general families of type
(\ref{GeneralFoliation}). Heuristically, we can follow the
following steps. Using Pontryagin's theorem, we could find a
family of $1$-fold cycles which gives a family of isolated fixed
points for the corresponding Poincar\'e map $\Pmape$. For
infinitely many values of $\e$ in any neighborhood of $0$, the
derivative of $\Pmape$ evaluated at the fixed point will be an
$m$-th root of unity. Thus, for such $\e$ a local continuous
family of $m$-periodic isolated orbits could bifurcate from the
fixed point. This will happen as long as some of the resonant
terms of the map's normal form do not vanish, i.e. the map is not
analytically equivalent to a rotation. Since having all zero
resonant terms is an extremely special property for maps with
root-of-unity multiplier, we can expect that the Poincar\'e
transformations for most foliations of the form
(\ref{GeneralFoliation}) will have a lot of isolated periodic
orbits and thus, the foliations themselves will have many
multi-fold limit cycles. The only obstacle in this strategy is the
verification that some of the resonant term coefficients of the
map's normal form are nonzero. This fact imposes a challenge since
the connection between the polynomial foliation and its Poincar\'e
transformation is implicit and indirect.

\section{The main theorem} \label{Section_Main_Theorem}
Before giving a precise statement of the main result of the paper,
we are going to fix some notations and give some definitions.

Let us verify that both $S_0 = \{(x,y) \in \CC \,\, | \,\, H(x,y)
= 0\}$ and $S_1 = \{(x,y) \in \CC \,\, | \,\, H(x,y) = 1\}$ are
leaves of the foliation $\Fol$ for any value of the parameters $a$
and $\e$. Having in mind that $dH \wedge dH = 0$, consider the
wedge product
\begin{align} \label{Formula_wedge_product}
\nonumber \bigl( dH + &\e(H - 1)(y \, dx - x \, dy) + \e a \,\, y \, dH \bigr) \wedge dH = \\
\nonumber = dH \wedge dH + &\e (H-1)(y \, dx - x \, dy) \wedge dH
+ \e
a\,\,y \, dH \wedge dH =\\
= &\e (H-1)(y \, dx - x \, dy) \wedge dH = \\
\nonumber = 2 &\e (H-1)(y \, dx - x \, dy) \wedge (x \, dx + y \, dy) =\\
\nonumber = 2 &\e (H-1) H \, dx \wedge dy.
\end{align}
Since $H=0$ on $S_0$ and $H=1$ on $S_1$, the wedge product
(\ref{Formula_wedge_product}) becomes zero when restricted to
either $S_0$ or $S_1$, hence both of them are tangent to the
complex line-field $\Field$, which implies that both of them are
leaves of $\Fol(\CC)$ for any $a$ and $\e$.

Look at the polynomial $H = x^2 + y^2$ as a map $H : \CC \to \cc$.
Consider the punctured plane of regular values $B=\cc - \{0\}$ and
its preimage $E=H^{-1}(B).$ Clearly, $E$ is just $\CC$ with the
critical level set $S_0$ of $H$ removed. Recall $1 \in B$ and
hence $S_1 \subset E$, which is a topological cylinder (or a
twice punctured sphere if you prefer). 

Observe that every leaf of $\Fol(\CC)$ different from $S_0$ is
entirely contained in the domain $E$. Denote by $\Fol$ the
foliation $\Fol(\CC)$ with the leaf $S_0$ removed. Then $\Fol$ is
a foliation without singularities in $E$. In particular, when
$\e=0$ the restricted foliation $\Folzero$ consists of all fibers
of $H$ with the exception of the critical one $S_0 = H^{-1}(0)$.

Let $0< \rho_0 < R_0$, thinking of $\rho_0$ as very small and
$R_0$ as large. Define the annulus $A_0 = \{\h \in \cc \,\, | \,\,
\rho_0 < |\h| < R_0\}$. Consider the preimage $E_0 = H^{-1}(A_0)$.
Then, the set $E_0$ is the large open subdomain of $\CC$ from
which the multi-fold cycles are going to escape, according to our
main statement.

Next, we look at multi-fold cycles from a topological point of
view rather than dynamically.

\begin{definition} \label{Definition_delta_m_fold_vertical}
A loop contained in $E$ is called $m$-fold vertical provided that
it is free homotopic to $\delta_0^m$ inside the domain $E$. A
marked complex cycle of $\Fol$ is called $m$-fold vertical
provided that it has an $m$-fold vertical representative contained
in $E$.
\end{definition}

As one would expect, if a marked complex cycle of $\Fol$ has at
least one $m$-fold vertical representative in $E$, then all of its
representatives are $m$-fold vertical. Indeed, let $\delta$ and
$\delta'$ be two loops from the same marked cycle of $\Fol$ and
let $\delta$ be free-homotopic in $E$ to $\delta_0^m$. Then both
$\delta$ and $\delta'$ are homotopic on the same leaf of $\Fol$
which, in its own turn, is contained entirely in $E$. Therefore,
both representatives are homotopic to each other inside $E$ and
since $\delta$ is free-homotopic in $E$ to $\delta_0^m$, so is
$\delta'$.

More interesting is the question whether the number $m$ is a
topological invariant of an $m$-fold vertical cycle. Assume
$\delta \subset E$ is a loop representing some $m$-fold vertical
cycle of $\Fol$. Also, assume that $\delta$ is free homotopic in
$E$ to another loop $\delta'_0 \subset S_1$. As both $\delta_0^m$
and $\delta'_0$ belong to the cylinder $S_1$, whose fundamental
group is $\Integers$, the closed curve $\delta'_0$ should be free
homotopic on $S_1$ to $\delta^k$, for some $k \in \Integers$.
Therefore, the representative $\delta$ is simultaneously $m$-fold
and $k$-fold vertical. Later, in proposition
\ref{proposition_m=k}, we are going to verify that $\delta_0$ is
not null-homotopic in $E$ and $k=m$ always.

The leaves of foliation $\Fol$, given by the line field
(\ref{ExampleFoliation}), depend analytically on the two
parameters $a$ and $\e$. In order to study the phenomenon of rapid
evolution, we need to define continuous dependance of marked limit
cycles on parameters.

\begin{definition}
A family $\{(\Deltae, q_{\e})\}_{\e}$ of marked limit cycles of
$\Fol$ is called continuous with respect to $\e$ provided that
there exists a continuous family of loops $\{\deltae\}_{\e}$ such
that:

\smallskip

\noindent a) for each $\e$, the closed curve $\deltae$ belongs to
the class $\Deltae$;

\smallskip

\noindent b) the base point $q_{\e}$ varies continuously with
respect to $\e$.
\end{definition}

Let $D_r(0)=\{\e \in \cc \,\, : \,\,|\e| < r\}$ for $r>0.$ We
claim that as long as $r>0$ is chosen small enough, rapid
evolution of marked complex cycles occurs in the following form:

\begin{theorem} \label{The_Main_Theorem} For the two-parameter family
of foliations $\Fol$ given by (\ref{ExampleFoliation}) the
following statements hold:

\smallskip

\noindent 1. For any $m \in \Naturals$ large enough there exists a
complex parameter $\e_m$ near $\frac{1}{m}$ and a parameter $a_m$
such that for all $\e$ in a neighborhood of $\e_m,$ the polynomial
foliation (\ref{ExampleFoliation}) has an $m$-fold vertical limit
cycle with a representative inside the domain $E_0 \in \CC$.

\smallskip

\noindent 2. Furthermore, there is a parameter disc
$D_{r_{(m)}}(0)$ containing $\e_m$ such that for any simple curve
$\eta \subset D_{r_{(m)}}(0)$ connecting $\e_m$ to $0$ there
exists a relatively open subset $\sigma$ of $\eta,$ such that the
$m$-fold cycle from point 1 extends on $\sigma$ to a continuous
family $\{(\Delta_{\e}, q_{\e})\}_{\e \in \sigma}$ of marked
cycles of $\Fol$.

\smallskip

\noindent 3. Finally, as $\e$ moves along $\sigma$ towards $0,$ it
reaches a value $\estar \in \sigma$ such that for any $\e \in
\sigma$ past $\estar$ no $m-$fold vertical representative of
$(\Delta_{\e}, q_{\e})$ will be contained in $E_0$ anymore.
\end{theorem}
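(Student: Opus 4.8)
The plan is to build the Poincaré map explicitly near $\e = 1/m$, exhibit periodic orbits of period $m$ via a normal‑form/bifurcation argument, and then translate the dynamics back into the topology of leaves to track where the representative cycle lives as $\e$ moves toward $0$.

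First I would compute the Poincaré map $\Pmap$ along $\delta_0$ on the cross‑section $T'$ through $q_0$. Using the explicit one‑form $dH + \e(\omega_1 + a\omega_2)$ and the fact that $S_1$ is a leaf with $\Pmap(q_0) = q_0$ for all $a,\e$, I would parametrize $T'$ by a coordinate $z$ vanishing at $q_0$ and expand $\Pmap(z) = \lambda(a,\e)\, z + O(z^2)$. The multiplier $\lambda(a,\e)$ should be computable to first order in $\e$ by integrating the variational equation around $\delta_0$; it will have the form $\lambda = \exp(2\pi i \e\, c(a) + O(\e^2))$ for some explicit nonconstant function $c(a)$ coming from $\int_{\delta_0}(\text{linearization of }\omega_1 + a\omega_2)$. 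Choosing $\e_m$ near $1/m$ and then $a_m$ so that $\lambda(a_m,\e_m)$ is a primitive $m$‑th root of unity is then a matter of solving $\e\, c(a) \equiv 1/m \pmod{1}$, which is possible for $m$ large since $\e_m \approx 1/m$ is small. To get \emph{isolated} period‑$m$ orbits (hence an $m$‑fold \emph{limit} cycle) I would put $\Pmap$ into Birkhoff normal form near $q_0$ and check that the first resonant coefficient (the coefficient of $z^{m+1}$ in $\Pmap^{\circ m}$, equivalently the leading term of $\Pmap^{\circ m}(z) - z$) is nonzero; standard bifurcation theory for germs with root‑of‑unity multiplier then produces, for $\e$ in a neighborhood of $\e_m$, a genuine isolated $m$‑periodic orbit, which by the correspondence recalled in the introduction is an $m$‑fold vertical limit cycle of $\Fol$ with representative inside $N(\delta_0) \subset E_0$. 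This gives statement 1.

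For statement 2, I would use the non‑local Poincaré map: extend $\Pmap$ to a large transversal surface $\Sigma$ built from the fibration $H: E_0 \to A_0$, so that $\Sigma$ carries a complex structure making the extended map holomorphic, and the $m$‑periodic point obtained above is a fixed point of the holomorphic map $\Pmap^{\circ m}$ on $\Sigma$. As $\e$ varies along a simple curve $\eta$ from $\e_m$ to $0$ inside a small disc $D_{r_{(m)}}(0)$, the fixed point of $\Pmap^{\circ m}$ persists and moves holomorphically as long as it stays in the domain of the non‑local map and remains nondegenerate (derivative $\neq 1$); by the implicit function theorem this defines the fixed point, hence the base point $q_\e$ and a representative loop $\deltae$ obtained by lifting $\delta_0^m$, on a relatively open subset $\sigma \subset \eta$. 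Continuity of $q_\e$ and of $\deltae$ in the class $\Deltae$ is then automatic. For statement 3, the point is that $\Pmap$ is a two‑parameter perturbation of the identity, so as $\e \to 0$ the period‑$m$ fixed point of $\Pmap^{\circ m}$ must leave any fixed compact piece of the transversal — I would quantify this via the normal form, showing the fixed points of $\Pmap^{\circ m}$ sit at distance $\asymp |\e|^{-1/m}$‑type scale from $q_0$ on $\Sigma$, so they are pushed out of the part of $\Sigma$ lying over $A_0$; equivalently, no $m$‑fold vertical representative of $(\Delta_\e, q_\e)$ survives inside $E_0$ once $\e$ passes a threshold $\estar \in \sigma$. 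Combined with the fact (from Proposition \ref{proposition_m=k}) that the winding number $m$ is a topological invariant inside $E$, the representative cannot silently change its $m$‑fold vertical character while staying in $E_0$, so the escape is genuine.

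The main obstacle I expect is the nonvanishing of the resonant normal‑form coefficient governing statement 1 — this is exactly the ``indirect connection between the polynomial foliation and its Poincaré transformation'' flagged in the introduction. Because the example $H = x^2+y^2$, $\omega_1 = (H-1)(y\,dx - x\,dy)$, $\omega_2 = y\,dH$ was chosen for computability, I expect this coefficient can be extracted from an explicit (if tedious) iterated‑integral computation along $\delta_0$, or by recognizing $\Pmap$ as conjugate to a time‑$1$ map of an explicit vector field on the transversal whose higher‑order terms are visible; a secondary difficulty is making the non‑local transversal $\Sigma$ and its complex structure precise enough that the persistence argument in statement 2 and the escape estimate in statement 3 are uniform in $\e$ along $\eta$.
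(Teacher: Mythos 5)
Your high-level roadmap matches the paper's: explicit local Poincar\'e map near the parabolic resonance, a non-local Poincar\'e map on a large transversal built from the fibration $H:E\to B$, a complex structure making that non-local map holomorphic, and a translation between its periodic orbits and $m$-fold vertical cycles. The key technical step you flag for part 1 --- the nonvanishing of the leading resonant coefficient of $\Pmap^{\circ m}$ --- is indeed the crux, and the paper computes it explicitly, finding $\Pmap^m_{a,i/m}(u)=u+a c_m u^{m+1}+a^2 G_{(m)}u$ with $c_m=\pi b_m\neq 0$ where $b_m$ is the $m$-th Taylor coefficient of $(1-w)^{-1/2}$. (One small mismatch: in the paper the multiplier $e^{2\pi\e}$ of $\Pmap$ at $q_0$ is independent of $a$ to leading order; $\e$ alone is tuned to $i/m$ to hit the root of unity, while $a$'s sole role is to switch on the resonant nonlinearity. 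Your ansatz $\lambda=\exp(2\pi i\e\,c(a)+O(\e^2))$ with a nonconstant $c(a)$ does not reflect this structure, though the conclusion you extract from it is the right one.)

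Two genuine gaps in parts 2 and 3. For part 2, you propose to continue the period-$m$ orbit along $\eta$ by the implicit function theorem ``as long as it remains nondegenerate''; but there is no reason the derivative of $\Pmap^{\circ m}$ at the periodic point should stay $\neq 1$ along all of $\eta$, and your argument stops exactly where the paper's must keep going. The paper instead applies Hartogs' theorem and the Weierstrass preparation theorem to $\tilde{F}(\zeta,\e)=\Phat^{(m)}_\e(\zeta)-\zeta$ near the periodic point, factoring $\tilde F=\prod_j(\zeta-\alpha_j(\e))\theta(\zeta,\e)$, and then follows one branch $\alpha_1(\e)$ of the zero locus; this yields a continuous family of periodic orbits even through degenerate parameter values, which the IFT cannot do.

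For part 3, your escape mechanism --- a normal-form estimate placing the periodic points at distance on the order of $|\e|^{-1/m}$ from $q_0$ --- does not hold. From $P^m_{a,\e}(u)-u\approx \bigl(e^{2\pi m\e}-1\bigr)u + a c_m u^{m+1}$, the nontrivial period-$m$ solutions satisfy $u^m\approx -(e^{2\pi m\e}-1)/(a c_m)$, so $|u|\to 0$ as $\e\to 0$: locally the orbit \emph{collapses} toward $q_0$, it does not blow up. (You appear to be conflating the $\sim 1/|\e|$ escape \emph{time} through a parabolic gate with the \emph{location} of periodic points.) Consequently a purely local estimate at $q_0$ cannot produce the conclusion, and the paper's argument is of a completely different, global nature: assuming the orbit stayed over the annulus $A_0$ along $\eta_{\max}$, it extracts a limit orbit by compactness, shows $\e^{**}=\inf\eta_{\max}$ must be $0$ via Weierstrass preparation, then writes $P_{q^*,\e}(\zeta)=\zeta+\e^l I(\zeta)+\e^{l+1}R(\zeta,\e)$ at the limit point $q^*$ and uses Rouch\'e to show both $P-\mathrm{id}$ and $P^m-\mathrm{id}$ have the same number of zeros (equal to that of $I$) in a small disc, yet the genuine period-$m$ orbit forces $P^m-\mathrm{id}$ to have at least $m$ more zeros than $P-\mathrm{id}$. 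This contradiction forces the orbit to leave the part of the transversal over $A_0$, hence to leave $E_0$, before $\e$ can reach $0$. You would need to replace your distance estimate with an argument of this global, counting character (or some equally global substitute) to make part 3 go through.
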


\section{The local Poincar\'e map} \label{Section_Local_Poincare_map}
We begin our investigations with the construction of the
Poincar\'e transformation locally and the computation of some of
its terms.

Define $A(\delta_0)$ as a tubular neighborhood of $\delta_0$ on
the surface $S_1$ and $N(\delta_0)$ as a tubular neighborhood of
$A(\delta_0)$ in $\CC.$ Let $$\Band_{r_0} =\{\zeta \in \cc \, : \,
|\text{Im}(\zeta)| < r_0 \}$$ be a an infinite horizontal band in
$\cc$ of width $r_0$ and let $$D_{r_0}(1) = \{\xi \in \cc \, : \,
|\xi - 1| \leq r_0\}$$ be the disc of radius $r_0$ centered at
$1$. Consider the map
$$f_1 : \Band_{r_0} \times D_{r_0}(1) \to N(\delta_0) \,\,\, \text{defined by}
\,\,\, f_1 : (\zeta,\xi) \mapsto (\xi \cos{\zeta}, \xi
\sin{\zeta}).$$ Without loss of generality, we can think that
$f_1(\Band_{r_0} \times D_{r_0}(1)) = N(\delta_0).$ In other
words, $f_1$ can be thought of as the universal covering map of
$N(\delta_0)$. Notice, that we also have $f_1(\Band_{r_0} \times
\{1\}) = A(\delta_0) \, \subset \, S_1$.

The pull-back $f_1^*\Field$ on $\Band_{r_0} \times D_{r_0}(1)$ of
the line field $\Field$ is
$$f_1^*\Field = \ker{\bigl(d(\xi^2) - \e (\xi^2 - 1)\xi^2 \, d\zeta + a \e \, \xi \sin{\zeta} \, d(\xi^2)\bigr)}.$$
For $0<r_1<1,$ define the map $$f_2 : \Band_{r_0} \times
D_{r_1}(0) \to \Band_{r_0} \times D_{r_0}(1)\,\,\, \text{where}
\,\,\, f_2 : (z,w) \mapsto \Bigl(z, \frac{1}{\sqrt{1 -
w}}\Bigr).$$ Composing the maps $f_1$ and $f_2$ we obtain $$f =
f_1 \circ f_2 \, : \, \Band_{r_0} \times D_{r_1}(0) \,
\longrightarrow \, N(\delta_0).$$ Then the pull-back $f^*\Field$
is
$$f^*\Field = \ker\left(\frac{1}{(1-w)^2}\Bigl(dw - \e\,w dz + \e a \, \frac{\sin{z}}{\sqrt{1-w}}\, dw
\Bigr)\right)$$ and since $\frac{1}{(1-w)^2}$ is well defined and
nonzero for $w \in D_{r_1}(0)$, we can cancel it out and the line
field becomes
$$f^*\Field = \ker{\Bigl(dw - \e\,w dz + \e a \, \frac{\sin{z}}{\sqrt{1-w}}\, dw
\Bigr)}.$$ The holomorphic function $\mu_{\e}(z)=e^{-\e z}$ is
nonzero everywhere, so
\begin{align*}
f^*\Field &= \ker{\Bigl(e^{-\e z} dw - \e\,w e^{-\e z}\,dz + \e
a \, \frac{e^{-\e z} \sin{z}}{\sqrt{1-w}}\, dw \Bigr)} \\
&= \ker{\Bigl(d(w e^{-\e z}) + \e a \, \frac{e^{-\e z}
\sin{z}}{\sqrt{1-w}}\, dw \Bigr)} \\
&= \ker{(d J^{(\e)} + a \omega^{(\e)})}
\end{align*}
$$\text{where} \,\,\,\, J^{(\e)} = w e^{-\e z} \,\,\,\, \text{and}\,\,\,\, \omega^{(\e)} =
\frac{e^{-\e z} \sin{z}}{\sqrt{1-w}}\, dw.$$

Our next step is to define the Poincar\'e transformation for the
foliation $\Fol$, using the local chart $f$ on the tubular
neighborhood $N(\delta_0)$ of the loop $\delta_0.$ Denote the
desired map by
$$\Pmap \, : \, D_{r_1}(0) \, \longrightarrow \, \cc.$$ We are
going to explain how it is constructed.

Define the path $\deltahat_0^{(m)} = \{(t,0) \in \Band_{r_0}
\times \{0\} \,\, | \,\, t \in [0,2\pi m]\}$ and whenever $m=1$
use the notation $\deltahat_0 = \deltahat_0^{(1)}$. Then
$f(\deltahat_0^{(m)})=\delta_0^m$. For $a$ in a neighborhood of
$0$ and for an appropriate choice of the radius $r_1$, the segment
$\deltahat_0^{(m)}$ can be lifted to a path
$\delta_{a,\e}^{(m)}(u)$ on the leaf of $\Foliation^{a,\e}$
passing through the point $(0,u) \in \{0\}\times D_{r_1}(0),$ so
that if $pr_1 : (z,w) \mapsto z$ then
$pr_1(\delta_{a,\e}^{(m)}(u))=\deltahat_0^{(m)}.$ Again, as
before, whenever $m=1$ we omit the superscript $(m)$ and we write
$\delta_{a,\e}(u) = \delta_{a,\e}^{(1)}(u)$. The lift
$\delta_{a,\e}(u)$ has two endpoints. The first one is $(0,u)$ and
the second one we denote by $(2\pi,\Pmap(u))$. When a=0, the map
$P_{0,\e}(u)$ comes from the foliation $\Foliation_{0,\e}$ which
in our tubular neighborhood is given by $\ker(d(we^{-\e z})).$
Then, $\delta_{0,\e}(u) = \{(t,u e^{\e t}) \, : \, t \in
[0,2\pi]\}$ and so $P_{0,\e}(u)=e^{2 \pi \e}u.$ Since
$\delta_{a,\e}(0)=\deltahat_0$, the equality $\Pmap(0)=0$ holds
for all $(a,\e)$. As a result, the Poincar\'e transformation can
be written down as
$$\Pmap(u)=e^{2 \pi \e}u + a I(u,\e)u + a^2 G(u,a,\e)u$$ and
its $k$-th iteration can be expressed as
$$P^k_{a,\e}(u)=e^{2 k \pi \e}u + a I_{(k)}(u,\e)u + a^2 G_{(k)}(u,a,\e)u.$$
If $\e=\frac{i}{m}$ and after $m$ iterations the map becomes
$$P^m_{a,\frac{i}{m}}(u)=u + a I_{(m)}\Bigl(u,\frac{i}{m}\Bigr)u + a^2 G_{(m)}\Bigl(u,a,\frac{i}{m}\Bigr)u.$$
In this case, denote the lift of $\deltahat_0^{(m)}$ by the
simpler notation $\delta_a^{(m)}(u) =
\delta_{a,\frac{i}{m}}^{(m)}(u)$.

In order to study the periodic orbits of $\Pmap(u)$, we are going
to look at the difference $P^m_{a,\frac{i}{m}}(u)-u$. Since
$\bigl(d J^{({i}/{m})} +
\omega^{({i}/{m})}\bigl)|_{\delta_{a}^{(m)}(u)} = 0,$ it can be
concluded that

\begin{align*}
&\int_{\delta_{a}^{(m)}(u)} \bigl(d J^{({i}/{m})} + a
\omega^{({i}/{m})}\bigr)=0 \,\,\,\, \text{and hence} \\
&\int_{\delta_{a}^{(m)}(u)} d J^{({i}/{m})} = - a
\int_{\delta_{a}^{(m)}(u)}\omega^{({i}/{m})}.
\end{align*}
The one-form $d J^{({i}/{m})}$ is exact and yields
\begin{align} \label{Formula Integral Equalities}
\nonumber P^m_{a,i/m}(u)-u &= P^m_{a,i/m}(u)e^{-2 \pi}-ue^{0}\\
\nonumber &= J^{({i}/{m})}(2 \pi m,u) - J^{({i}/{m})}(0,u) \\
          &= \int_{\delta_{a}^{(m)}(u)} d J^{({i}/{m})} \\
\nonumber &= - a \int_{\delta_{a}^{(m)}(u)}\omega^{({i}/{m})}.
\end{align} Notice that when $a=0$ the paths take the special explicit form
\begin{equation} \label{curve formula} \delta^{(m)}_{0}(u) =
\delta^{(m)}_{0,\frac{i}{m}}(u)=\{(t,e^{\frac{i}{m}t}u)\, | \, t
\in [0,2 \pi m]\} \end{equation} with endpoints $(0,u)$ and $(2
\pi m, u)$. Divide equation (\ref{Formula Integral Equalities}) by
$a$. When $a \to 0$ the limit of the left hand side of
(\ref{Formula Integral Equalities}) is $I_{(m)}(u,i/m)u$.
Moreover, $\delta_{a}^{(m)}(u) \to \delta_{0}^{(m)}(u)$ as $a \to
0$. As a result, we can conclude that
$$I_{(m)}(u,i/m)u = - \int_{\delta_{0}^{(m)}(u)}\omega^{({i}/{m})}.$$
Now, remembering that $\delta_{0}^{(m)}(u)$ is of the form
(\ref{curve formula}) compute
\begin{eqnarray*} \label{}
I_{(m)}(u,i/m)u &=& - \int_{\delta_{0}^{(m)}(u)}
\frac{e^{-\frac{i}{m}z}
\sin{z}}{\sqrt{1-w}}\, d w \\
&=& - \int_{0}^{2 \pi m} \frac{e^{-\frac{i}{m}t}
\sin{t}}{\sqrt{1-u e^{\frac{i}{m}t}}}\,\Bigl( \frac{i}{m} u e ^{\frac{i}{m}t} \Bigr) dt \\
&=& - \frac{i u}{m} \int_{0}^{2 \pi m} \frac{\sin{t}}{\sqrt{1-u
e^{\frac{i}{m}t}}} \, dt.
\end{eqnarray*}
Since both sides of the equation are divisible by $u,$
\begin{equation} \label{first approximation formula one}
I_{(m)}(u,i/m) = - \frac{i}{m} \int_{0}^{2 \pi m}
\frac{\sin{t}}{\sqrt{1-u e^{\frac{i}{m}t}}} \, dt
\end{equation}
To compute the integral in (\ref{first approximation formula
one}), notice that $1/\sqrt{1 - w}$ is well defined and
holomorphic in the disc $D_{r_1}(0) \not\ni 1$ so it expands as
uniformly convergent series
$$(1 - w)^{-\frac{1}{2}} = \sum_{k=0}^{\infty} b_k w^k ,$$ where $b_k
= (-1)^k \frac{-\frac{1}{2}\bigl(-\frac{1}{2} - 1
\bigr)\bigl(-\frac{1}{2} - 2\bigr)...\bigl(-\frac{1}{2} -
(k-1)\bigr)}{k !} \neq 0.$ Thus,
\begin{eqnarray} \label{eqn 1}
\nonumber \int_{0}^{2 \pi m} \frac{\sin{t}}{\sqrt{1-u
e^{\frac{i}{m}t}}} \, dt &=& \int_{0}^{2 \pi m} \left(
\sum_{k=0}^{\infty} b_k e^{i
\frac{k}{m} t} u^k \right) \sin{t} \, dt \\
&=& \sum_{k=0}^{\infty} b_k \left(\int_{0}^{2 \pi m} e^{i
\frac{k}{m} t} \sin{t} \, dt \right) u^k.
\end{eqnarray}
The value of the integral depends on the coefficients of (\ref{eqn
1}) which, in their own turn, depend on the integral
\begin{eqnarray*}
\int_{0}^{2 \pi m} e^{i \frac{k}{m} t} \sin{t} \, dt &=&
\frac{1}{2i} \int_{0}^{2 \pi m} e^{i \frac{k}{m} t}(e^{i t} - e^{-
i t}) \, dt \\
&=& \frac{1}{2i} \int_{0}^{2 \pi m} \bigl(e^{i\frac{k + m}{m} t} -
e^{i\frac{k - m}{m} t} \bigr) \, dt
\end{eqnarray*}
When $k \neq m$ the primitive of the function $\bigl( e^{i\frac{k
+ m}{m} t} - e^{i\frac{k - m}{m} t} \bigr)$ under the integral is
again $2 \pi m$-periodic, leading to the conclusion that the
integral is zero. When $k=m$ the integral becomes
\begin{eqnarray*}
\int_{0}^{2 \pi m} e^{i t} \sin{t} \, dt &=& \frac{1}{2i}
\int_{0}^{2 \pi m} e^{i t}(e^{i t} - e^{-
i t}) \, dt \\
&=& \frac{1}{2i} \int_{0}^{2 \pi m} \bigl(e^{i 2 t} - 1 \bigr) \,
dt \\
&=& \frac{1}{(2i)^2} \Bigl( e^{2 i t} \Bigr)_{0}^{2 \pi m} -
\frac{\pi m}{i} \\
&=& i \pi m
\end{eqnarray*}
The computations above lead to
\begin{align*}
I_{(m)}\bigl(u,\frac{i}{m}\bigr) = - \frac{i}{m} \, b_m \, i \pi m
\,\, u^m = \pi b_m \, u^m.
\end{align*}
Finally, setting $c_m = \pi b_m \neq 0,$ we can conclude that for
$\e=\frac{i}{m}$,  the Poincar\'e map takes the form
\begin{equation} \label{Equation Pmap with computed first melnikov function}
P_{a,\frac{i}{m}}^m(u) = u + a\, c_m u^{m+1} + a^2
G_{(m)}\bigl(u,a,{i}/{m}\bigr)u.
\end{equation}

\section{Existence of multi-fold cycles}
\label{Section_Proof_of_Point_1}

In this section we show how multi-fold limit cycles bifurcate from
the cycle $(\Delta_0, q_0)$, located on the leaf $S_1 =
H^{-1}(1)$. Remember that $\delta_0 \subset S_1$ is the unit
circle in the real plane $\RR \subset \CC$ centered at the origin.
The set $\Delta_0$ is the element of the fundamental group of
$S_1$ determined by the loop $\delta_0$ with a base point $q_0 =
(1,0) \in S_1$.

From the discussion in the introduction, the existence of a
multi-fold limit cycle of $\Fol$ follows from the existence of an
isolated $m$-periodic orbit of the Poincar\'e transformation
$\Pmap.$ We can see from the construction of the map that a
representative of the cycle will be contained in the tubular
neighborhood $N(\delta_0) \subset E$ and therefore free homotopic
to $\delta_0^m$ in it. This fact immediately implies that the
limit cycle will be $m$-fold vertical. Therefore, all we need to
show is that $\Pmap$ has an isolated $m$-periodic orbit.

We fix the radii $r_1 > 0, r_2>0$ and $\bar{r}_3>0$ so that for
any $(a,\e) \in D_{r_2}(0) \times D_{\bar{r}_3}(0)$ the map
$\Pmap\, : \, D_{r_1}(0) \longrightarrow \cc$ is well defined. Let
$m>0$ be such that $i/m \in D_{\bar{r}_3}(0).$

\begin{lemma} \label{Lemma Example Periodic Orbit Existence}
There exists $\e_m$ near $\frac{i}{m}$ and a parameter $a_m$ such
that for all $\e$ in a neighborhood of $\e_m,$ the map $\Pmap$ has
an isolated periodic orbit of period $m$.
\end{lemma}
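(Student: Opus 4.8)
The plan is to convert the period-$m$ equation $P^{m}_{a,\e}(u)=u$ into the vanishing of a single holomorphic function, to split off the trivial fixed point $u=0$, and then to run the implicit function theorem \emph{anchored at a nonzero value of $u$} in order to sweep out an entire $\e$-family of genuine $m$-periodic orbits for one well-chosen $a=a_m$.

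Introduce
$$\Psi(u,a,\e)\ :=\ \frac{P^{m}_{a,\e}(u)-u}{u}\ =\ \bigl(e^{2m\pi\e}-1\bigr)+a\,I_{(m)}(u,\e)+a^{2}G_{(m)}(u,a,\e),$$
the second equality being legitimate since every term in the formula for $P^{m}_{a,\e}$ carries a factor $u$; thus $\Psi$ is holomorphic near $(u,a,\e)=(0,0,\tfrac{i}{m})$, and for $u\ne 0$ in $D_{r_1}(0)$ the point $u$ is a fixed point of $P^{m}_{a,\e}$ exactly when $\Psi(u,a,\e)=0$. Put $\beta(\e)=e^{2m\pi\e}-1$, so $\beta(\tfrac{i}{m})=0$ and $\beta'(\tfrac{i}{m})=2m\pi\ne 0$, and recall from (\ref{Equation Pmap with computed first melnikov function}) that $I_{(m)}(u,\tfrac{i}{m})=c_m u^{m}$ with $c_m\ne 0$. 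I would also record that for $|a|$ small and $\e$ near $\tfrac{i}{m}$ the only fixed point of $P^{d}_{a,\e}$ in a fixed small disc about $0$ is $u=0$, for every proper divisor $d\mid m$: indeed $\bigl(P^{d}_{a,\e}(u)-u\bigr)/u=e^{2d\pi\e}-1+O(a)+O(u)$ is bounded away from $0$ there since $e^{2\pi i d/m}\ne 1$. Consequently every nonzero fixed point of $P^{m}_{a,\e}$ sufficiently close to $0$ has $P_{a,\e}$-orbit of size exactly $m$ --- so it is a true $m$-periodic orbit --- and it is automatically isolated among the fixed points of $P^{m}_{a,\e}$, because $\Psi(\cdot,a,\e)$ is holomorphic and not identically zero ($P^{m}_{a,\e}$ not being the identity).

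It remains to exhibit, for a suitable $a_m$, a nonzero zero of $\Psi(\cdot,a_m,\e)$ varying over a full neighborhood of some $\e_m$ near $\tfrac{i}{m}$. Fix a small $\bar u\ne 0$ and then a small $a_m\ne 0$. Since $\Psi(\bar u,a_m,\tfrac{i}{m})=a_m\bigl(c_m\bar u^{m}+a_m G_{(m)}(\bar u,a_m,\tfrac{i}{m})\bigr)$ can be made as small as we like while $\partial_\e\Psi(\bar u,a_m,\tfrac{i}{m})=2m\pi+O(a_m)\ne 0$, there is a unique $\e_m$ near $\tfrac{i}{m}$ with $\Psi(\bar u,a_m,\e_m)=0$; the implicit function theorem applied to $\Psi(u,a_m,\,\cdot\,)$ near $(\bar u,\e_m)$ then yields a holomorphic $\e=\e(u)$ with $\e(\bar u)=\e_m$ and $\Psi(u,a_m,\e(u))\equiv 0$. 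The essential point is that $\e'(\bar u)=-\,\partial_u\Psi/\partial_\e\Psi\ne 0$, because $\partial_u\Psi(\bar u,a_m,\e_m)=a_m m c_m\bar u^{\,m-1}+O(a_m^{2})\ne 0$ once $\bar u\ne 0$ and $a_m$ is small. Hence $u\mapsto\e(u)$ is a local biholomorphism near $\bar u$; inverting it gives a holomorphic $u=\psi(\e)$ on a neighborhood of $\e_m$ with $\psi(\e_m)=\bar u\ne 0$ and $\Psi(\psi(\e),a_m,\e)\equiv 0$. Choosing $\bar u$ --- hence $a_m$ and the $\e$-neighborhood --- small enough that $\psi(\e)$ together with its $P_{a_m,\e}$-orbit stays in $D_{r_1}(0)$, with $a_m\in D_{r_2}(0)$ and $\e\in D_{\bar r_3}(0)$, the previous paragraph shows that $\{\psi(\e),\,P_{a_m,\e}(\psi(\e)),\dots,\,P^{m-1}_{a_m,\e}(\psi(\e))\}$ is an isolated $m$-periodic orbit of $P_{a_m,\e}$, which is precisely the assertion.

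The delicate ingredients are the ``period exactly $m$'' bookkeeping --- the divisor argument above, together with verifying that the whole orbit stays inside the domain --- and, above all, the inversion step. The latter is where anchoring at a nonzero $\bar u$ is decisive: it is what forces $\partial_u\Psi$, and therefore $\e'(\bar u)$, to be nonzero. Freezing $\e=\tfrac{i}{m}$ and merely counting the zeros of $\Psi(\cdot,a,\tfrac{i}{m})$ in $u$ by Rouch\'e produces $m$ zeros but offers no control over whether some of them collide with the trivial fixed point $u=0$ in degenerate cases (for instance if $G_{(m)}(0,\,\cdot\,,\tfrac{i}{m})$ vanishes identically), so a more robust argument of the above kind seems to be needed.
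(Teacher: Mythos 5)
Your argument is correct, but it departs from the paper's in an interesting way. The paper works with the same divided equation $g(u,a,\e)=\Psi(u,a,\e)/a$ and proceeds in four steps: two applications of Rouch\'e's theorem (first at $\e=\tfrac{i}{m}$ to count $m$ roots of $g(\cdot,a,\tfrac{i}{m})$ in $D_{r_1}(0)$ using the dominance of $c_m u^m$, then to continue those $m$ roots to $\e$ near $\tfrac{i}{m}$), an analogue of your divisor observation (the paper checks all $k=1,\dots,m-1$, not just proper divisors), and crucially a fourth step in which the implicit function theorem is applied to $g_0(a,\e)=\Psi(0,a,\e)$ to describe its zero locus $Z$ and to pick $(a_m,\e_m)$ \emph{off} $Z$, thereby forcing $u=0$ to be a simple root of $P^m_{a_m,\e_m}(u)-u$ so that the $m$ Rouch\'e roots are all nonzero and form a single genuine $m$-orbit. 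You short-circuit this: instead of counting all $m$ roots and then arguing them away from $0$, you anchor the implicit function theorem directly at a nonzero $\bar u$, obtain $\e_m$ with $\Psi(\bar u,a_m,\e_m)=0$, and then use $\partial_u\Psi(\bar u,a_m,\e_m)=a_m m c_m\bar u^{m-1}+O(a_m^2)\neq 0$ to invert the relation $\e=\e(u)$ and produce the branch $u=\psi(\e)$ of nonzero fixed points of $P^m_{a_m,\e}$ over a full $\e$-disc. Your route avoids Rouch\'e entirely and is somewhat leaner; it also has the merit of making the ``for all $\e$ near $\e_m$'' part of the statement explicit (via $\psi$) rather than implicit via openness of the parameter set. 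What you lose relative to the paper is the inventory of all $m$ roots in $D_{r_1}(0)$; you find one and propagate it along the orbit, which suffices for the lemma. Two small points worth tightening: your parenthetical ``$P^m_{a,\e}$ not being the identity'' is an assertion, but it follows immediately from $\partial_u\Psi(\psi(\e),a_m,\e)\neq 0$, which shows $\psi(\e)$ is a simple (hence isolated) zero; and you should say a word, as you gesture at, that the whole $P_{a_m,\e}$-orbit of $\psi(\e)$ stays inside $D_{r_1}(0)$ so that the iterates are defined and the divisor argument applies to each iterate. Your closing critique of the bare Rouch\'e count --- that it does not by itself exclude collision with $u=0$ --- is exactly right, and it is precisely the gap the paper closes with its Fact 4; your IFT-at-$\bar u$ device is an alternative, perhaps more robust, repair.
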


\begin{proof}
The verification of the claim depends on four facts. Putting them
together will help us determine the values of the parameters $a$
and $\e.$ In order to find a periodic orbit for the map
$\Pmap(u),$ we are going to look at the equation
\begin{equation}\label{general equation}
\Pmap^m(u) - u=0.
\end{equation} Whenever $a \neq 0$ we can rewrite (\ref{general equation}) in the form
\begin{equation*}
\frac{e^{2 \pi m \e} - 1}{a} \,\, u +  I_{(m)}(u,\e) u + a \,
G_{(m)}(u,a,\e) u = 0.
\end{equation*}
Furthermore, having in mind that $u=0$ is always a solution of
(\ref{general equation}), we can divide by $u$ and obtain
\begin{equation} \label{Eq m a epsilon}
g(u,a,\e) = \frac{e^{2 \pi m \e}-1}{a} + I_{(m)}(u,\e) + a \,
G_{(m)}(u,a,\e)=0
\end{equation}
for $u \in D_{r_1}(0), a \in D_{r_2}(0) - \{0\}$ and $\e \in
D_{\bar{r}_3}(0).$ \vspace{3mm}

\paragraph{\bf Fact 1.} Let us focus on the equation
\begin{equation} \label{Eq m a i/m}
g\Bigl(u,a,\frac{i}{m}\Bigr) = I_{(m)}\Bigl(u,\frac{i}{m}\Bigr) +
a G_{(m)}\Bigl(u,a,\frac{i}{m}\Bigr)=0
\end{equation}
If necessary, decrease the radius $r_2>0$ enough so that if we set
\begin{equation*}
\mathcal{M}(r_1,r_2) = \max{\left\{|a|
\left|G_{(m)}\Bigl(u,a,\frac{i}{m}\Bigr)\right| \, \, : \, \,
|u|=r_1 \, \, \text{and} \, \, a \in D_{r_2}(0)\right\}}
\end{equation*}
then $\mathcal{M}(r_1,r_2) < |c| \, r_1^m.$ Since
$I_{(m)}\Bigl(u,\frac{i}{m}\Bigr) = c_m u^m$, it follows that for
$|u|=r_1$ and for any $a \in D_{r_2}(0)$
$$|c_m|\, |u|^m = |c_m| \, r_1^m > \mathcal{M}(r_1,r_2) \geq  |a|
\left|G_{(m)}\Bigl(u,a,\frac{i}{m}\Bigr)\right|,$$
 so by Rouche's theorem \cite{Smth}, equation (\ref{Eq m a i/m}) has
 exactly $k$ zeroes $u_1(a)$, $u_2(a),$...,$u_m(a)$ in $D_{r_1}(0)$, counted with
 multiplicities. \vspace{3mm}

\paragraph{\bf Fact 2.} Let $\mu(\e) = \min{ \{ |e^{2 \pi k \e} - 1| \,\, : \,\, 1 \leq k \leq m-1
 \}}$. Regarded as a function, $\mu(\e)$ is continuous and
 $\mu(i/m) > 0.$ Hence, there exists $r_3>0$, such that
 $\overline{D_{r_3}(i/m)} \subset D_{\bar{r}_3}(0)$. Moreover, there exists a constant $\mu > 0,$ such that
 $\mu(\e)>\mu$ for any $\e \in D_{r_3}(i/m)$. If needed,
 decrease $r_2>0$ so that
\begin{equation*} \label{Fact 2 inequality}
 \max{\bigl\{ \, |a| \, \bigl|I_{(k)}(u,\e) + a G_{(k)}(u,a,\e)\bigr| \,\, : \,\, 1 \leq k \leq m-1 \bigl\}}  <  \mu
\end{equation*}
 for all $u \in D_{r_1}(0), a \in D_{r_2}(0)$ and $\e \in
 D_{r_3}(i/m)$. \vspace{3mm}

 \paragraph{\bf Fact 3.} Equation (\ref{Eq m a epsilon}) can take
the form
\begin{eqnarray} \label{Eq m a epsilon rewritten}
g(u,a,\e) = g\Bigl(u,a,\frac{i}{m}\Bigr) + \Bigl(g(u,a,\e) -
g\Bigl(u,a,\frac{i}{m}\Bigr)\Bigr) = 0
\end{eqnarray}
For any fixed $a \in D_{r_2}(0)-\{0\}$, fact 1 reveals that
whenever $|u|=r_1,$ the following inequalities hold:
\begin{equation*}
\begin{split}
\left|g\Bigl(u,a,\frac{i}{m}\Bigr)\right| \geq
\left|I_{(m)}\Bigl(u,\frac{i}{m}\Bigr)\right| - |a| \,
\left|G_{(m)}\Bigl(u,a,\frac{i}{m}\Bigr)\right| > 0.
\end{split}
\end{equation*}
Hence, $\mu_1(a)= \min{\left\{
\left|g\Bigl(u,a,\frac{i}{m}\Bigr)\right| \, : \, |u|=r_1
\right\}}
> 0$
Notice, that for any nonzero $a \in D_{r_2}(0)$ one can find a
radius $r_3(a) > 0,$ continuously depending on $a$, such that
\begin{align*} \label{Large inequatity fact 3}
\max{\left\{\left| g(u,a,\e) - g\Bigl(u,a,\frac{i}{m}\Bigr)\right|
\, : \,  |u|=r_1, \,\,\, \e \in
D_{r_3(a)}\bigl(i/m\bigr)\right\}}< \mu_1(a),
\end{align*}
Because of the last inequality, it follows by Rouche's theorem
that equation (\ref{Eq m a epsilon}) has as many solutions as
equation (\ref{Eq m a i/m}). Thus, due to fact 1, (\ref{Eq m a
epsilon}) has exactly $m$ solutions $u_1(a,\e),...,u_m(a,\e)$,
counted with multiplicities. If we set $$W = \bigsqcup\limits_{0
\neq a \in D_{r_2}(0)} \,\Bigl( \{a\} \times
D_{r_3(a)}(i/m)\Bigr),$$ then $W$ is open and $\overline{W} \ni
(0,\frac{i}{m}).$\vspace{3mm}

\paragraph{\bf Fact 4.} Let $g_0(a,\e) = (e^{2 \pi m \e} - 1) + a
\, I_{(m)}(0,\e) + a^2 \, G_{(m)}(0,a,\e)$. Notice, that
$g_0\bigl(0, \frac{i}{m}\bigr) = 0 \,\,\, \text{and} \,\,\,
\frac{\partial g_0}{\partial \e}\bigl(0, \frac{i}{m}\bigr) = 2 \pi
m \neq 0.$ Hence, by the implicit function theorem, it follows
that for a possibly decreased $r_2>0$ there exists a holomorphic
function $\chi : D_{r_2}(0) \to D_{r_3}\bigl(\frac{i}{m}\bigr)$
such that $\chi(0) = \frac{i}{m}$ and $g_0(a,\chi(a))=0$ for all
$a \in D_{r_2}(0).$ From here, we can see that the zero locus of
$g_0$ inside the product domain $D_{r_2}(0) \times
D_{r_3}\bigl(\frac{i}{m}\bigr)$ is
$$Z = \{(a,\e) \,\, : \,\, g_0(a,\e)=0\} =
\{(a,\chi(a)) \,\, : \,\, a \in D_{r_2}(0)\}.$$ The set $Z$ is
relatively closed in $D_{r_2}(0) \times
D_{r_3}\bigl(\frac{i}{m}\bigr)$ so its complement
$\bigl(D_{r_2}(0) \times D_{r_3}\bigl(\frac{i}{m}\bigr)\bigl) - Z$
is open and nonempty. Therefore, $W \cap \Bigl[\bigl(D_{r_2}(0)
\times D_{r_3}\bigl(\frac{i}{m}\bigr)\bigl) - Z \Bigr] \neq
\varnothing$ is open as well. \vspace{3mm}

Now we are ready to complete the proof of the lemma. Let
$(a_m,\e_m) \in W \cap \Bigl[\bigl(D_{r_2}(0) \times
D_{r_3}\bigl(\frac{i}{m}\bigr)\bigl) - Z \Bigr]$. Apply the
results from fact 4 to obtain
\begin{align*}
  g_0(a_m,\e_m) = (e^{2 \pi m \e_m} - 1) &+ a_m \, I_{(m)}(0,\e_m) +\\
                                         &+ a^2_m \, G_{(m)}(0,a_m,\e_m) \neq 0.
\end{align*}
Hence, the equation
\begin{align*}
  P_{\e_m,a_m}^m(u) - u = (e^{2\pi m \e_m} - 1) \, u
  &+ a_m \,I_{(m)}(u,\e_m)u +\\
  &+ a^2_m \, G_{(m)}(u,a_m,\e_m)u = 0
\end{align*}
has $u_0=0$ as a simple root.

Since $(a_m,\e_m) \in W$, it follows from fact 3 that whenever
$|u|=r_1$ the following inequality holds
\begin{align*}
\left|g\Bigl(u,a_m,\frac{i}{m}\Bigr)\right| \geq \mu_1(a_m)
> \left| g(u,a_m,\e_m) - g\Bigl(u,a_m,\frac{i}{m}\Bigr)\right|
\end{align*}
Therefore, by Rouche's theorem, the equation
\begin{equation} \label{equation a_m eps}
  \begin{split}
   a_m \, g(u,a_m,\e_m) = (e^{2 \pi m \e_m} - 1)
   &+ a_m \, I_{(m)}(u,\e_m) + \\
   &+ a^2_m \, G_{(m)}(u,a_m,\e_m) = 0
   \end{split}
\end{equation}
has as many solutions as
\begin{equation} \label{equation a_m im}
a_m \, g\Bigl(u,a_m,\frac{i}{m}\Bigr) = a_m \,
I_{(m)}\Bigl(u,\frac{i}{m}\Bigr) + a_m^2 \,
G_{(m)}\Bigl(u,a_m,\frac{i}{m}\Bigr) = 0.
\end{equation}
By fact 1, equation (\ref{equation a_m im}) has $m$ roots
$u_1(a_m),...,u_m(a_m)$ contained in $D_{r_1}(0)$. For that
reason, equation (\ref{equation a_m eps}) has $m$ solutions
contained in $D_{r_1}(0).$ Let us denote them by $u_1(a_m,\e_m),$
...,$u_m(a_m,\e_m).$ As it was established earlier, none of them
is zero. For simplicity, let $u_j = u_j(a_m,\e_m),$ where $j =
1,..,m.$

By fact 2, for $1 \leq k \leq m-1$ and for $u \in D_{r_1}(0)$,
$$|e^{2 \pi k \e_m} - 1| \geq \mu(a_m) > \mu > |a_m| \, \bigl|I_{(k)}(u,\e_m) + a_m G_{(k)}(u,a_m,\e_m)\bigr|.$$
Having in mind that $u_j \in D_{r_0}(0)$ and each of them is
nonzero for $j=1,..,m$, we estimate
\begin{align*}
\bigl|P_{a_m,\e_m}^k(u_j)-u_j\bigr| =& |u_j| \, \bigl|(e^{2 \pi k
\e_m} - 1) + a_m \, I_{(k)}(u_j,\e_m) \\
&+ a^2_m \, G_{(k)}(u_j,a_m,\e_m)\bigr| \geq |u_j| \, \bigl( |e^{2
\pi k \e_m} - 1| \\ &- |a_m| \, \bigl|I_{(k)}(u_j,\e_m) + a^2_m \,
G_{(k)}(u_j,a_m,\e_m)\bigr|\bigr) > 0.
\end{align*}
For that reason, $P^k_{a_m,\e_m}(u_j) \neq u_j$ for $1 \leq k \leq
m-1$. Hence, the orbit $u_1$,...,$u_m$ consists of different
points and therefore is periodic of period $m$ in $D_{r_1}(0).$
\end{proof}

\section{Topology of the fiber bundle} \label{Section_Topology_of_Bundle}

Some of the constructions we would need in order to complete the
proof of theorem \ref{The_Main_Theorem} depend on the topology of
the domain $E \subset \CC$. That is why our goal is to understand
it well.

We begin with the introduction of some useful notations. By
$[\delta]_M$ we denote the set of all loops homotopic to a loop
$\delta$ on a manifold $M$, all passing through a base point $x_0
\in M$. As usual, the homotopy class $[\delta]_M$ is an element of
the fundamental group $\pi_1(M,x_0)$ of $M$. Our first step is to
compute the fundamental group of the domain $E$.

To make our arguments more standard, we introduce new coordinates
in $\CC$. Let $z = \frac{1}{\sqrt{2}}(x + iy)$ and $w =
\frac{1}{\sqrt{2}}(x - iy)$. This is a unitary linear
transformation of $\CC$ and it preserves the standard Hermitian
dot product $z_1\overline{z}_2 + w_1 \overline{w}_2$. Therefore,
the change of variables is an isometry and preserves all the
metric properties of $\CC$. With respect to these coordinates $H =
x^2 + y^2 = zw$ and $E = \{(z,w) \in \CC\ \,\, | \,\, z w \neq
0\}$. Also, remember the unit circle $\delta_0 = \{(\cos(2 \pi t),
\sin(2 \pi t)) \,\, | \,\, t \in [0, 1]\}$ in $x,y$-coordinates.
In $z,w$-coordinates, it takes the form $\delta_0 =
\{(\frac{1}{\sqrt{2}} e^{2 \pi t}, \frac{1}{\sqrt{2}} e^{- 2 \pi
t}) \,\, | \,\, t \in [0, 1]\}$.



\begin{lemma} \label{Lemma_defomrmation_retract_onto_torus}
The open domain $E = \{(z,w) \in \CC\ \,\, | \,\, z \neq 0 \,\,
\text{and} \,\, w \neq 0\}$ deformation retracts onto the embedded
torus $\Torus = \{(z,w) \in \CC \,\, | \,\,\, |z| = |w| =
\frac{1}{\sqrt{2}} \}$. Moreover, the circle $\delta_0$ lies on
$\Torus$ and is not null-homotopic on it.
\end{lemma}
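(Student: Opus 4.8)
The plan is to exhibit an explicit deformation retraction of $E = \{(z,w) : z \neq 0, w \neq 0\}$ onto the torus $\Torus = \{|z| = |w| = \tfrac{1}{\sqrt 2}\}$. The key observation is that $E$ splits as a product: in polar-type coordinates $z = r_1 e^{i\theta_1}$, $w = r_2 e^{i\theta_2}$ with $r_1, r_2 > 0$, the domain $E$ is diffeomorphic to $(0,\infty)^2 \times (S^1)^2$, while $\Torus$ sits inside it as $\{(\tfrac{1}{\sqrt 2}, \tfrac{1}{\sqrt 2})\} \times (S^1)^2$. So it suffices to retract the first factor $(0,\infty)^2$ onto the single point $(\tfrac{1}{\sqrt 2}, \tfrac{1}{\sqrt 2})$, leaving the angular coordinates untouched. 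Since $(0,\infty)$ is contractible (indeed $(0,\infty)^2$ is convex after taking logarithms), the straight-line homotopy $H_s(r_1,r_2) = \bigl((1-s) r_1 + s\tfrac{1}{\sqrt 2},\, (1-s) r_2 + s\tfrac{1}{\sqrt 2}\bigr)$ does the job and stays inside $(0,\infty)^2$. Concretely, define
\[
F : E \times [0,1] \to E, \qquad F\bigl((z,w),s\bigr) = \left(\frac{(1-s)|z| + s/\sqrt{2}}{|z|}\, z,\ \frac{(1-s)|w| + s/\sqrt{2}}{|w|}\, w\right).
\]
One checks that $F$ is continuous (the denominators never vanish on $E$), that $F(\cdot,0) = \mathrm{id}_E$, that $F(\cdot,1)$ maps $E$ onto $\Torus$, and that $F((z,w),s) = (z,w)$ whenever $(z,w) \in \Torus$. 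Hence $F$ is a deformation retraction of $E$ onto $\Torus$.

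For the second part, I verify that $\delta_0$ lies on $\Torus$ and is noncontractible there. In $z,w$-coordinates $\delta_0 = \{(\tfrac{1}{\sqrt 2} e^{2\pi i t}, \tfrac{1}{\sqrt 2} e^{-2\pi i t}) : t \in [0,1]\}$, and every point of this loop manifestly satisfies $|z| = |w| = \tfrac{1}{\sqrt 2}$, so $\delta_0 \subset \Torus$. Identify $\Torus \cong S^1 \times S^1$ via $(z,w) \mapsto (\arg z, \arg w) = (\theta_1, \theta_2)$; then $\pi_1(\Torus) \cong \Integers^2$ with generators given by the coordinate circles. Under this identification $\delta_0$ traces out $t \mapsto (2\pi t, -2\pi t)$, i.e. it represents the class $(1,-1) \in \Integers^2$, which is nonzero. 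Therefore $\delta_0$ is not null-homotopic on $\Torus$.

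There is essentially no hard obstacle here; the only point requiring a little care is checking that the homotopy $F$ genuinely takes values in $E$ for all $s \in [0,1]$ — that is, that the rescaled radii $(1-s)|z| + s/\sqrt{2}$ and $(1-s)|w| + s/\sqrt{2}$ stay strictly positive — which is immediate since both $|z|, |w| > 0$ on $E$ and $s/\sqrt 2 \geq 0$, so the convex combination of two positive-or-nonnegative quantities with $|z|,|w|>0$ is positive. The identification of the homotopy class of $\delta_0$ in $\pi_1(\Torus)$ as $(1,-1)$ (rather than, say, $(1,1)$) is a consequence of the change of coordinates $z = \tfrac{1}{\sqrt 2}(x+iy)$, $w = \tfrac{1}{\sqrt 2}(x-iy)$, under which the standard counterclockwise unit circle $t \mapsto (\cos 2\pi t, \sin 2\pi t)$ becomes $t \mapsto (\tfrac{1}{\sqrt 2}e^{2\pi i t}, \tfrac{1}{\sqrt 2}e^{-2\pi i t})$; this will matter later when comparing with $\pi_1(E)$, but for the present lemma only the nontriviality is needed.
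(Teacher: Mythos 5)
Your proof is correct, and the explicit radial retraction you give is a genuinely different and in fact more elementary route than the one the paper takes. The paper first deformation retracts $\CC \setminus \{(0,0)\}$ onto the unit three-sphere $\ThreeSphere$, identifies $K = \ThreeSphere \cap (\{z=0\} \cup \{w=0\})$ as the Hopf link, observes that $E$ deformation retracts onto the link complement $M_K = \ThreeSphere \setminus K$, and then cites the classical fact that the complement of the Hopf link retracts onto the Clifford torus $\Torus$. This packages the argument in the language of the Hopf fibration and knot complements, which fits the broader geometric narrative of the paper but leans on quoted topology. You instead notice directly that the polar-coordinate splitting $E \cong (0,\infty)^2 \times (S^1)^2$ identifies $\Torus$ as a slice over a single point of the contractible factor $(0,\infty)^2$, so a convex (radial) homotopy in the moduli retracts $E$ onto $\Torus$; this is self-contained, and the verification that $F$ stays in $E$, restricts to the identity on $\Torus$, and lands on $\Torus$ at $s=1$ is immediate. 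Your treatment of the second claim coincides with the paper's: both compute the class of $\delta_0$ in $\pi_1(\Torus) \cong \Integers \oplus \Integers$ (equivalently $H_1(\Torus,\Integers)$) to be $(1,-1)$ after the unitary change of coordinates $z = \tfrac{1}{\sqrt 2}(x+iy)$, $w = \tfrac{1}{\sqrt 2}(x-iy)$, hence nontrivial. The only thing you lose relative to the paper is the explicit link to the Hopf fibration picture, which the paper does not actually reuse later, so nothing essential is missing.
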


\begin{proof}

Let $\ThreeSphere = \{(z,w) \in \CC \,\, | \,\,\, |z|^2 + |w|^2 =
1\}$ be the unit three-sphere in $\CC$ and $K = \ThreeSphere \cap
\{(z,w) \in \CC  \,\, | \,\, z=w=0\}$. Then $\Torus$ is embedded
in $\ThreeSphere$. The intersection of a complex line with
$\ThreeSphere$ is always a great circle, and more precisely, a
fiber of the Hopf bundle \cite{T}. Therefore, $K$ is the classical
Hopf link in the three-sphere, consisting of two great circles
linked once. Let $M_K = \ThreeSphere \setminus K$ be its
complement. As $\CC \setminus \{(0,0)\}$ deformation retracts onto
$\ThreeSphere$, we have that $E$ deformation retracts onto $M_K$.
In its own turn, $M_K$ deformation retracts onto the torus
$\Torus$, so we conclude that $E$ deformation retracts onto
$\Torus$.

For the second part of the lemma, notice that $\Torus =
\{(\frac{1}{\sqrt{2}} e^{2 \pi s_1}, \frac{1}{\sqrt{2}} e^{2 \pi
s_2}) \, |\, (s_1,s_2) \in [0, 1]^2\}$. From here, immediately
follows that $\delta_0$ lies on the surface of the torus. If we
take an $s_1$-circle and an $s_2$-circle on $\Torus$ as a homology
basis for $H_1(\Torus,\Integers)$ then $\delta_0$ has homology
coordinates $(1,-1)$ and therefore is not null-homologous. Hence,
it is not null-homotopic either.
\end{proof}

\begin{corollary} \label{Corollary_fundamental_group_E}
The fundamental group of $E $ is $\pi_1(E,q_0) = \Integers \oplus
\Integers$ and $[\delta_0]_E \neq 1$.
\end{corollary}

\begin{proof}
By lemma \ref{Lemma_defomrmation_retract_onto_torus}, the domain
$E$ deformation retracts onto the embedded torus $\Torus$ which
induces an isomorphism between the fundamental groups
$\pi_1(E,q_0)$ and $\pi_1(\Torus,q_0) = \Integers \oplus
\Integers$ \cite{H}. The circle $\delta_0$ is kept point-wise
fixed by the deformation retraction so $[\delta_0]_E$ gets mapped
to $[\delta_0]_{\Torus} \neq 1$. Hence $[\delta_0]_E \neq 1$.
\end{proof}

\begin{proposition} \label{proposition_m=k}
Let $\delta \subset E$ be an $m$-fold vertical representative of
an $m$-fold vertical cycle of $\Fol$. Assume that $\delta$ is also
free homotopic in $E$ to another loop $\delta'_0 \subset S_1$.
Then $\delta'_0$ is free homotopic on $S_1$ to $\delta_0^m$.
\end{proposition}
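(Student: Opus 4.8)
The plan is to pin down the free homotopy class of $\delta$ inside $E$ by working in the abelianization $H_1(E,\Integers) \cong \Integers \oplus \Integers$, using the torus $\Torus$ from Lemma \ref{Lemma_defomrmation_retract_onto_torus} as a concrete model. First I would fix the homology basis given by an $s_1$-circle and an $s_2$-circle on $\Torus$, so that, by the computation in that lemma, $[\delta_0]$ has homology coordinates $(1,-1)$, and hence $[\delta_0^m]$ has coordinates $(m,-m)$. Because $\delta$ is $m$-fold vertical, it is by definition free homotopic in $E$ to $\delta_0^m$, so $\delta$ also represents the class $(m,-m)$ in $H_1(E,\Integers)$.

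Next I would bring in the second hypothesis: $\delta$ is free homotopic in $E$ to a loop $\delta'_0 \subset S_1$. Since $S_1$ is the cylinder $\{zw = 1\}$ with $\pi_1 \cong \Integers$ generated (up to sign) by $\delta_0$, the loop $\delta'_0$ is free homotopic \emph{on $S_1$} to $\delta_0^k$ for some $k \in \Integers$; this is exactly the reduction already observed in the discussion preceding Proposition \ref{proposition_m=k}. Pushing this into $E$ via the inclusion $S_1 \hookrightarrow E$, the class of $\delta'_0$ in $H_1(E,\Integers)$ equals $k \cdot [\delta_0] = (k,-k)$. On the other hand $\delta$ is free homotopic in $E$ to $\delta'_0$, so their $H_1(E,\Integers)$ classes agree, giving $(m,-m) = (k,-k)$ in $\Integers \oplus \Integers$, whence $k = m$. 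Therefore $\delta'_0$ is free homotopic on $S_1$ to $\delta_0^m$, which is the claim.

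The only genuinely delicate point is the passage from free homotopy in $E$ to equality of classes in $H_1(E,\Integers)$ together with the identification of those classes with the coordinates coming from $\Torus$. For this I would use that free homotopy of loops corresponds to conjugacy of elements in $\pi_1(E,q_0)$, and that conjugate elements have the same image in the abelianization; since $\pi_1(E,q_0) \cong \Integers \oplus \Integers$ is itself abelian by Corollary \ref{Corollary_fundamental_group_E}, conjugacy is in fact equality and no information is lost. The deformation retraction $E \to \Torus$ of Lemma \ref{Lemma_defomrmation_retract_onto_torus} identifies $\pi_1(E,q_0)$ with $H_1(\Torus,\Integers)$ compatibly with the chosen basis, and it fixes $\delta_0$ pointwise, so the coordinate computation $[\delta_0] = (1,-1)$ transports verbatim to $E$. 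Assembling these identifications is routine once stated, so I expect no real obstacle beyond being careful that every homotopy invoked takes place inside $E$ (and, for $\delta'_0$ versus $\delta_0^k$, inside $S_1$), which both hypotheses guarantee.
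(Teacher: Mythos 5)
Your proposal is correct and follows essentially the same route as the paper: both reduce the statement to the observation that free-homotopic loops in $E$ give conjugate, hence (by commutativity of $\pi_1(E,q_0)\cong\Integers\oplus\Integers$) equal, elements of $\pi_1(E)$, and then use the nontriviality of $[\delta_0]_E$ together with torsion-freeness to force $m=k$. The only cosmetic difference is that you invoke the explicit homology coordinates $(1,-1)$ on the torus $\Torus$, whereas the paper argues abstractly from $[\delta_0]_E\neq 1$ and $\pi_1(E,q_0)$ being torsion-free; the two are interchangeable since the coordinate $(1,-1)$ is precisely how Lemma \ref{Lemma_defomrmation_retract_onto_torus} establishes those two facts.
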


\begin{proof}
Both $\delta_0^m$ and $\delta'_0$ belong to the cylinder $S_1$,
whose fundamental group is $\Integers$. That is why, the closed
curve $\delta'_0$ should be free homotopic on $S_1$ to
$\delta_0^k$, for some $k \in \Integers$. All we need to show is
that $m=k$. Indeed, consider the corresponding elements
$[\delta_0^m]_E = [\delta_0]_E^m$ and $[\delta_0^k]_E =
[\delta_0]_E^k$ form the fundamental group $\pi_1(E,q_0)$. Both
loops $\delta_0^k$ and $\delta_0^m$ are free-homotopic in $E$ to
the same loop $\delta$. Hence, they are free-homotopic to each
other in $E$. Therefore, the elements $[\delta_0]_E^m$ and
$[\delta_0]_E^k$ are conjugate in $\pi_1(E,q_0)$. Since, according
to corollary \ref{Corollary_fundamental_group_E}, the fundamental
group $\pi_1(E,q_0) = \Integers \oplus \Integers$ is Abelian,
$[\delta_0]_E^m = [\delta_0]_E^k$ which is equivalent to
$[\delta_0]_E^{m - k} = 1$ (here we use multiplicative notation).
By the same corollary, we can see that $[\delta_0]_E \neq 1$ and
$\pi_1(E,q_0) = \Integers \oplus \Integers$ is torsion-free.
Therefore $m=k$
\end{proof}

The map $H : E \to B$ defined by the polynomial $H=x^2+y^2$ is a
smooth, locally trivial fiber bundle. For any regular value $\h
\in B$, the fibers $S_{\h} = \{p \in \CC \,\, | \,\, H(p) = \h \}$
are topological cylinders, diffeomorphic to each other \cite{AGV},
\cite{IY}. We use $S_1$ as a model fiber. The map $\nu : \cc \to
S_1$ given by $\nu(\zeta) = (\cos \zeta, \sin \zeta)$, for $\zeta
\in \cc$, is the universal covering map of $S_1$. Define
$\hat{C}_0 = \{ \zeta \in \cc \,\, | \,\, \frac{1}{2} <
\textrm{Im}(\zeta) < \frac{3}{2}\}$. Then $C_0 = \nu(\hat{c}_0)
\subset S_1$ is a non-trivial cylinder on $S_1$ such that
$\delta_0 \cap \overline{C}_0 = \varnothing$.

Here are some facts about the topology of the fiber bundle $H : E
\to B.$ The unit circle $\gamma_0 = \{\h \in \cc \,\,|\,\,
|\h|=1\}$ is a simple closed loop in $B$ starting from $1$, going
around $0$ counterclockwise and coming back to $1$. The homotopy
class of $\gamma_0$ with a base point $1$ is the generator of the
fundamental group $\pi_1(B, 1) \cong \Integers$. For $\h \in
\gamma_0$ consider the fiber $S_{\h}$. Then, if the parameter $\h$
starts from $1$ and moves along the loop $\gamma_0$ until it comes
back to $1$ then the corresponding fiber $S_{\h}$ will also make
one turn around the critical value $0$ starting and ending up at
$S_1$. This procedure gives rise to an isotopy class of
diffeomorphisms with a representative $\tilde{D}_0 : S_1 \to S_1$
which is a Dehn twist. The map $\tilde{D}_0$ can be chosen so that
it twists the cylinder $C_0$ and is the identity on $S_1 \setminus
\overline{C}_0$ \cite{AGV}.

Let $\pi(z) = e^{2 \pi i z}$. Then $\pi : \cc \to B$ is the
universal covering map of the punctured plane $B$. Denote its
group of deck transformations by $\Gamma = \{\gamma_0^m \,\,|\,\,
m \in \Integers \,\, \text{and} \,\, \gamma_0(z) = z + 1\}$. As
usual, it is isomorphic to the fundamental group of $B$. The
vertical strip $\Band = \{z \in \cc \,\, | \,\, 0 \leq
\text{Re}(z) \leq 1\}$ is a closed fundamental domain. 
Also, whenever we have a cartesian product $M_1 \times M_2$ of two
sets, by $pr_{M_j}$ we are going to denote the projection
$pr_{M_j} : M_1 \times M_2 \to M_i$ where $pr_{M_j}(m_1,m_2)=m_j$
for $j=1,2.$

The next lemma shows that we can "unfold" the bundle $H : E \to B$
into a trivial covering bundle $pr_{\cc} : \cc \times S_1 \to
\cc$. Moreover, we can do so by making sure the deck group
$\Gammahat$ acts in a very special manner. It not only takes
vertical fibers $\{z\} \times S_1$ to vertical fibers, but what
really important is that it also takes horizontal fibers $\cc
\times \{p\}$ to horizontal fibers.

\begin{lemma} \label{lemma_bundle_topology}
There is a smooth covering map $\Pi : \CtimesS \to E$ with the
following properties: \vspace{1.5mm}

\noindent {\em 1.} If $pr_{\cc} : \cc \times S_1 \to \cc$ is the
projection $(z,p) \mapsto z$ then $H \circ \Pi =\pi \circ
pr_{\cc}$. In other words, apart from being a covering map, $\Pi$
is also a bundle map. \vspace{1.5mm}

\noindent {\em 2.} The deck group of $\Pi : \cc \times S_1 \to E$
is
$$\Gammahat = \langle \,\, (z,p) \mapsto (\gamma_0(z),
D_0(p)) \,\, \rangle,$$ where $\gamma_0(z) = z+1$ is the earlier
described generator of $\Gamma$ and the map $D_0=\tilde{D}_0^{-1}$
is the Dehn twist of the cylinder $c_0 \subset S_1$ and identity
everywhere else on the surface $S_1.$ Thus, the factor bundle
$(\cc \times S_1) / \hat{\Gamma}$ is
diffeomorphically isomorphic to the bundle $E.$ 
\end{lemma}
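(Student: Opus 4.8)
The plan is to build $\Pi$ explicitly by first unfolding the base $B$ via its universal cover $\pi : \cc \to B$, and then checking that the pulled-back bundle $\pi^*(H : E \to B)$ over $\cc$ is trivial, hence isomorphic to $\cc \times S_1$. First I would form the pullback bundle
\[
\hat E = \{(z,p) \in \cc \times E \,\, : \,\, \pi(z) = H(p)\},
\]
with projection $pr_{\cc} : \hat E \to \cc$. Since $\cc$ is contractible and the structure group of $H : E \to B$ is the mapping class group of the cylinder $S_1$, the bundle $pr_{\cc} : \hat E \to \cc$ admits a trivialization $\Phi : \cc \times S_1 \to \hat E$; composing with the projection $\hat E \to E$ gives a smooth covering map $\Pi : \cc \times S_1 \to E$, and by construction $H \circ \Pi = \pi \circ pr_{\cc}$, which is property~1. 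The trivialization $\Phi$ can be produced concretely: choose a smooth family of diffeomorphisms obtained by following the radial-then-angular path from $1$ to $\pi(z)$ in $B$ and using the local triviality of $H$ along it; this is the standard construction of the monodromy/Dehn twist $\tilde D_0$ recalled just before the lemma, and its output is exactly a bundle trivialization over the universal cover.

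Next I would identify the deck group. The deck group of $\pi : \cc \to B$ is $\Gamma = \langle \gamma_0 \rangle$ with $\gamma_0(z) = z+1$, so the deck group of $\Pi$ sits in an extension that covers $\Gamma$: every deck transformation has the form $(z,p) \mapsto (z+1, \Psi(z)(p))$ for some smooth family $\Psi(z)$ of diffeomorphisms of $S_1$. Comparing the two trivializations $\Phi$ and $\Phi(\cdot + 1, \cdot)$ of the same bundle $\hat E$ forces $\Psi(z)$ to be (isotopic to, and by a careful choice of $\Phi$ exactly equal to) the monodromy diffeomorphism of the loop $\gamma_0$, which is the Dehn twist. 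Choosing the trivialization so that the monodromy is realized by the model Dehn twist $D_0 = \tilde D_0^{-1}$ supported on the cylinder $C_0$ and equal to the identity on $S_1 \setminus \overline{C}_0$ gives
\[
\Gammahat = \langle \,\, (z,p) \mapsto (\gamma_0(z), D_0(p)) \,\, \rangle,
\]
which is property~2; the identification $(\cc \times S_1)/\Gammahat \cong E$ is then the general fact that a covering space modulo its deck group recovers the base.

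The main obstacle is the \emph{rigidity} of the monodromy: showing not merely that $\Psi(z)$ is isotopic to a Dehn twist, but that the trivialization $\Phi$ can be arranged so that the generator of $\Gammahat$ acts by a diffeomorphism that is \emph{literally} the identity off $\overline{C}_0$ — this is what makes $\Gammahat$ send horizontal fibers $\cc \times \{p\}$ to horizontal fibers, the feature emphasized before the lemma and needed later. To handle this I would invoke the fact that the mapping class group of the cylinder is generated by a single Dehn twist and that any diffeomorphism isotopic to $D_0$ can be conjugated, by an isotopy, to the standard model supported in $C_0$; absorbing this conjugating isotopy into the choice of $\Phi$ (which is free to be modified by any $\cc$-family of diffeomorphisms of $S_1$) yields the normalized form. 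The remaining verifications — that $\Pi$ is a covering map of the stated degree, that $pr_{\cc}$ intertwines correctly, and that $\Gammahat$ is exactly the deck group rather than a proper subgroup — are routine once the trivialization is pinned down.
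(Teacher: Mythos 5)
Your proposal is correct and follows essentially the same route as the paper: pull back $H : E \to B$ along $\pi : \cc \to B$, trivialize the pullback over the contractible base $\cc$, identify the deck group as a lift of $\Gamma$ acting by $(z,p)\mapsto(\gamma_0(z),\Psi(z)(p))$, and then normalize $\Psi$ to the model Dehn twist $D_0$ by adjusting the trivialization. The only place the paper is more explicit is the normalization step — instead of invoking ``absorbing the conjugating isotopy into $\Phi$'' in one sweep, it cuts along a thin strip $N_\epsilon$, views the deck generator restricted there as a gluing map $\phi_0$, and builds a two-stage isotopy of gluing maps (first killing the $z$-dependence of $\psi$, then deforming $\psi(0,\cdot)$ to $D_0$), using that isotopic gluing maps yield isomorphic bundles; your argument would need this same $\cc$-family $h_z$ with $h_{z+1}^{-1}\circ\Psi(z)\circ h_z = D_0$ spelled out, and the gluing-map picture is precisely how the paper solves that equivariant conjugation problem.
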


\begin{proof}
Consider the pullback of the bundle $H : E \to B$ over the plane
$\cc$ under the covering map $\pi$. To carry out this
construction, first define the total space $\pi^{*}E=\{(z,q) \in
\cc \times E \,\, | \,\, \pi(z)=H(q)\}$. Then, the restricted
projection $\kappa = (pr_{\cc})_{|_{\pi^{*}E}} : \pi^{*}E \to \cc$
gives us the desired pullback bundle. Also, there is a map
$\tilde{\Pi}^{'}=(pr_{E})_{|_{\pi^{*}E}} : \pi^{*}E \to E$ that
satisfies the condition $H \circ \tilde{\Pi}^{'} = \kappa \circ
\pi$ and so it is a bundle map over the map $\pi.$ Together with
that, $\tilde{\Pi}^{'} : \pi^{*}E \to E$ is a covering map.

Because $\cc$ is contractible, the pullback bundle $\kappa :
\pi^{*}E \to \cc$ is trivializible, i.e. there is a smooth bundle
isomorphism $\varsigma : \cc \times S_1 \to \pi^{*}E$ so that
$\kappa \circ \varsigma = pr_{\cc} \circ id_{\cc}$ where
$id_{\cc}$ is the identity map on $\cc$. Then, the composition
$\tilde{\Pi}=\tilde{\Pi}^{'} \circ \varsigma : \CtimesS \to E$
satisfies the condition $H \circ \tilde{\Pi}= \pi \circ pr_{\cc}$
and therefore is a bundle map and a covering map at the same time.
Without loss of generality we can think that $\tilde{\Pi}(0,p)=p,$
that is we identify the fiber $\{0\} \times S_1$ with the surface
$S_1$, where $\pi(0)=1$.

We are going to look at the deck group $\Gammatilde$ of the
covering map $\tilde{\Pi}.$ For any $\gammatilde \in \Gammatilde$
we have the relation $pr_{\cc} \circ \gammatilde = \gamma_0^m
\circ pr_{\cc}$ for some $m \in \Integers$. That is why, just like
$\Gamma$, the group $\Gammatilde$ is free Abelian with one
generator $\gammatilde_0(z,p) = (\gamma_0(z), \psi(z,p))$, where
$(z,p) \in \CtimesS$. The map $\psi : \CtimesS \to S_1$ is smooth
and if we use the notation $\psi_z(p) = \psi(z,p)$, then for any
fixed $z\in \cc$ the resulting map $\psi_z : S_1 \to S_1$ is a
diffeomorphism on the standard fiber $S_1$. If we factor
$\CtimesS$ by the action of the deck group $\Gammatilde$ we obtain
the manifold $(\CtimesS) / \Gammatilde$ which is isomorphic to $E$
as a fiber bundle over $B$.

Consider a thin open strip $N_{\epsilon} = \{z \in \cc \,\, |
\,\,\,|\text{Re}(z)| < \epsilon \}$ where $\epsilon <
\frac{1}{3}$. Let $N'_{\epsilon} = \gamma_0(N_{\epsilon})$ and
take $\tilde \Band = \Band \cup N_{\epsilon} \cup N'_{\epsilon}$.
Then we can regard the smooth map $\phi_0 : N_{\epsilon} \times
S_1 \to N'_{\epsilon} \times S_1$, defined by the expression
$\phi_0(z,p) = \gammatilde_0(z,p) =(\gamma_0(z),\psi(z,p))$ for
any $(z,p) \in N_{\epsilon}\times S_1$, as a gluing map. In other
words, since $\gammatilde_0$ respects the bundle structure of
$\CtimesS$, the quotients $(\tilde{\Band} \times S_1)/\phi_0$ and
$(\CtimesS)/\Gammatilde$ are smoothly isomorphic as fiber bundles
over $B$ (for isotopies of gluing maps, see for example
\cite{Hr}). Therefore, $(\tilde{\Band} \times S_1)/\phi_0$ and $E$
are smoothly isomorphic as bundles over $B$.

The strip $N_{\epsilon}$ deformation retracts onto the point $0
\in N_{\epsilon}$. Therefore, there exists a smooth deformation
retraction $\vartheta : N_{\epsilon} \times [0, \frac{1}{2}] \to
N_{\epsilon}$. Then, for $t=0$ the map $\vartheta_t : N_{\epsilon}
\to N_{\epsilon}$ is the identity on $N_{\epsilon}$, for $t=1$ it
is the constant map $\vartheta_1 \equiv 0$ and for all $t \in [0,
\frac{1}{2}]$ we have that $\vartheta_t(0)=0$. With the help of
$\vartheta_t$, we define the isotopy
\begin{align*}
 &\phi : N_{\epsilon} \times S_1 \times [0, 0.5] \to
 N'_{\epsilon} \times S \\
 &\phi_t(z,p)=\bigl(\gamma_0(z),\psi(\vartheta_t(z),p)\bigr).
\end{align*}
As a result, when $t=0$ we have the earlier defined map $\phi_0.$
Moreover, when $t=0.5$ we obtain the map
$\phi_{0.5}(z,p)=(\gamma_0(z),\psi(0,p))$ for $(z,p) \in
N_{\epsilon} \times S_1$. Notice that the second component of
$\phi_{0.5}$ does not depend on the variable $z$ but only on $p.$
Whenever $z=0$, the map $\psi_0(p) = \psi(0,p)$ is isotopic to the
Dehn twist $D_0=\tilde{D}_0^{-1}$. This follows from
Picard-Lefchetz's theory as discussed previously in the current
section and in \cite{AGV}. Thus, we can extend smoothly the
isotopy $\phi_t$ for $t \in [0,1]$ so that for $t=0$ the gluing
map is $\phi_0$, for $t=0.5$ the map becomes $\phi_{0.5}$ from
above and finally when $t=1$ we obtain $\phi_1(z,p) =
(\gamma_0(z), D_0(p))$ for all $(z,p) \in N_{\epsilon} \times
S_1$.

Notice that $\phi_t$ respects the vertical fibers $\{z\} \times
S_1,$ that is the isotopy takes place only with respect to the
second coordinate, along the surface $S_1,$ while the first
coordinate is kept the same. Therefore, $(\tilde{\Band} \times
S_1)/\phi_0$ and $(\tilde{\Band} \times S_1)/\phi_1$ are smoothly
isomorphic as fiber bundles over $B.$ As we already saw,
$(\tilde{\Band} \times S_1)/\phi_0$ and $E$ are isomorphic as
well. Hence, $(\tilde{\Band} \times S_1)/\phi_1$ and $E$ are
isomorphic as bundles over $B.$ Since by construction
$(\tilde{\Band} \times S_1)/\phi_1$ and $(\cc \times
S_1)/\Gammahat$ are also isomorphic as bundles over $B$, we can
conclude that there exists a smooth bundle isomorphism $\Phi :
(\CtimesS)/\Gammahat \to E.$ If $\upsilon : \CtimesS \to
(\CtimesS)/\Gammahat$ is the quotient map, then it is a bundle map
over the covering map $\pi.$ When we compose it with $\Phi$ we
obtain the desired bundle covering map $\Pi = \Phi \circ \upsilon
: \CtimesS \to E$ satisfying the condition $H \circ \Pi = \pi
\circ pr_{\cc}$ and having $\Gammahat$ as its group of deck
transformations. This completes the proof of the lemma.
\end{proof}

The results from lemma \ref{lemma_bundle_topology} are a main tool
in the proof of theorem \ref{The_Main_Theorem}. As it was
mentioned already, a deck transformation $\hat{\gamma_0}^m(z,p) =
(\gamma_0^m(z), D^m_0(p))$ from $\hat{\Gamma}$ maps not only
vertical fibers $\{z\} \times S$ to vertical fibers
$\{\gamma^m_0(z)\} \times S_1$ but also horizontal fibers $\cc
\times \{p\}$ to horizontal fibers $\cc \times \{D^m_0(p)\}$. In
particular, since $D^m_0$ acts on $S_1 - c_0$ as the identity map,
whenever $p \in S_1 \setminus C_0,$ the horizontal plane $\cc
\times \{p\}$ is invariant under the action of $\Gammahat.$ These
facts lead us to the following conclusion.

\begin{corollary} \label{transverse surface}
For $p \in S_1 \setminus C_0$, the projection $\Pi(\cc \times
\{p\}) = B_p$ is a smoothly embedded surface in $E,$ diffeomorphic
to the punctured plane $B.$ Moreover, $B_p$ intersects each leaf
from the integrable foliation $\Folzero$ transversely at a single
point.
\end{corollary}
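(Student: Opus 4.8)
The plan is to read the whole statement off Lemma \ref{lemma_bundle_topology} together with the observation recorded immediately before this corollary: when $p \in S_1 \setminus C_0$ the Dehn twist $D_0$ fixes $p$, so $D_0^m(p) = p$ for every $m \in \Integers$, and hence the horizontal plane $\cc \times \{p\}$ is a $\Gammahat$-invariant (saturated) subset of $\CtimesS$. First I would use this to see that $B_p$ is a smoothly embedded submanifold of $E$: the covering map $\Pi : \CtimesS \to E$ restricts to a covering map $\cc \times \{p\} \to B_p$, and since $\cc \times \{p\}$ is a closed submanifold saturated for the free, properly discontinuous action of $\Gammahat$, its image $B_p = (\cc \times \{p\})/\Gammahat$ is a closed embedded submanifold of $E = (\CtimesS)/\Gammahat$ (the natural map $B_p \to E$ is a proper injective immersion, hence an embedding).

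The efficient way to get both the ``single point'' and the ``transversely'' assertions at once is to prove that $H$ restricts to a diffeomorphism $H|_{B_p} : B_p \to B$. By part 1 of Lemma \ref{lemma_bundle_topology} one has $H(\Pi(z,p)) = \pi(z) = e^{2\pi i z}$ for all $z \in \cc$, so the composition of $z \mapsto \Pi(z,p)$ with $H|_{B_p}$ is the universal covering $\pi : \cc \to B$. Consequently $H|_{B_p}$ is a local diffeomorphism and is surjective. It is injective because $H(\Pi(z_1,p)) = H(\Pi(z_2,p))$ forces $z_2 \in z_1 + \Integers$, and the deck transformations $\hat{\gamma}_0^m$, which fix $p$ since $p \notin C_0$, identify $(z_1,p)$ with $(z_2,p)$, hence $\Pi(z_1,p) = \Pi(z_2,p)$. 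Therefore $H|_{B_p}$ is a diffeomorphism onto $B$; in particular $B_p$ is diffeomorphic to the punctured plane $B$.

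Now fix a leaf $S_{\h}$ of the integrable foliation $\Folzero$, so that $\h \in B$ is a regular value of $H$. Then $B_p \cap S_{\h} = (H|_{B_p})^{-1}(\h)$ consists of a single point $q$. Transversality at $q$ is immediate: $T_q S_{\h} = \ker(dH_q)$ because $\h$ is a regular value, while $dH_q$ is injective on $T_q B_p$ because $H|_{B_p}$ is a local diffeomorphism; hence $T_q B_p \cap T_q S_{\h} = 0$, and since $\dim_{\Real} T_q B_p + \dim_{\Real} T_q S_{\h} = 2 + 2 = 4 = \dim_{\Real} T_q E$, the two tangent planes span all of $T_q E$ and $B_p$ meets $S_{\h}$ transversely.

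I do not expect a genuine obstacle here. The only point that needs a little care is arguing that $B_p$ is embedded rather than merely immersed, and that reduces entirely to the remark that $\cc \times \{p\}$ is saturated for the $\Gammahat$-action; given that, everything else is a formal consequence of the identity $H \circ \Pi = \pi \circ pr_{\cc}$ from Lemma \ref{lemma_bundle_topology} together with the fact that $\pi'(z) = 2\pi i\, e^{2\pi i z}$ never vanishes.
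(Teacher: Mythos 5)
Your proposal is correct and takes exactly the route the paper intends: the paper offers no separate proof, instead stating the corollary as an immediate consequence of the observation preceding it (that $D_0^m$ fixes every $p \in S_1 \setminus C_0$, so $\cc \times \{p\}$ is $\Gammahat$-invariant) together with part 1 of Lemma \ref{lemma_bundle_topology}. You have simply supplied the details — embeddedness of the saturated quotient and the identity $H \circ \Pi = \pi \circ pr_{\cc}$ forcing $H|_{B_p}$ to be a diffeomorphism onto $B$ — that the paper treats as self-evident.
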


In particular, this corollary applies to the point $q_0=(1,0)$.
Thus, we have obtained a global cross-section $B_{q_0}$.

\section{The non-local Poincar\'e map}
\label{Section_{Map_Periodic_Orbits_Cycles}}

For the rest of the article, we are going fix some $m \in
\Naturals$ and take the parameter $a=a_m$ as in lemma \ref{Lemma
Example Periodic Orbit Existence}. Since $a=a_m$ is fixed, from
now on we are going to drop $a$ from all notations that contain it
either as a subscript or a superscript. For example, when $a=a_m$
we will write $\Fole$ instead of $\Fol$, the folaition leaves will
be denoted by $\Leafe$ instead of $\Leafae$ and the notation
$\Pmape$ will replace the previously accepted notation $\Pmap$ for
the Poincar\'e map.

Apart from the annulus $A_0$ from section
\ref{Section_Main_Theorem}, define two more domains in $B$.
Remember that for a pair of numbers $0< \rho < R$ we defined the
annulus $A(\rho,R) = \{\h \in \cc \,\, | \,\, \rho < |\h| < R\}$.
Fix the small positive numbers $\rho_0
>\rho'_0 > \rho'_1 > 0$ and the large ones $0 < R_0 <
R'_0 < R'_1$. Recall that $A_0 = A(\rho_0, R_0)$. Denote by $A'_0$
and $A'_1$ the annuli $A(\rho'_0,R'_0)$ and $A(\rho'_1,R'_1)$
respectively. As a result we obtain three nested open sets $A_0
\subset A'_0 \subset A'_1$.
$$\text{Recall that} \,\, E_0 = H^{-1}(A_0) \,\, \text{and define} \,\,  E'_1 = H^{-1}(A'_1).$$
Next, we lift on $\cc$ all annuli form the previous paragraph to
obtain the three horizontal strips $\pi^{-1}(A_0), \,
\pi^{-1}(A'_0)$ and $\pi^{-1}(A'_1)$. We fix the following
cross-sections in $\CtimesS$: $$\Ahatzero = \pi^{-1}(A_0) \times
\{q_0\}, \,\,\, \Ahatzeroprime = \pi^{-1}(A'_0) \times \{q_0\}
\,\,\, \text{and} \, \,\, \Ahatoneprime = \pi^{-1}(A'_1) \times
\{q_0\}.$$ All of them are subsets of $\cc \times \{q_0\}$. Let
$$\Pi_0 = \Pi|_{\cc \times \{q_0\}} \,\, : \,\, \cc \times \{q_0\}
\, \longrightarrow \, B_{q_0}.$$ Projecting by $\Pi_0$, let
$$A_0(q_0) = \Pi_0(\Ahatzero), \,\, A'_0(q_0) = \Pi_0(\Ahatzeroprime) \,\,\,
\text{and} \,\,\, A'_1(q_0)=\Pi_0(\Ahatoneprime),$$ all of which
are subsets of the embedded in $E$ surface $B_{q_0}$.

From now on, we also use the shorter notations $\hat{z} = (z, q_0)
\in \cc \times \{q_0\}$.

Consider the pulled-back foliation $\Folehat=\Pi^*\Fole$ on the
covering space $\CtimesS$. It is invariant with respect to the
action of $\Gammahat$. In other words, if $\gammahat \in
\Gammahat$ and $\Leafhatezp$ is a leaf of $\Folehat$ passing
through the point $(z,p) \in \CtimesS,$ then $\gammahat
(\Leafhatezp) = \Leafhate(\gammahat(z,p))$.

Notice that the closure of the projection
$\Pi(\Ahat'_1)=A'_1(q_0)$ is compact in $E$ and the tangent field
$\Fielde$ is transverse to the embedded cylinder $A'_1(q_0)$ for
all $|\e| \leq r$, where $r>0$ is chosen small enough. For the
next lemma we need the strip $Q = \gamma_0^{-1}(\Band) \cup \Band
\cup \gamma_0(\Band)$ consisting of three adjacent copies of the
closed fundamental domain of $\Gamma$. Take $Q'_0 = \overline{Q
\cap \pi^{-1}(A'_0)}$ and let $\hat{Q}'_0 = Q'_0 \times \{q_0\}$.

\begin{lemma} \label{lemma about lifted Pmap}
For a small enough $r>0$ and for any $|\e| \leq r$ there exists a
smooth Poincar\'e map $\Pmaphate : \hat{Q}'_0 \to \Ahatoneprime$
associated with the foliation $\Folehat$ such that for any
$\gammahat \in \Gammahat$ if both points $\hat{z}$ and
$\gammahat(\hat{z})$ belong to $\hat{Q}'_0$ then $\gammahat \circ
\Pmaphate = \Pmaphate \circ \gammahat.$
\end{lemma}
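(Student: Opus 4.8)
The plan is to realize $\Pmaphate$ as the holonomy transformation of the pulled-back foliation $\Folehat=\Pi^*\Fole$, computed along (a lift of) the loop $\delta_0$ on the covering space $\CtimesS$ with respect to the global ``horizontal'' cross-section $\cc\times\{q_0\}$. Since $S_1$ is a leaf of $\Fole$ for every $\e$ (verified in Section \ref{Section_Main_Theorem}) and $\delta_0\subset S_1$, this holonomy is already well defined near $S_1$, exactly as in the Introduction's construction of the local Poincar\'e map; the work is to push it out over the large annulus $A'_1$ and to check the $\Gammahat$-equivariance. The latter will fall out of the $\Gammahat$-invariance of $\Folehat$ together with the fact that $\delta_0$ stays away from the cylinder $C_0$ carrying the Dehn twist $D_0$.

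First I would fix the transversality data. Parametrize $\delta_0$ as a smooth loop $s\mapsto\delta_0(s)$, $s\in[0,1]$, based at $q_0$; because $\delta_0\cap\overline{C}_0=\varnothing$ its image lies in $S_1\setminus\overline{C}_0$. For $s\in[0,1]$ set $\Sigma_s=\cc\times\{\delta_0(s)\}\subset\CtimesS$, so $\Sigma_0=\Sigma_1\subset\cc\times\{q_0\}$. Every deck transformation, having the form $(z,p)\mapsto(z+k,D_0^{\,k}(p))$ for some $k\in\Integers$, maps $\Sigma_s$ onto itself, since $D_0^{\,k}$ is the identity on $S_1\setminus\overline{C}_0\supset\delta_0$. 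By Corollary \ref{transverse surface} each surface $B_{\delta_0(s)}=\Pi(\Sigma_s)$ is transverse to $\Folzero$; transversality is an open condition and the set of base points over $\overline{A'_1(q_0)}$ swept out as $s$ ranges over $[0,1]$ is compact, so after shrinking $r>0$ the field $\Fielde$ stays transverse to all the $B_{\delta_0(s)}$ there, that is, $\Folehat$ is transverse to every slice $\Sigma_s$ over the strip $\pi^{-1}(A'_1)$.

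With this transversality one can lift the path $s\mapsto\delta_0(s)$ inside the leaves of $\Folehat$: for $\hat{z}=(z,q_0)\in\Sigma_0$ lying over $A'_0$ there is a unique maximal path on the leaf $\Leafhate(\hat{z})$ that starts at $\hat{z}$ and whose image under $pr_{S_1}$ is $\delta_0$, and one defines $\Pmaphate(\hat{z})$ to be its endpoint, which lands in $\Sigma_1\subset\cc\times\{q_0\}$. For $\e=0$ the leaf through $\hat{z}$ is $\{z\}\times S_1$, the lift is $\{z\}\times\delta_0$, and $\Pmaphate$ is just the inclusion $\hat{Q}'_0\hookrightarrow\Ahatoneprime$ (recall $Q'_0\subset\pi^{-1}(\overline{A'_0})\subset\pi^{-1}(A'_1)$). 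Since $\Folehat$ depends smoothly on $\e$, $\hat{Q}'_0$ is compact, and $Q'_0$ lies in the interior of $\pi^{-1}(A'_1)$ in the $|\h|$-direction, a continuity-and-compactness argument shows that for $r>0$ small enough the $\delta_0$-lift exists over all of $\hat{Q}'_0$, never leaves the strip $\pi^{-1}(A'_1)$ (so transversality is never lost along the way) and ends $C^0$-close to where it started, hence inside $\Ahatoneprime$; smooth dependence of leaves on initial conditions then makes $\Pmaphate:\hat{Q}'_0\to\Ahatoneprime$ smooth. The three-fundamental-domain width of $Q$ is there to make the equivariance statement non-vacuous and to provide the overlap of $\Pmaphate$ with its deck translates used later.

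For the equivariance, fix $\gammahat\in\Gammahat$ and suppose both $\hat{z}$ and $\gammahat(\hat{z})$ lie in $\hat{Q}'_0$. Invariance of $\Folehat$ under $\Gammahat$ takes the leaf $\Leafhate(\hat{z})$ to $\Leafhate(\gammahat(\hat{z}))$, and since $\gammahat$ fixes each slice $\Sigma_s$ (as $D_0^{\,k}$ is the identity near $\delta_0(s)$) it carries the $\delta_0$-lift through $\hat{z}$ to the $\delta_0$-lift through $\gammahat(\hat{z})$; reading off endpoints gives $\gammahat\circ\Pmaphate=\Pmaphate\circ\gammahat$ wherever both sides are defined. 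The genuinely delicate point is the global existence used above: one must be sure the $\delta_0$-holonomy is defined on the entire compact cross-section $\hat{Q}'_0$ — not merely near the single leaf $S_1$, which is all the Introduction's construction provides — and that the lifted paths never wander out of the strip $\pi^{-1}(A'_1)$ over which transversality has been secured. This is exactly where the uniform transversality over the compact closure $\overline{A'_1(q_0)}$, the smallness of $r$, and the disjointness $\delta_0\cap\overline{C}_0=\varnothing$ all come into play.
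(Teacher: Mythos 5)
Your proof is correct and takes essentially the same approach as the paper: both lift $\delta_0$ along the leaves of $\Folehat$ using the projection $pr_{S_1}$, both choose $r$ small enough (via continuity/compactness) so that the lifts exist over $\hat{Q}'_0$ and stay inside $\pi^{-1}(A'_1)\times S_1$, and both derive equivariance from the $\Gammahat$-invariance of $\Folehat$ combined with the fact that $D_0^k$ fixes $\delta_0$ pointwise because $\delta_0\cap\overline{C}_0=\varnothing$. Your reformulation via the slices $\Sigma_s$ and the explicit $\e=0$ base case is a slightly more detailed packaging of the same transversality and global-existence argument that the paper compresses into ``by continuous dependence on parameters and initial conditions.''
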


\begin{proof} As usual, let $pr_{S_1} : \CtimesS \to S_1$ be the
projection $(z,p) \mapsto p.$ By continuous dependance of
$\Folehat$ on parameters and initial conditions, we can choose the
radius $r$ of the parameter space so that the construction that
follows holds for any $|\e| \leq r$. Choose an arbitrary point
$\hat{z} \in \hat{Q}'_0.$ If $\Leafhate(\hat{z})$ is the leaf of
the perturbed foliation $\Folehat,$ passing through $\hat{z} =
(z,q_0),$ lift the loop $\delta_0$ to a curve
$\hat{\delta}_{\e}(\hat{z})$ on $\Leafhate(\hat{z})$ so that
$\hat{\delta}_{\e}(\hat{z})$ covers $\delta_0$ under the
projection $pr_{S_1}.$ Since $r$ is chosen small enough, the lift
$\hat{\delta}_{\e}(\hat{z})$ is contained in the domain
$\pi^{-1}(A'_1) \times S_1$ and both of its endpoints are on
$\Ahatoneprime.$ The first endpoint is $\hat{z} \in \hat{Q}'_0$
and the second we denote by $\Pmaphate(\hat{z}) \in
\Ahatoneprime.$ Thus, we obtain the correspondence $\Pmaphate :
\hat{Q}'_0 \to \Ahatoneprime$, which is a smooth map close to
identity.

By construction, the cross-section $\Ahatoneprime$ is
$\Gammahat-$invariant. Now, let $\hat{z} \in \hat{Q}'_0$ and
assume that $\gammahat(\hat{z}) \in \hat{Q}'_0$ for some
$\gammahat \in \Gammahat.$ As pointed out earlier, the arc
$\hat{\delta}_{\e}(\hat{z})$ is the lift of $\delta_0$ on
$\Leafhate(\hat{z})$ under the projection $pr_{S_1}.$ It connects
the two points $\hat{z} \in \hat{Q}$ and $\Pmaphat(\hat{z}) \in
\Ahatoneprime.$ The image $\gammahat(\hat{\delta}_{\e}(\hat{z}))$
lies on the leaf $\gammahat(\Leafhate(\hat{z})) =
\Leafhate(\gammahat(\hat{z}))$ and its endpoints are
$\gammahat(\hat{z}) \in \hat{Q}'_0$ and
$\gammahat(\Pmaphate(\hat{z})) \in \Ahatoneprime.$ Since
$\Gammahat \cong \Integers$, its element $\gammahat =
\gammahat_0^k$ for some $k \in \Integers$. Therefore $pr_{S_1}
\circ \gammahat(z,p)=pr_{S_1}(\gamma_0^k(z),D_0^k(p))=D_0^k(p)=
D_0^k \circ pr_{S_1}(z,p)$ for any $(z,p) \in \CtimesS$. The fact
that $\hat{\delta}_{\e}(\hat{z})$ is the lift of $\delta_0$ on the
leaf $\Leafhate(\hat{z})$ from $\Folehat$ means that
$pr_{S_1}(\hat{\delta}_{\e}(\hat{z}))=\delta_0.$ Similarly, to
find out what the arc $\gammahat(\hat{\delta}_{\e}(\hat{z}))$ is a
lift of, we just have to project it onto $S_1.$ Using the property
$pr_{S_1} \circ \gammahat_0^k = D_0^k \circ pr_{S_1}$ we conclude
that $pr_{S_1} \circ
\gammahat_0^k(\hat{\delta}_{\e}(\hat{z}))=D_0^k \circ
pr_{S_1}(\hat{\delta}_{\e}(\hat{z}))=D_0^k(\delta_0).$ Since $D_0$
acts like the identity everywhere on $S_1$ except for the thin
cylinder $C_0 \subset S_1$ and $\delta_0 \cap C_0 = \varnothing$,
it immediately follows that $D_0^k(\delta_0) = \delta_0$.
Therefore $\gammahat(\hat{\delta}_{\e}(\hat{z}))$ is the lift of
$\delta_0$ on the leaf $\Leafhate(\gamma(\hat{z}))$ under the
projection $pr_{S_1}.$ Whit this in mind, the endpoint
$\gammahat(\Pmaphate(\hat{z}))$ can also be rewritten as
$\Pmaphate(\gammahat(\hat{z})).$ Thus, we obtain the relation
$\gammahat \circ \Pmaphate = \Pmaphate \circ \gammahat.$
\end{proof}

Lemma \ref{lemma about lifted Pmap} allows us to extend
$\Pmaphate$ from a map on $\hat{Q}'_0$ to a $\Gammahat-$equivarint
map on the whole cross-section $\Ahatzeroprime \subset \cc \times
\{q_0\}.$ This fact makes it possible for the $\Pmaphate$ to
descend under the covering $\Pi_0 : \Ahatzeroprime \to
\Azeroprimeq$ to a Poincar\'e map defined on $\Azeroprimeq \subset
E.$

\begin{corollary} \label{Corollary Pmaphat extension}
The transformation $\Pmaphate$ constructed in lemma \ref{lemma
about lifted Pmap} extends to a map $\Pmaphate : \Ahatzeroprime
\to \Ahatoneprime$ for the foliation $\Folehat$ such that for any
$\gammahat \in \Gammahat$ the equivariance relation $ \gammahat
\circ \Pmaphate = \Pmaphate \circ \gammahat$ holds.
\end{corollary}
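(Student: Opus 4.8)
The plan is to use the $\Gammahat$-equivariance established in Lemma \ref{lemma about lifted Pmap} to patch the locally defined Poincar\'e map into a globally defined one on the whole strip $\pi^{-1}(A'_0) \times \{q_0\} = \Ahatzeroprime$. First I would observe that $\Gammahat \cong \Integers$ is generated by $\gammahat_0$, and that the strip $Q'_0 = \overline{Q \cap \pi^{-1}(A'_0)}$ consists of three adjacent copies of the fundamental domain of $\Gamma$ restricted to $\pi^{-1}(A'_0)$. Consequently the $\Gammahat$-translates $\{\,\gammahat_0^k(\hat{Q}'_0)\,\}_{k \in \Integers}$ cover all of $\Ahatzeroprime$, and — because $Q$ already contains a whole fundamental domain together with its two neighbours — consecutive translates overlap in a set with nonempty interior. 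So every point of $\Ahatzeroprime$ lies in $\gammahat_0^k(\hat{Q}'_0)$ for at least one $k$, and if it lies in two such translates, say $\gammahat_0^k(\hat{Q}'_0)$ and $\gammahat_0^{k'}(\hat{Q}'_0)$, then it lies in $\gammahat_0^k(\hat{Q}'_0 \cap \gammahat_0^{k'-k}(\hat{Q}'_0))$, where the equivariance identity from Lemma \ref{lemma about lifted Pmap} is available.

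Next I would define the extension. For $\hat{z} \in \Ahatzeroprime$, pick $k$ with $\gammahat_0^{-k}(\hat{z}) \in \hat{Q}'_0$ and set $\Pmaphate(\hat{z}) := \gammahat_0^{k}\bigl(\Pmaphate(\gammahat_0^{-k}(\hat{z}))\bigr)$, using the original map on $\hat{Q}'_0$. The key point is well-definedness: if also $\gammahat_0^{-k'}(\hat{z}) \in \hat{Q}'_0$, write $\hat{y} = \gammahat_0^{-k}(\hat{z})$, so $\hat{y}$ and $\gammahat_0^{k-k'}(\hat{y})$ both lie in $\hat{Q}'_0$; then Lemma \ref{lemma about lifted Pmap} gives $\gammahat_0^{k-k'} \circ \Pmaphate(\hat{y}) = \Pmaphate \circ \gammahat_0^{k-k'}(\hat{y})$, and applying $\gammahat_0^{k'}$ to both sides shows the two candidate values of $\Pmaphate(\hat{z})$ agree. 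Smoothness of the extension is then automatic: on each open translate $\gammahat_0^k(\operatorname{int}\hat{Q}'_0)$ it equals $\gammahat_0^k \circ \Pmaphate \circ \gammahat_0^{-k}$, a composition of smooth maps, and these local formulas agree on overlaps, so they glue to a smooth map on $\Ahatzeroprime$. That the image lands in $\Ahatoneprime$ follows because $\Ahatoneprime$ is $\Gammahat$-invariant and the original $\Pmaphate$ maps $\hat{Q}'_0$ into $\Ahatoneprime$.

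Finally, the equivariance $\gammahat \circ \Pmaphate = \Pmaphate \circ \gammahat$ on all of $\Ahatzeroprime$ is immediate from the construction: for $\hat{z} \in \Ahatzeroprime$ and $\gammahat = \gammahat_0^{\ell}$, choosing $k$ with $\gammahat_0^{-k}(\hat{z}) \in \hat{Q}'_0$ one has $\gammahat_0^{-k}\bigl(\gammahat_0^{\ell}(\hat{z})\bigr) = \gammahat_0^{\ell-k+\ell'}(\ldots)$ — more simply, $\gammahat_0^{\ell}(\hat{z})$ is represented by the translate index $k+\ell$, so $\Pmaphate(\gammahat_0^{\ell}(\hat{z})) = \gammahat_0^{k+\ell}\Pmaphate(\gammahat_0^{-k}(\hat{z})) = \gammahat_0^{\ell}\bigl(\gammahat_0^{k}\Pmaphate(\gammahat_0^{-k}(\hat{z}))\bigr) = \gammahat_0^{\ell}\Pmaphate(\hat{z})$. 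I expect the only subtle point to be the well-definedness check on overlaps of translates; everything else is bookkeeping with the deck action. Note that the covering $\Pi_0 : \Ahatzeroprime \to \Azeroprimeq$ then lets this equivariant map descend to $\Azeroprimeq$, which is the use made of the corollary in the sequel.
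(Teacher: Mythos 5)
Your argument is correct and is essentially the paper's own proof: both define the extension on $\gammahat_0^k(\hat{Q}'_0)$ by conjugation $\gammahat_0^k \circ \Pmaphate \circ \gammahat_0^{-k}$ and invoke the equivariance from Lemma~\ref{lemma about lifted Pmap} to see that the local definitions agree on overlapping translates. Your version simply spells out the overlap and well-definedness bookkeeping more explicitly than the paper does.
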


\begin{proof}
By construction, both $\Ahatzeroprime$ and $\Ahatoneprime$ are
$\Gammahat-$invariant, that is
$\gammahat(\Ahatzeroprime)=\Ahatzeroprime$ and
$\gammahat(\Ahatoneprime)=\Ahatoneprime$ for any $\gammahat \in
\Gammahat.$ Since $\Ahatzeroprime = \cup_{k \in \Integers} \,\,
\gammahat_0^k(\hat{Q}'_0),$ we can define $\Pmaphate$ on each
piece $\gammahat_0^k(\hat{Q}'_0)$ as the conjugated map
$$\gammahat \circ \Pmaphate \circ \gammahat^{-1} \,\, : \,\,
\gammahat_0^k(\hat{Q}'_0) \, \longrightarrow \, \Ahatoneprime.$$
By lemma \ref{lemma about lifted Pmap}, for two group elements
$\gammahat_1$ and $\gammahat_2 \in \Gammahat,$ the two maps
$\gammahat_1 \circ \Pmaphate \circ \gammahat^{-1}_1$ and
$\gammahat_2 \circ \Pmaphate \circ \gammahat^{-1}_2$ agree on the
intersection $\gammahat_1(\hat{Q}'_0) \cap
\gammahat_2(\hat{Q}'_0)$ whenever it is nonempty.
\end{proof}

\begin{corollary} \label{Corollary Pmaphat descend}
The transformation $\Pmaphate : \Ahatzeroprime \to \Ahatoneprime$
associated with the foliation $\Folehat$ descends to a smooth
Poincar\'e map $\Pmape : \Azeroprimeq \to \Aoneprimeq$ for the
foliation $\Fole$ under the covering bundle map $\Pi : \CtimesS
\to E$. In other words, for any $\hat{z} = (z,q_0) \in
\Ahatzeroprime$ the relation $\Pi_0 \circ \Pmaphate(\hat{z}) =
\Pmape \circ \Pi_0(\hat{z})$ holds.
\end{corollary}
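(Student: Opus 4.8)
The plan is to push $\Pmaphate$ down through the covering $\Pi_0$ using the equivariance from Corollary \ref{Corollary Pmaphat extension}. Recall that $\Ahatzeroprime = \pi^{-1}(A'_0) \times \{q_0\}$ and $\Ahatoneprime = \pi^{-1}(A'_1) \times \{q_0\}$ are $\Gammahat$-invariant subsets of $\cc \times \{q_0\}$, since $\pi^{-1}(A'_0)$ and $\pi^{-1}(A'_1)$ are $\Gamma$-invariant and $D_0(q_0) = q_0$; hence $\Pi_0$ restricts to covering maps $\Ahatzeroprime \to \Azeroprimeq$ and $\Ahatoneprime \to \Aoneprimeq$ whose deck group is the restriction of $\Gammahat$. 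I would then set $\Pmape(\Pi_0(\hat{z})) := \Pi_0(\Pmaphate(\hat{z}))$ for $\hat{z} \in \Ahatzeroprime$; this is precisely the asserted relation $\Pi_0 \circ \Pmaphate = \Pmape \circ \Pi_0$, and it is well defined because any two preimages of a point of $\Azeroprimeq$ differ by some $\gammahat \in \Gammahat$ and $\Pmaphate(\gammahat(\hat{z})) = \gammahat(\Pmaphate(\hat{z}))$ has the same $\Pi_0$-image as $\Pmaphate(\hat{z})$. Smoothness is then automatic: on any open $U$ on which $\Pi_0$ restricts to a diffeomorphism onto its image one has $\Pmape = \Pi_0 \circ \Pmaphate \circ (\Pi_0|_U)^{-1}$, a composition of smooth maps.

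Next I would verify that $\Pmape$ is genuinely a Poincar\'e map for $\Fole$ on the cross-section $B_{q_0}$, which is transverse to $\Fole$ by Corollary \ref{transverse surface} for $|\e|$ small enough. Two properties of $\Pi$ from Lemma \ref{lemma_bundle_topology} do the work. Since $\Folehat = \Pi^*\Fole$ and $\Pi$ is a covering map, $\Pi$ carries the leaf $\Leafhate(\hat{z})$ onto the leaf $\Leafe(\Pi(\hat{z}))$, restricting to a covering between them; hence $\Pi \circ \hat{\delta}_{\e}(\hat{z})$ is a path on $\Leafe(\Pi(\hat{z}))$ from $\Pi(\hat{z}) \in \Azeroprimeq$ to $\Pi(\Pmaphate(\hat{z})) \in \Aoneprimeq$, both endpoints lying on $B_{q_0} = \Pi(\cc \times \{q_0\})$. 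Moreover $pr_{S_1} \circ \hat{\delta}_{\e}(\hat{z}) = \delta_0$ by construction of $\Pmaphate$, and because $H \circ \Pi = \pi \circ pr_{\cc}$ and $pr_{S_1}$ restricts to a diffeomorphism $\{0\} \times S_1 \cong S_1$, the map $\Pi$ can be used to transport a product structure on a neighborhood of $\delta_0$ in $\CtimesS$ to one on $N(\delta_0)$ whose projection $\varrho$ agrees with $pr_{S_1}$ along $\hat{\delta}_{\e}(\hat{z})$. Thus $\Pi \circ \hat{\delta}_{\e}(\hat{z})$ traverses $\delta_0$ exactly once under $\varrho$ and is precisely the lift of $\delta_0$ along $\Leafe(\Pi(\hat{z}))$ used to define the non-local Poincar\'e map, so $\Pmape(\Pi_0(\hat{z}))$ is its terminal point.

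I expect the delicate point to be this last matching of product structures: one must check that the lifted arcs $\hat{\delta}_{\e}(\hat{z})$, for all $\hat{z} \in \Ahatzeroprime$, stay inside the region $\pi^{-1}(A'_1) \times S_1$ where everything is controlled, which was arranged in Lemma \ref{lemma about lifted Pmap} by shrinking $r$, and that the overlap of $N(\delta_0)$ with $E'_1 = H^{-1}(A'_1)$ accommodates the chosen product structure along the full non-local lift. Since, as recalled earlier in the paper, the Poincar\'e map is independent of the product structure on $N(\delta_0)$ up to isotopy fixing $A(\delta_0)$ pointwise, exhibiting the single $\Pi$-induced structure suffices and no further estimates are needed. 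Once this is settled, the corollary follows.
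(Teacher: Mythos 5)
Your proposal is correct and follows the same route as the paper: the paper's own proof is a single sentence ("The statement follows directly from corollary \ref{Corollary Pmaphat extension}"), and you have simply made the standard descent argument explicit — defining $\Pmape \circ \Pi_0 := \Pi_0 \circ \Pmaphate$, checking well-definedness via the $\Gammahat$-equivariance from Corollary \ref{Corollary Pmaphat extension}, and noting smoothness locally. Your additional two paragraphs verifying that the descended map really is the non-local Poincar\'e transformation for $\Fole$ (by matching the $\Pi$-pushforward of the lifted arcs $\hat{\delta}_{\e}(\hat{z})$ with the lifts of $\delta_0$ used to define $\Pmape$, and appealing to the product-structure independence discussed in the introduction) fill in a step the paper leaves implicit; this is a reasonable thing to spell out, and nothing in it is wrong, but it is not a genuinely different approach.
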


\begin{proof}
The statement follows directly from corollary \ref{Corollary
Pmaphat extension}.
\end{proof}

On a side note, but still worth mentioning is a fact that follows
from the constructions in the proof of lemma \ref{lemma about
lifted Pmap}. It is not difficult to see that the Poincar\'e map
is not sensitive to (local) homotopies of the base loop
$\delta_0$. In other words, if $\delta_0$ is homotopic on $S_1$ to
another loop $\delta'_0$ passing through $q_0,$ then the two maps
obtained by the lifting of $\delta_0$ and $\delta'_0$ onto the
leaves of the foliation $\Folehat$ under the projection $pr_{S_1}$
will be equal, as long as $\delta'_0$ is close enough to
$\delta_0$ on $S_1$ or the radius $r$ is kept small enough. Thus,
if we slightly wiggle $\delta_0$ on $S_1$ but keep the base point
$q_0$ fixed, the resulting Poincar\'e map will stay the same.
Consequently, the same is true for $\Pmape.$

\section{Complex structure on the cross-section} \label{Section_complex_structure}

Apart from the smooth structure of a fiber bundle, the space $E$,
being a subset of $\CC,$ has a complex structure with respect to
which the foliation $\Fole$ is holomorphic and depends
analytically on the parameter $\e$. This fact provides the
foliation with very specific properties. On the other hand, the
Poincar\'e map $\Pmape : A'_0(q_0) \to A'_1(q_0)$ associated with
$\Fole$ captures some topological properties of the foliation.
Since some of those properties are strongly related to the
holomorphic nature of the foliation, we would like our Poincar\'e
map to reflect the complex analyticity of $\Fole$. So far $\Pmape$
is defined as a smooth map on a subdomain of the smooth surface
$A'_1(q_0)$ and therefore our next step is to induce a complex
structure on $A'_1(q_0)$ in which the Poincar\'e transformation is
holomorphic.

Since the closure of $A'_1(q_0)$ is transverse to $\Fole$, there
is an open neighborhood of $A'_1(q_0)$ in $B_{q_0}$ transverse to
$\Fole.$ Fix $\e \in D_r(0).$ Take a point $q' \in \Aoneprimeq$
and a complex cross-section $T_{q'}$ through $q',$ transverse to
$\Fole.$ More precisely, $T_{q'}$ is a complex segment, i.e. it
lies on a complex line through $q'$ and is a real two dimensional
disc.

The fact that the foliation $\Fole$ is holomorphic and
$\Aoneprimeq$ is smoothly embedded surface transverse to $\Fol$
provides us with convenient holomorphic flow-box charts of $\CC$.
A chart of this kind consists of an open neighborhood $FB(q')
\subset E$ of $q'$ and a biholomorphic map $$\betaq \, : \, \Disc
\times \Disc \longrightarrow FB(q')$$ with the following
properties: \vspace{1.5mm}

\noindent 1. \, $\betaq(0,0)=q';$ \vspace{1.5mm}

\noindent 2. \, $\betaq(\Disc \times\{0\})=T_{q'};$ \vspace{1.5mm}

\noindent 3. \, $\betaq(\{\zeta\} \times \Disc)$ is a connected
component of the intersection of $FB(q')$  with the leaf
$\Leafe(\betaq(\zeta,0))$ through the point $\betaq(\zeta,0)$ for
any $\zeta \in \Disc;$ \vspace{1.5mm}

\noindent 4. \, The portion of $\Aoneprimeq$ passing through
$FB(q')$ looks like the graph of a smooth map $\alphaq : \Disc \to
\Disc$ in the chart $\Disc \times \Disc.$ In other words
\begin{align*}\betaq^{-1}(FB(q') \cap \Aoneprimeq)=
\{(\zeta,\alphaq(\zeta)) \in \Disc \times \Disc \,\, | \,\,
\alphaq : \Disc \to \Disc \,\, \text{is smooth}\}.
\end{align*}

\noindent Denote by $\Uq$ the open subset $FB(q_0) \cap
\Aoneprimeq$ of $\Aoneprimeq.$ Let $pr_j : \Disc \times \Disc \to
\Disc$ be $pr_j(\zeta_1,\zeta_2)=\zeta_j,$ where $j=1,2.$ Define
the diffeomorphism
\begin{align*}
\phi_{q',\e} \,\, &: \,\, \Uq \, \longrightarrow \, \Disc \,\,\, \text{by} \\
\phi_{q',\e} \,\, &: \,\, q \longmapsto pr_1 \circ (\betaq^{-1})\big{|}_{\Uq}(q) \\
\phi_{q',\e}^{-1} \,\, &: \,\, \zeta \longmapsto
\betaq(\zeta,\alphaq(\zeta)).
\end{align*}
Consider the family of pairs
$\Atlas_{\e}(\Aoneprimeq)=\{(\Uq,\phi_{q',\e}) \,\, | \, \, q' \in
\Aoneprimeq\}.$

\begin{lemma} \label{lemma complex atlas}
The collection of charts $\Atlas_{\e}(\Aoneprimeq)$ is a
holomorphic atlas for the surface $\Aoneprimeq$ with charts
depending complex-analytically on $\e$.
\end{lemma}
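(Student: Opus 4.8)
The plan is to verify three things in turn: that the domains $\Uq$ form an open cover of $\Aoneprimeq$ and that each $\phi_{q',\e}$ is a diffeomorphism; that on overlaps the transition maps are biholomorphisms; and that the charts, hence the transition maps, can be taken to depend complex-analytically on $\e$.

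First I would arrange the cover to be $\e$-independent. Since $\Aoneprimeq$ has compact closure in $E$ and $\Fielde$ is transverse to it for all $|\e| \le r$, after possibly decreasing $r$ one can pick around each $q' \in \Aoneprimeq$ a flow box $FB(q') \subset E$ of $\Fole$ whose size is locally uniform in $\e$; the sets $\Uq = FB(q') \cap \Aoneprimeq$ then give an $\e$-independent open cover. That $\phi_{q',\e}$ is a diffeomorphism onto $\Disc$ is immediate from property~4: the stated formula $\phi_{q',\e}^{-1}(\zeta) = \betaq(\zeta, \alphaq(\zeta))$ is a genuine two-sided smooth inverse, because for $q \in \Uq$ one has $\betaq^{-1}(q) = (\zeta, \alphaq(\zeta))$ with $\zeta = pr_1 \circ \betaq^{-1}(q) = \phi_{q',\e}(q)$.

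The substance of the lemma is the transition maps. Unwinding the definitions, $\phi_{q',\e}(q)$ is exactly the coordinate, in the biholomorphic slice parametrisation $\zeta \mapsto \betaq(\zeta,0)$ of the complex transversal $T_{q'}$, of the unique point of $T_{q'}$ lying on the same plaque of $\Fole$ as $q$. Hence on an overlap $U_{q'_0} \cap U_{q'_1}$ the transition $\phi_{q'_1,\e} \circ \phi_{q'_0,\e}^{-1}$ is precisely the holonomy of $\Fole$ from $T_{q'_0}$ to $T_{q'_1}$, read through these slice parametrisations. The observation that makes this go through is that, although $\Aoneprimeq$ is only a smooth submanifold and the graph functions $\alphaq$ are merely smooth, neither enters the description: the chart records a point of the \emph{complex} transversal $T_{q'}$, and the passage from $T_{q'_0}$ to $T_{q'_1}$ takes place along the leaves of the \emph{holomorphic} foliation $\Fole$. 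Covering the relevant plaque of the leaf through $q$ by finitely many flow boxes and composing the elementary box-to-box identifications shows the holonomy is locally biholomorphic, and being globally well defined on the overlap it is biholomorphic; this uses the holomorphic dependence of the leaves of $\Fole$ on initial conditions \cite{IY}. Thus $\Atlas_{\e}(\Aoneprimeq)$ is a holomorphic atlas.

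Finally, for the $\e$-dependence I would appeal to the holomorphic rectification (flow-box) theorem with holomorphic parameters: because $\Fole$ is the kernel of a one-form whose coefficients depend holomorphically --- in fact affinely --- on $\e$, the straightening maps $\betaq$ can be chosen holomorphic in $(\zeta,\e) \in \Disc \times D_r(0)$ after a further decrease of $r$ \cite{IY}. Then the holonomies of $\Fole$, and hence the transition functions $\phi_{q'_1,\e} \circ \phi_{q'_0,\e}^{-1}(\zeta)$, are holomorphic jointly in $\zeta$ and $\e$, which is the claimed analytic dependence of the atlas on the parameter. The hard part is exactly this last step together with the bookkeeping of the previous paragraph: one must make certain the non-holomorphic ingredients ($\Aoneprimeq$ itself and the functions $\alphaq$) are genuinely absorbed into the holomorphic holonomy, and one must control the flow-box sizes uniformly in $\e$ so that a single holomorphic family of charts works over a fixed parameter disc.
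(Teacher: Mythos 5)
Your argument is correct and follows exactly the paper's own route: the paper's proof observes that the charts are projections onto the complex transversals $T_{q'}$ along the leaves of $\Fole$, so transition functions are holonomy maps of a holomorphic foliation and inherit holomorphic dependence on both the base point and $\e$ from the analytic dependence of leaves on initial conditions and parameters. You have simply filled in the details the paper defers to \cite{ND1}, \cite{ND2} (the $\e$-uniform flow-box sizes, the diffeomorphism check, the explicit holonomy interpretation), but the key idea is the same.
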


\begin{proof}
The proof is a direct verification that the transition functions
$\phi_{q_1,\e} \circ \phi_{q_2,\e}^{-1}$ of two intersecting
charts are holomorphic in both the coordinate variable and the
parameter $\e$. It comes from the fact that the charts are
basically projections of the open patches $U_{q'} \subset
\Aoneprimeq$ onto the holomorphic cross-sections $T_{q'}$ along
the leaves of the foliation $\Fole$. Therefor the transition
transformations are going to be maps from one holomorphic
cross-section to another following locally the leaves of $\Fole$.
As the leaves depend holomorphically on the initial condition and
the parameter $\e$, we obtain the desired result. For more
details, one can look at \cite{ND1} or \cite{ND2}.
\end{proof}

The choice of complex structure on the surface $\Aoneprimeq$ is
justified by the next lemma. As it turns out, the map $\Pmape$ is
holomorphic in the complex structure $\Atlas_{\e}(\Aoneprimeq).$

\begin{lemma} \label{Lemma Complex Pmap}
The Poincar\'e map $\Pmape : \Azeroprimeq \to \Aoneprimeq$ from
corollary \ref{Corollary Pmaphat descend} associated to the
foliation $\Fole$ is holomorphic in the complex structure defined
by the atlas $\Atlas_{\e}(\Aoneprimeq)$ and depends
complex-analytically on the parameter $\e$.
\end{lemma}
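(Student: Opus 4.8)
The plan is to show that, once read through the charts of the atlas $\Atlas_{\e}(\Aoneprimeq)$, the transformation $\Pmape$ is a finite composition of holonomy maps of the \emph{holomorphic} foliation $\Fole$ between holomorphic transversals, each of which is holomorphic in its coordinate and analytic in $\e$. Since holomorphy is a local property, and since $\Azeroprimeq \subset \Aoneprimeq$ carries the complex structure obtained by restricting $\Atlas_{\e}(\Aoneprimeq)$, it suffices to establish the claim near an arbitrary point. So I would fix $\e_{*} \in D_{r}(0)$ and $q_{*} \in \Azeroprimeq$, and set $q'_{*} = \Pmape(q_{*}) \in \Aoneprimeq$. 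The key starting observation is that, by Lemma \ref{lemma about lifted Pmap} together with Corollary \ref{Corollary Pmaphat descend}, the value $\Pmape(q)$ is the second endpoint of the arc $\delta_{\e}(q) := \Pi\bigl(\hat{\delta}_{\e}(\zhat)\bigr)$, a curve contained in the leaf $\Leafeq$ of $\Fole$, where $\zhat$ is any preimage of $q$ under $\Pi_0$ — the arc being independent of the choice of $\zhat$ because $\Pi \circ \gammahat = \Pi$ for every $\gammahat \in \Gammahat$. Its first endpoint is $q$, and $\delta_{\e}(q)$ depends smoothly on the pair $(q,\e)$.

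Next I would cover the compact arc $\delta_{\e_{*}}(q_{*})$, which lies in a single leaf of the holomorphic foliation $\Fole$, by a finite chain of holomorphic flow-box charts $\beta_{j} : \Disc \times \Disc \to FB_{j}$, $j = 0, \dots, n$, of the kind constructed in Section \ref{Section_complex_structure}, with consecutive boxes overlapping, with $FB_{0}$ the flow-box of $\Aoneprimeq$ at $q_{*}$ and $FB_{n}$ the one at $q'_{*}$, so that $\phi_{q_{*},\e} = pr_{1} \circ \beta_{0}^{-1}$ and $\phi_{q'_{*},\e} = pr_{1} \circ \beta_{n}^{-1}$ on the corresponding open pieces of $\Aoneprimeq$. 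Shrinking the boxes and using the smooth dependence of $\delta_{\e}(q)$ on $(q,\e)$, I would arrange that $\delta_{\e}(q)$ still runs through $FB_{0}, \dots, FB_{n}$ in this order for all $(q,\e)$ in a neighbourhood of $(q_{*},\e_{*})$. In every flow-box $\Fole$ is the trivial foliation by the plaques $\{\zeta\} \times \Disc$, so on the overlap $FB_{j} \cap FB_{j+1}$ the transition map $\beta_{j+1}^{-1} \circ \beta_{j}$ preserves plaques and therefore has the form $(\zeta,\tau) \mapsto (g_{j}(\zeta), G_{j}(\zeta,\tau))$; the leaves of $\Fole$ depend holomorphically on the initial condition and analytically on $\e$, hence each $g_{j}$ is holomorphic in $\zeta$ and analytic in $\e$, exactly as in the proof of Lemma \ref{lemma complex atlas}. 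This $g_{j}$ is the holonomy of $\Fole$ along $\delta_{\e}$ from the $\beta_{j}$-transversal to the $\beta_{j+1}$-transversal, written in the respective first coordinates.

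Finally I would transcribe $\Pmape$ into these end charts. Since $\phi_{q_{*},\e}^{-1}(\zeta)$ lies on the plaque $\{\zeta\} \times \Disc$ of the chart $\beta_{0}$, the leaf of $\Fole$ through it carries the label $\zeta$ in $FB_{0}$; following it along $\delta_{\e}$ through the chain changes its label successively to $g_{0}(\zeta)$, then $g_{1}(g_{0}(\zeta))$, and so on, and the leaf meets $\Aoneprimeq$ inside $FB_{n}$ at the point whose $\phi_{q'_{*},\e}$-coordinate is $g_{n-1} \circ \cdots \circ g_{0}(\zeta)$. Hence $\phi_{q'_{*},\e} \circ \Pmape \circ \phi_{q_{*},\e}^{-1} = g_{n-1} \circ \cdots \circ g_{0}$, a composition of maps holomorphic in $\zeta$ and analytic in $\e$; since $q_{*}$ and $\e_{*}$ were arbitrary, the lemma follows, and the same argument applied to the family makes the dependence on $\e$ complex-analytic. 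The step I expect to be the main obstacle is not the holomorphy itself — in the local flow-box picture every ingredient is manifestly holomorphic — but securing that picture: reconciling the global, covering-space, $\Gammahat$-equivariant construction of $\Pmape$ from Lemma \ref{lemma about lifted Pmap} and Corollary \ref{Corollary Pmaphat descend} with the plain leaf-holonomy of $\Fole$ along $\delta_{\e}(q)$, and using compactness of $\delta_{\e_{*}}(q_{*})$ together with continuity in $(q,\e)$ to make a single finite flow-box chain work uniformly over a full neighbourhood of $(q_{*},\e_{*})$.
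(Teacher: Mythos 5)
Your proposal is correct and follows the same holonomy-along-leaves strategy that the paper uses: read $\Pmape$ in the charts of $\Atlas_{\e}(\Aoneprimeq)$, observe that it becomes a holonomy map of the holomorphic foliation $\Fole$ between complex transversals, and invoke holomorphic dependence of leaves on initial conditions and $\e$. The paper's proof is a brief sketch of exactly this idea, deferring details to \cite{ND1} and \cite{ND2}; you have supplied the flow-box chain, the plaque-preserving transitions, and the compactness argument that the paper leaves implicit, so yours is simply a more fully worked-out version of the same route.
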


\begin{proof}
The idea is based on the heuristic arguments of the previous lemma
\ref{lemma complex atlas}. Restricted to an open patch $U_{q_1}
\subset \Azeroprimeq$, the map $\Pmape$ sends $U_{q_1}$ inside
another open patch $U_{q_2}$. If we look at $\Pmape$ in the two
corresponding coordinate charts, we obtain a map form the
holomorphic cross-section $T_{q_1}$ to the holomorphic
cross-section $T_{q_2}$  by following the "lifts" of the loop
$\delta_0$ on the leaves of the foliation $\Fole$. The leaves
depend holomorphically on the initial condition and the parameter
$\e$, as well as both $T_{q_1}$ and $T_{q_2}$ are complex
segments, so we conclude that the map is as desired. More details
can be found in \cite{ND1} or \cite{ND2}.
\end{proof}

\begin{corollary}\label{Corollary Pmaphat complex structure}
The surface $\Ahatoneprime \subset \cc \times \{q_0\}$ has a
complex atlas
$$\Atlas_{\e}(\Ahatoneprime) = \{
(\Uhatzzero,\Phihatzzero) \, \, : \, \, \hat{z}_0 \in
\Ahatoneprime\},$$ such that the covering map $\Pi_0 :
\Ahatoneprime \to \Aoneprimeq$ is holomorphic with respect to the
complex atlas $\Atlas_{\e}(\Aoneprimeq)$. The new atlas makes the
lifted Poincar\'e map $\Pmaphate$ holomorphic, depending
complex-analytically on $\e.$
\end{corollary}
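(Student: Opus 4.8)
The plan is to obtain the complex atlas on $\Ahatoneprime$ by simply pulling back, along the covering map $\Pi_0$, the atlas $\Atlas_{\e}(\Aoneprimeq)$ produced in Lemma \ref{lemma complex atlas}, and then to read off the holomorphy of $\Pmaphate$ from that of $\Pmape$ (Lemma \ref{Lemma Complex Pmap}) by means of the equivariance relation of Corollary \ref{Corollary Pmaphat descend}. The first observation is that $\Pi_0$ is a local biholomorphism in the relevant sense: since the horizontal plane $\cc \times \{q_0\}$ is $\Gammahat$-invariant (because $q_0 \in S_1 \setminus C_0$, so $D_0^k(q_0) = q_0$ for all $k$), the restriction $\Pi_0 = \Pi|_{\cc \times \{q_0\}} : \cc \times \{q_0\} \to B_{q_0}$ is a covering map with deck group $\Gammahat$ (cf. Lemma \ref{lemma_bundle_topology} and Corollary \ref{transverse surface}), hence in particular a local diffeomorphism, and so is its further restriction $\Pi_0 : \Ahatoneprime \to \Aoneprimeq$. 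For each point $\hat z_0 \in \Ahatoneprime$ I would pick a chart $(U_0,\phi_0) \in \Atlas_{\e}(\Aoneprimeq)$ around $\Pi_0(\hat z_0)$ together with an open neighborhood $\Uhatzzero \ni \hat z_0$ that $\Pi_0$ maps diffeomorphically onto an open subset of $U_0$, and set
\[
\Phihatzzero \;:=\; \phi_0 \circ \bigl(\Pi_0|_{\Uhatzzero}\bigr) \;:\; \Uhatzzero \longrightarrow \Disc .
\]
The collection $\Atlas_{\e}(\Ahatoneprime) = \{(\Uhatzzero,\Phihatzzero)\ :\ \hat z_0 \in \Ahatoneprime\}$ is the candidate atlas.

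The second step is the routine verification that this is a holomorphic atlas with the stated properties. On the overlap of two such charts $(\Uhatzzero,\Phihatzzero)$ and $(\Uhatzone,\Phihatzone)$ (coming from downstairs charts $(U_0,\phi_0)$ and $(U_1,\phi_1)$), the map $\Pi_0$ is injective, so the transition function is
\[
\Phihatzone \circ \Phihatzzero^{-1} \;=\; \phi_1 \circ \Pi_0 \circ \Pi_0^{-1} \circ \phi_0^{-1} \;=\; \phi_1 \circ \phi_0^{-1},
\]
which is a transition function of $\Atlas_{\e}(\Aoneprimeq)$ and hence holomorphic, with complex-analytic dependence on $\e$, by Lemma \ref{lemma complex atlas}. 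In these charts $\Pi_0$ is read off as the identity, so $\Pi_0 : \Ahatoneprime \to \Aoneprimeq$ is holomorphic with respect to $\Atlas_{\e}(\Aoneprimeq)$. Since $A'_0 \subset A'_1$ we have $\Ahatzeroprime \subset \Ahatoneprime$, so the restriction of $\Atlas_{\e}(\Ahatoneprime)$ endows $\Ahatzeroprime$ with a compatible complex structure for which $\Pi_0 : \Ahatzeroprime \to \Azeroprimeq$ is holomorphic.

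Finally, I would deduce the holomorphy of the lifted Poincar\'e map. By Corollary \ref{Corollary Pmaphat descend} we have $\Pi_0 \circ \Pmaphate = \Pmape \circ \Pi_0$ on $\Ahatzeroprime$. Given $\hat z \in \Ahatzeroprime$, choose local inverse branches of $\Pi_0$ near $\hat z$ and near $\Pmaphate(\hat z)$, shrinking the first neighborhood (by continuity of $\Pmaphate$) so that $\Pmaphate$ carries it into the second; then locally
\[
\Pmaphate \;=\; (\Pi_0)^{-1} \circ \Pmape \circ \Pi_0 ,
\]
and each factor is holomorphic and complex-analytic in $\e$: the maps $\Pi_0$ and its local inverse by the previous paragraph (note $\Pi_0$ itself carries no $\e$-dependence, only the complex structure on its target moves), and $\Pmape$ by Lemma \ref{Lemma Complex Pmap}. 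Hence $\Pmaphate$ is holomorphic in the atlas $\Atlas_{\e}(\Ahatoneprime)$ and depends complex-analytically on $\e$, which is the claim.

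I do not expect a genuine obstacle: the whole argument is an exercise in transporting structure along a covering. The one point requiring care is the interpretation of ``complex-analytic dependence on $\e$'': the charts of $\Atlas_{\e}(\Ahatoneprime)$ themselves vary with $\e$ through $\Atlas_{\e}(\Aoneprimeq)$, so this statement must be understood relative to those moving charts, exactly as for $\Pmape$ in Lemma \ref{Lemma Complex Pmap}; once this is granted, the composition formula above transfers the property verbatim from $\Pmape$ to $\Pmaphate$.
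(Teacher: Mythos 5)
Your proposal is correct and matches the paper's own (very terse) proof: the paper simply says to pull back the complex structure along the local diffeomorphism $\Pi_0$, and you have carried out exactly that construction, with the transition-function verification and the use of the equivariance relation from Corollary \ref{Corollary Pmaphat descend} spelled out explicitly where the paper leaves them implicit.
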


\begin{proof}
Since as a smooth covering map $\Pi_0$ is a local diffeomorphism,
simply pull back the complex structure given by
$\Atlas_{\e}(\Aoneprimeq)$ to the surface $\Ahatoneprime$.
\end{proof}

\section{Periodic orbits and complex cycles}

We proceed with the study of the Poincar\'e maps $\Pmape$ and
$\Pmaphate.$ More precisely, we are interested in the relationship
between their periodic orbits and the marked complex cycles of the
perturbed foliation $\Fole.$

\begin{lemma} \label{Lemma_periodic_orbits_cycles_lifted}
Let $r>0$ be the radius obtained in lemma \ref{lemma about lifted
Pmap} and let $\e \in D_{r}(0)$ be fixed. Then, the following
statements are true: \vspace{1.5mm}

\noindent {\em 1.} If $\Pmaphate : \Ahatzeroprime \to
\Ahatoneprime$ has an $m-$periodic orbit $\zhat_1,..., \zhat_m$ in
$\Ahatzeroprime$, then $\Pmape : \Azeroprimeq \to \Aoneprimeq$ has
an $m-$periodic orbit $q_1,..., q_m$ in $\Azeroprimeq$, where
$\Pi_0(\zhat_j) = q_j$ for $j = 1,..., m$.\vspace{1.5mm}

\noindent {\em 2.} Moreover, for each $j = 1,..., m$ the foliation
$\Fole$ has a marked complex cycle $(\Delta_j, q_j)$ with an
$m-$fold vertical representative $\delta_j$ contained in $E'_1 =
H^{-1}(A'_1)$. \vspace{1.5mm}

\noindent {\em 3.} Finally, for $j = 1,...,m$ the cycle
$(\Delta_j, q_j)$ is $m-$fold vertical, i.e. every representative
of the cycle is free homotopic in $E$ to $\delta_0^m$.
\end{lemma}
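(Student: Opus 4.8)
The plan is to establish the three numbered statements in order, since each builds on the previous one. For statement 1, the key point is that $\Pi_0 : \Ahatzeroprime \to \Azeroprimeq$ is a covering map that intertwines $\Pmaphate$ with $\Pmape$ by Corollary~\ref{Corollary Pmaphat descend}, i.e. $\Pi_0 \circ \Pmaphate = \Pmape \circ \Pi_0$. First I would set $q_j = \Pi_0(\zhat_j)$ and check that $\Pmape(q_j) = q_{j+1}$ (indices mod $m$) directly from the intertwining relation. The only thing requiring a small argument is that the $q_j$ are genuinely distinct, i.e. that the orbit really has period $m$ and not a proper divisor; this follows because if $q_j = q_k$ for some $j \neq k$, then $\zhat_j$ and $\zhat_k$ lie in the same fiber of $\Pi_0$, hence differ by a deck transformation $\gammahat \in \Gammahat$, and using the equivariance $\gammahat \circ \Pmaphate = \Pmaphate \circ \gammahat$ from Corollary~\ref{Corollary Pmaphat extension} one pushes this coincidence around the orbit and contradicts the assumption that $\zhat_1,\dots,\zhat_m$ is an honest $m$-periodic orbit.

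For statement 2, I would construct the representative $\delta_j$ by concatenating the lifted arcs from the proof of Lemma~\ref{lemma about lifted Pmap}. Starting at $\zhat_j \in \Ahatzeroprime$, the lift $\hat\delta_\e(\zhat_j)$ of $\delta_0$ on the leaf $\Leafhate(\zhat_j)$ runs from $\zhat_j$ to $\Pmaphate(\zhat_j) = \zhat_{j+1}$ inside $\pi^{-1}(A'_1) \times S_1$; concatenating $\hat\delta_\e(\zhat_j), \hat\delta_\e(\zhat_{j+1}), \dots, \hat\delta_\e(\zhat_{j+m-1})$ gives a loop $\hat\delta_j$ based at $\zhat_j$ lying on a single leaf of $\Folehat$ and contained in $\pi^{-1}(A'_1) \times S_1$. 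Pushing down by $\Pi$ yields a loop $\delta_j = \Pi(\hat\delta_j)$ based at $q_j$, lying on the leaf $\Leafe(q_j)$ of $\Fole$, and contained in $\Pi(\pi^{-1}(A'_1)\times S_1) = E'_1 = H^{-1}(A'_1)$. Its homotopy class $\Delta_j$ on that leaf is nontrivial because $pr_{S_1}$ maps $\hat\delta_j$ onto $\delta_0^m$ (each arc projects to $\delta_0$ since $D_0$ fixes $\delta_0$, as in Lemma~\ref{lemma about lifted Pmap}), and $\delta_0^m$ is nontrivial in $\pi_1(S_1) \cong \Integers$; hence $(\Delta_j, q_j)$ is a marked complex cycle. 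That its representative $\delta_j$ covers $\delta_0$ exactly $m$ times makes it $m$-fold vertical by Definition~\ref{Definition_delta_m_fold_vertical} once we check free homotopy to $\delta_0^m$ inside $E$, which is statement 3.

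For statement 3, the task is to show $\delta_j$ is free homotopic in $E$ to $\delta_0^m$. I would argue via the covering $\Pi : \CtimesS \to E$: the loop $\hat\delta_j$ projects under $pr_{S_1}$ to $\delta_0^m$, and since the horizontal plane $\cc \times \{q_0\}$ is $\Gammahat$-invariant (because $D_0$ fixes $q_0 \in S_1 \setminus C_0$), the path $\hat\delta_j$ can be freely homotoped within $\pi^{-1}(A'_1)\times S_1$ to a loop in a horizontal leaf of the trivial product, which projects under $pr_{S_1}$ to the same class $\delta_0^m$; pushing this homotopy down by $\Pi$ gives a free homotopy in $E'_1 \subset E$ between $\delta_j$ and (a loop representing) $\delta_0^m$. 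Finally, since all representatives of a marked cycle of $\Fole$ are homotopic on a single leaf, which sits inside $E$, the argument preceding Definition~\ref{Definition_delta_m_fold_vertical} shows every representative is $m$-fold vertical; combined with Proposition~\ref{proposition_m=k}, the integer $m$ is the well-defined verticality index. The main obstacle I anticipate is bookkeeping the distinctness of the $q_j$ in statement 1 and making the free-homotopy construction in statement 3 precise enough that it genuinely lands in $E$ and not merely in $\CC$ — the key leverage in both cases is the special structure of $\Gammahat$ from Lemma~\ref{lemma_bundle_topology}, namely that deck transformations preserve horizontal fibers over points of $S_1 \setminus C_0$.
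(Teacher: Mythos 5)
Your overall strategy is the same as the paper's: use the conjugacy $\Pi_0 \circ \Pmaphate = \Pmape \circ \Pi_0$ and the freeness of the $\Gammahat$-action to rule out a shorter period in part 1; concatenate the lifted arcs $\hat\delta_{\e}(\hat z_j, \hat z_{j+1})$ into a single loop and push it down by $\Pi$ in part 2; and produce a free homotopy upstairs in $\CtimesS$ and transport it by $\Pi$ for part 3. Parts 1 and 2 are fine (for part 1 the paper iterates $\Pmaphate^{jk}(\hat z_1) = \gammahat^j(\hat z_1)$ and applies torsion-freeness, which is the same equivariance trick you describe; for part 2 your direct argument that $pr_{S_1}$ would push a null-homotopy on the leaf to a null-homotopy of $\delta_0^m$ in $S_1$ is a clean substitute for the paper's citation to Petrovskii--Landis).

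Part 3, however, is garbled as written. You say the loop $\hat\delta_j$ can be ``freely homotoped $\dots$ to a loop in a horizontal leaf of the trivial product, which projects under $pr_{S_1}$ to $\delta_0^m$,'' justified by the $\Gammahat$-invariance of $\cc \times \{q_0\}$. But a horizontal leaf is $\cc \times \{p\}$; any loop lying in it projects under $pr_{S_1}$ to a constant, not to $\delta_0^m$, and in any case $\Gammahat$-invariance of horizontal fibers has no bearing on free homotopy. What you need (and what the paper uses) is the deformation retraction $Pr_{S_1}(z,p) = (0,p)$ of $\CtimesS$ onto the \emph{vertical} fiber $\{0\} \times S_1$: since $pr_{S_1}(\hat\delta_j) = \delta_0^m$, this retraction exhibits $\hat\delta_j$ as free homotopic in $\CtimesS$ to $\{0\} \times \delta_0^m$, and $\Pi$ carries that free homotopy into $E$. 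Your final sentence (citing the discussion preceding Definition~\ref{Definition_delta_m_fold_vertical} and Proposition~\ref{proposition_m=k}) is then the same wrap-up the paper gives for point~3.
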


\begin{proof}
We start with the proof of the first part. Recall that
$$\Pi_0 = \Pi|_{\Ahatoneprime} \,\, : \,\, \Ahatoneprime \, \longrightarrow \, \Aoneprimeq$$ is
the universal covering map from the band $\Ahatoneprime$ to the
cylinder $\Aoneprimeq$ embedded in $E \subset \CC$. Let $q_j =
\Pi_0(\zhat_j) \in \Aoneprimeq$ for $j=1,..,m$. Due to the
conjugacy relation $\Pmape \circ \Pi_0 = \Pi_0 \circ \Pmaphate$,
the image $\{q_j\}_{j=1}^{m}$ of the $m-$periodic orbit
$\{\zhat_j\}_{j=1}^{m}$ of the map $\Pmaphate : \Ahatzeroprime \to
\Ahatoneprime$ is a periodic orbit of $\Pmape : \Azeroprimeq \to
\Aoneprimeq$ with possibly a smaller period. Clearly,
$\Pmape^m(q_1)=\Pmape^m(\Pi_0(\zhat_1))=\Pi_0 \circ
\Pmaphate^m(\zhat_1)=\Pi_0(\zhat_1)=q_1.$

Assume there exists a smaller $k < m$ such that $q_1=q_{k+1}$.
Since $\Pi_0 : \Ahatoneprime \to \Aoneprimeq$ is a covering map
and $\Gammahat$ restricted to $\Ahatoneprime$ is the deck group of
$\Pi_0$, there exists $\gammahat \in \Gammahat$ such that
$\zhat_{k+1}=\gammahat(\zhat_1)$. On the other hand,
$\zhat_{k+1}=\Pmaphate^k(\zhat_1).$ Thus,
$\Pmaphate^k(\zhat_1)=\gammahat(\zhat_1).$ Applying $\Pmaphate^k$
to the last equality we obtain
\begin{align*}
 \Pmaphate^{2k}(\zhat_1)&=\Pmaphate^k \circ
 \gammahat(\zhat_1)\\
 &=\gammahat \circ \Pmaphate^k(\zhat_1)\\
 &=\gammahat^2(\zhat_1).
\end{align*}
In general, $\Pmaphate^{jk}(\zhat_1)=\gammahat^j(\zhat_1)$ for any
$j \in \Naturals$. In particular, when $j=m$ we have
$\zhat_1=\Pmaphate^{mk}(\zhat_1)=\gammahat^m(\zhat_1)=\gammahat^m(\zhat_1).$
The identity $\zhat_1=\gammahat^m(\zhat_1)$ implies that
$\gammahat^m$ has a fixed point in $\Ahatoneprime$. As a deck
group of the universal covering, $\Gammahat \cong \Integers$ acts
freely on $\Ahatoneprime$, so $\gammahat^m=id_{\Ahatoneprime}$ and
since $\Gammahat$ has no torsion, $\gammahat=id_{\Ahatoneprime}.$
We reach the conclusion $\zhat_{k+1}=\zhat_1$ which is not true.
This concludes the proof of part one.

Next, we show that the second part holds. For convenience, let
$\zhat_{m+1}=\zhat_1.$ Observe that since all the points
$\zhat_1,..., \zhat_m$ belong to the same orbit of $\Pmaphate$,
they lie on the same leaf $\Leafhate(\zhat_1)$ from the foliation
$\Fole.$ Let $\deltahat_{\e}(\zhat_j,\zhat_{j+1}),$ for
$j=1,...,m,$ be the lift of $\delta_0$ on the leaf
$\Leafhate(\zhat_1)$ so that the path
$\deltahat_{\e}(\zhat_j,\zhat_{j+1})$ covers $\delta_0$ under the
projection $pr_{S_1}$ and connects the points $\zhat_j$ and
$\zhat_{j+1}.$ By the construction of the map $\Pmaphate$ in the
proof of lemma \ref{lemma about lifted Pmap}, all paths
$\deltahat_{\e}(\zhat_j,\zhat_{j+1})$ are contained in
$\pi^{-1}(A'_1) \times S_1.$ Therefore, the path $\deltahat_{\e} =
\cup_{j=1}^{m} \deltahat_{\e}(\zhat_j,\zhat_{j+1})$ is contained
in $\pi^{-1}(A'_1) \times S_1$ and  goes through the points
$\zhat_1,...,\zhat_m.$ Moreover, if we parametrize
$\deltahat_{\e}$ from $\zhat_1$ to $\zhat_m$ in a direction
induced by the orientation of $\delta_0$, its two endpoints are
$\zhat_1$ and $\zhat_{m+1}=\zhat_1$, so in fact $\deltahat_{\e}$
is a loop on the leaf $\Leafhate(\zhat_1)$.

For each $j=1,...,m$ denote by $\deltahat_j$ the closed curve
$\deltahat_{\e}$ but assuming that it starts form the point
$\zhat_j$. As point-sets all loops $\deltahat_j$ are the same
$\deltahat_{\e}$. The only difference between them is that the
parametrization for each $\deltahat_j$ starts from a different
point from the periodic orbit of $\Pmaphate$.

Fix some $j = 1,..., m$. When mapping $\deltahat_j$ with $\Pi$
back onto $E$ we obtain a loop $\delta_j = \Pi(\deltahat_j)$ lying
on the leaf $\Leafe(\Pi(\zhat_j)) =\Pi(\Leafhate(\zhat_j))$ from
the foliation $\Fol.$ Moreover, $\delta_j$ is contained in
$E'_1=\Pi(\pi^{-1}(A'_1) \times S_1).$ As discussed in \cite{PL55}
and \cite{PL57}, the loop $\delta_j$ is non trivial on
$\Leafhate(\zhat_j)$ and defines a marked complex cycle
$(\Delta_j,q_j).$

Let $Pr_{S_1} : \CtimesS \to \{0\}\times S_1$ be the map
$Pr_{S_1}(z,p) = (0,p)$. Having in mind that $\cc$ is
contractible, $Pr_{S_1}$ is a deformation retraction, so clearly
each $\deltahat_j$ is free homotopic in $\CtimesS$ to $\{0\}
\times \delta_0^m$. As $\Pi(\CtimesS) = E$, the map $\Pi$ sends
this free homotopy to a free homotopy in $E$ between $\delta_j =
\Pi(\deltahat_j)$ and $\delta_0^m = \Pi(0,\delta_0^m)$. Hence all
loops $\delta_j$ are $m-$fold vertical.

The third point of the current theorem follows directly from
definition \ref{Definition_delta_m_fold_vertical} and the
discussion in the paragraph right after it.
\end{proof}

\begin{lemma}\label{Lemma peridoic orbits and families and connection with cycles}
Let $\e'$ belong to the parameter disc $D_r(0),$ where the radius
$r>0$ is chosen as in lemma \ref{lemma about lifted Pmap}.
\vspace{1mm}

\noindent{1.} If $\Phat_{\e'} : \Ahatzeroprime \to \Ahatoneprime$
has an isolated $m-$periodic orbit $\{\zhat_j\}_{j=1}^{m}$
contained in $\Ahatzeroprime$ then $P_{\e'} : \Azeroprimeq \to
\Aoneprimeq$ has an isolated $m-$periodic orbit
$\{q_j\}_{j=1}^{m}$ in $\Azeroprimeq$, where $\Pi_0(\zhat_j) =
q_j$ for $j = 1,..., m$.\vspace{1mm}

\noindent{2.} Moreover, there exists a disk $D_{r'}(\e') \subset
D_r(0)$ with a  small radius $r^{\prime}>0$ such that for any
embedded in $D_{r'}(\e')$ curve $\eta',$ passing through $\e',$
there exists a continuous family
$\bigl(\zhat_1(\e),...,\zhat_m(\e)\bigr)_{\e \in \eta'}$ of
periodic orbits for the map $\Pmaphate$ which for $\e=\e'$ becomes
$\zhat_1,...,\zhat_m.$ Furthermore, this continuous family is
mapped by $\Pi_0$ to a continuous family of periodic orbits
$(q_1(\e),...,q_m(\e))_{\e \in \eta'}$ for the transformation
$\Pmape$ which for $\e=\e'$ becomes the orbit $q_1,...,q_m.$
\vspace{1mm}

\noindent{3.} If $\Pmaphate$ has a continuous family of periodic
orbits $\{\zhat_j(\e)\}_{j=1}^{m}$ on $\Ahatzeroprime$ for $\e$
varying on some curve $\etatilde$ embedded in $D_r(0),$ then the
perturbed foliation $\Fole$ has continuous families of marked
cycles $\{(\Delta_j(\e),q_j(\e))\}_{\e \in \etatilde}$ for
$j=1,...,m$ where $q_j(\e) = \Pi_0(\zhat_{\e})$. 
\end{lemma}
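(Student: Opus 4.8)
The plan is to dispatch parts~1 and~3 by leaning on what has already been proved and to concentrate the genuine work on part~2. For part~1, the existence of the orbit $q_1,\dots,q_m$ with $\Pi_0(\zhat_j)=q_j$ is exactly part~1 of lemma~\ref{Lemma_periodic_orbits_cycles_lifted}, so the only new point is isolatedness, which I would transport through the covering map $\Pi_0 : \Ahatoneprime \to \Aoneprimeq$, a local homeomorphism. If some $q_j$ failed to be an isolated fixed point of $P_{\e'}^m$, choose fixed points $q^{(n)}\to q_j$ of $P_{\e'}^m$ with $q^{(n)}\neq q_j$; for large $n$ each $q^{(n)}$ lies in an evenly covered neighbourhood of $q_j$ and so lifts uniquely to $\zhat^{(n)}\to\zhat_j$. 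The conjugacy $\Pi_0\circ\Pmaphate=\Pmape\circ\Pi_0$ of corollary~\ref{Corollary Pmaphat descend} (for the parameter $\e'$) forces $\Phat_{\e'}^m(\zhat^{(n)})$ and $\zhat^{(n)}$ to lie in a common $\Pi_0$-fibre; since $\Phat_{\e'}$ is close to the identity (lemma~\ref{lemma about lifted Pmap}), for $n$ large both points sit in a neighbourhood of $\zhat_j$ on which $\Pi_0$ is injective, whence $\Phat_{\e'}^m(\zhat^{(n)})=\zhat^{(n)}$, contradicting isolatedness of $\zhat_j$ for $\Phat_{\e'}^m$.

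For part~2 the crucial input is corollary~\ref{Corollary Pmaphat complex structure}: $\Pmaphate$ is holomorphic in the atlas $\Atlas_\e(\Ahatoneprime)$ and depends holomorphically on $\e$, so near the periodic orbit $\Pmaphate^m$ is holomorphic and $\zhat_1$ is an isolated zero of $\Phat_{\e'}^m(\zhat)-\zhat$, hence a zero of some finite positive multiplicity $d_1$. A Rouch\'e estimate on a small circle $|\zhat-\zhat_1|=\rho$ on which $\Phat_{\e'}^m(\zhat)-\zhat$ does not vanish, in the spirit of the arguments of lemma~\ref{Lemma Example Periodic Orbit Existence}, then produces a disc $D_{r'}(\e')$ so small that for every $\e\in D_{r'}(\e')$ the equation $\Pmaphate^m(\zhat)=\zhat$ has exactly $d_1$ roots (with multiplicity) in $|\zhat-\zhat_1|<\rho$, these roots varying continuously with $\e$ in the sense that a continuous branch can be chosen along any arc. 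Given an embedded curve $\eta'\subset D_{r'}(\e')$ through $\e'$, I would select such a continuous branch $\zhat_1(\e)$ with $\zhat_1(\e')=\zhat_1$ and then put $\zhat_{j+1}(\e):=\Pmaphate^{\,j}(\zhat_1(\e))$ for $j=1,\dots,m-1$. Continuity keeps each $\zhat_j(\e)$ near $\zhat_j$, so after shrinking $r'$ the points $\zhat_1(\e),\dots,\zhat_m(\e)$ stay pairwise distinct and form a genuine $m$-periodic orbit of $\Pmaphate$; pushing this tuple down by $\Pi_0$ and using the conjugacy relation together with the distinctness argument of part~1 yields the continuous family $\bigl(q_1(\e),\dots,q_m(\e)\bigr)_{\e\in\eta'}$ of $m$-periodic orbits of $\Pmape$, which reduces to $q_1,\dots,q_m$ at $\e=\e'$.

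For part~3 I would revisit the construction inside the proof of lemma~\ref{Lemma_periodic_orbits_cycles_lifted}: for each $\e\in\etatilde$ a representative $\delta_j(\e)=\Pi(\deltahat_j(\e))$ of the marked cycle $(\Delta_j(\e),q_j(\e))$ is obtained by lifting $\delta_0$ along the leaves of $\Folehat$ through the orbit points $\zhat_k(\e)$, concatenating the lifts into a loop, and projecting by $\Pi$. By continuous dependence of $\Folehat$, hence of those lifts, on the parameter $\e$ and on initial conditions, the loops $\deltahat_j(\e)$, and therefore the $\delta_j(\e)$, form continuous families of loops in $E$; each $\delta_j(\e)$ represents $\Delta_j(\e)$ and has base point $q_j(\e)=\Pi_0(\zhat_j(\e))$, which depends continuously on $\e$ since both $\Pi_0$ and $\zhat_j(\e)$ do. This checks the two defining conditions of a continuous family of marked cycles.

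The step I expect to be the main obstacle is part~2: although holomorphy guarantees that the isolated fixed point $\zhat_1$ of $\Phat_{\e'}^m$ persists under perturbation, the bookkeeping is delicate, since the complex charts of $\Ahatoneprime$ themselves move with $\e$, and the branch $\zhat_1(\e)$ tracked along $\eta'$ is in general only one sheet of a $d_1$-valued algebraic function, so one must ensure that a single continuous sheet can be followed along the whole curve while the entire orbit $\{\zhat_j(\e)\}$ stays distinct, i.e. of period exactly $m$, at every~$\e$.
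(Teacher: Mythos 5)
Your proposal follows essentially the same route as the paper. Part~1 is handled in the paper by one sentence (``as $\Pi_0$ is locally a diffeomorphism, the isolatedness of $\{\zhat_j\}$ implies the isolatedness of $\{q_j\}$''), and your lift-and-inject argument is precisely the unwinding of that sentence. Part~3 in the paper is likewise dispatched by citing Lemma~\ref{Lemma_periodic_orbits_cycles_lifted} together with continuous dependence of leaves on $\e$ and on initial data, exactly as you do.

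The only genuine difference is in part~2. The paper works in a chart $(\Uhatzone,\Phihatzone)$ from $\Atlas_\e(\Ahatoneprime)$, forms $\Ftilde(\zeta,\e)=\Phat^{(m)}_\e(\zeta)-\zeta$, invokes Hartogs' theorem for joint holomorphy, and then applies the \emph{Weierstrass preparation theorem} to write $\Ftilde(\zeta,\e)=\prod_{j=1}^s(\zeta-\alpha_j(\e))\,\theta(\zeta,\e)$, after which the branch $\alpha_1(\e)$ is tracked along $\eta'$. You reach the same conclusion via a \emph{Rouch\'e/argument-principle} persistence argument: the isolated zero of $\Phat^m_{\e'}(\zhat)-\zhat$ has finite multiplicity $d_1$, so for $\e$ near $\e'$ there are exactly $d_1$ zeros in a fixed small disc, varying continuously, from which a branch is selected along $\eta'$. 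These are two standard packagings of the same local analytic fact; Weierstrass preparation just makes the $\alpha_j$ explicit, while Rouch\'e is the more elementary route. You also spell out two small points the paper leaves implicit: that after shrinking $r'$ the points $\zhat_1(\e),\dots,\zhat_m(\e)$ remain pairwise distinct (so the period is exactly $m$), and the covering-map bookkeeping in part~1. Both are correct and welcome, but they do not change the structure of the argument.
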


\begin{proof}
Point one follows from lemma
\ref{Lemma_periodic_orbits_cycles_lifted} together with the
conjugacy condition $\Pi_0 \circ \Phat_{\e'} = P_{\e'}
\circ\Pi_0$. As $\Pi_0$ is locally a diffeomorphism, then the fact
that $\{\zhat_j\}_{j=2}^{m}$ is isolated implies that
$\{q_j\}_{j=1}^{m}$ is isolated too.

As $\Phat_{\e'}^m(\zhat_1)=\zhat_1,$ we choose a chart
$(\Uhatzone,\Phihatzone)$ form the atlas
$\Atlas_{\e}(\Ahatoneprime)$ around the point $\zhat_1$ and a
smaller neighborhood $\Uhatzone'$ of the same point such that
$\Uhatzone^{\prime} \subset \Uhatzone$ and
$\Phat_{\e'}^m(\Uhatzone^{\prime}) \subset \Uhatzone.$ Let
$D^{\prime}=\Phihatzoneprime(\Uhatzone^{\prime}) \subset \Disc$
where $\Phihatzoneprime(\zhat_1)=0 \in D^{\prime}.$ If
$r^{\prime}>0$ is chosen small enough, then $$\Phat^{(m)}_{\e} =
\Phihatzone \circ \Pmaphate^m \circ \Phihatzone^{-1} \, : \,
D^{\prime} \, \longrightarrow \, \Disc$$ for $\e \in
D_{r^{\prime}}(\e') \subset D_r(0).$ Notice that
$\Phat^{(m)}_{\e'}(0)=0.$ The complex valued function
$$\tilde{F} : D^{\prime} \times D_{r'}(\e') \to \cc \,\,\, \text{defined as}\,\,\,
\tilde{F}(\zeta,\e) = \Phat^{(m)}_{\e}(\zeta)-\zeta$$ is
holomorphic with respect to $\zeta \in D^{\prime}$ and with
respect to $\e \in D_{r^{\prime}}(\e').$  By Hartogs' theorem
\cite{GR}, $\tilde{F}$ is holomorphic with respect to $(\zeta,\e)
\in D^{\prime} \times D_{r^{\prime}}(\e').$ Since
$\Phat^{(m)}_{\e'}(0)=0,$ the point $(0,\e')$ is a zero of $\tilde
F,$ that is $\tilde F(0,\e')=0.$

Let us look at the zero locus of $\tilde F$ in $D^{\prime} \times
D_{r^{\prime}}(\e').$ The fact that the periodic orbit
$\{\zhat_j\}_{j=1}^{m}$ is isolated means that $\zhat_1$ is an
isolated fixed point for the map $\Phat_{\e'}^m$. Therefore $0$ is
an isolated fixed point for $\Phat^{(m)}_{\e'}$ and thus, it is an
isolated zero for the holomorphic function $\Ftilde(\zeta,\e')$
regarded as a function of $\zeta$ only. By Weierstrass'
preparation theorem \cite{GR}, \cite{Ch}, we can write
$$\tilde F(\zeta,\e)=\prod_{j=1}^{s}(\zeta -
\alpha_j(\e))\theta(\zeta,\e),$$ where $\theta(0,\e') \neq 0$ and
$\{\alpha_j(\e) \, : \, j=1,...,s\}$ depend analytically on $\e
\in D_{r'}(\e'),$ satisfying the conditions
$\alpha_1(\e')=...=\alpha_s(\e')=0$ and possibly branching into
each other.

Now, let $\eta'$ be some simple curve embedded in the disc
$D_{r^{\prime}}(\e')$ and passing through $\e'.$ For $\e$
 varying on $\eta'$, we can choose a branch, denoted for simplicity by
$\alpha_1(\e)$. Then the desired continuous family for $\Pmaphate$
can be constructed by setting
$\zhat_1(\e)=\Phihatzone^{-1}(\alpha_1(\e))$ and
$\zhat_{j+1}(\e)=\Pmaphat^{j}(\zhat_1(\e))$ for $j=1,...,m-1.$ Its
image under the covering $\Pi_0$ will provide the continuous
family of periodic orbits $\{q_j(\e)\}_{j=1}^m$ for $\Pmape.$

The third point of the current statement follows directly from
lemma \ref{Lemma_periodic_orbits_cycles_lifted} part two. For each
$\e \in \eta'$ and $j = 1,..., m$ the constructed representative
$\delta_j(\e) \subset E'_1$ depends in fact continuously on the
parameter $\e \in \eta'$ because the leaves of all foliations we
work with depend continuously on the initial points and the
parameter $\e$. Thus the loops $\delta_j(\e)$ and the base points
$q_j(\e)$, continuously depending on $\e$, give rise to continuous
families $\{(\Delta_j(\e), q_j(\e))\}_{\e \in \eta'}$
\end{proof}

\section{Proof of theorem \ref{The_Main_Theorem}}

From now on we assume that $a=a_m$ as provided by lemma \ref{Lemma
Example Periodic Orbit Existence}. Let $\eta$ be an embedded in
$D_r(0)$ curve, connecting $\e_0$ to $0$. For convenience, define
a natural linear order $\preceq$ on it so that $0 \prec \e_0.$ We
begin this section with a summary of the proof.

First, with the help of lemma \ref{Lemma Example Periodic Orbit
Existence}, we show that the Poincar\'e map $\Pmap$ of the
foliation $\Fol$ has an isolated periodic orbit
$\{q_j\}_{j=1}^{m}$ for the fixed parameter $a=a_m$ and $\e_0$
very close to $\e_m \in D_r(0)$. Then, $\{q_j\}_{j=1}^{m}$ can be
extended to a continuous family of periodic orbits
$\{q_j(\e)\}_{j=1}^{m}$ on $\Azeroprimeq$ for values of the
parameter $\e$ defined on an relatively open subset $\etamax$ of
the path $\eta$. We also find out that there exists $\e^* \in
\etamax$ such that if $\e \in \etamax$ and $\e \prec \e^*$ then
some $q_{j_0}(\e) \in \Azeroprimeq \setminus \Azeroq$. We denote
$q_{j_0}(\e) = q(\e)$. By point 3 of lemma \ref{Lemma peridoic
orbits and families and connection with cycles}, there exists a
continuous family of marked cycles $\{(\Delta(\e),q(\e))\}_{\e \in
\etamax}$ defined on $\etamax$. For the value $\e=\e_0$ the cycle
$(\Delta(\e_0),q(\e_0))$ is limit $m-$fold vertical and has a
representative $\delta(\e_0)$ contained in the domain
$E_0=H^{-1}(A_0)$. As $\e$ moves on $\etamax$ towards $0$, it
passes through the point $\e^*$ and as a result of this the point
$q(\e)$ leaves $\Azeroq$ and therefore it leaves $E_0$ as well.
Consequently, for any $\e \in \etamax$ with $\e \prec \e^*$ no
representative of $(\Delta(\e),q(\e))$ is contained in $E_0$
because all of them pass through the same base point $q(\e)$ and
$q(\e)$ is not in $E_0$ anymore. This last fact concludes the
proof of theorem \ref{The_Main_Theorem} with $\sigma = \etamax$.

Next, we continue with the detailed proof of the main result of
this article.
The Poincar\'e map constructed in section
\ref{Section_Local_Poincare_map} is in fact the local Poincar\'e
map from the introduction of the article, defined on the complex
cross-section $T'_{q_0}= \{(x,0) \in \CC \, | \,|x-1| < r'_0\}$
where $r'_0$ is very small. We denote this local map by
$P^{loc}_{\e} : T_{q_0} \to T'_{q_0}$. Notice that $P^{loc}_{\e}$
is constructed using the tubular neighborhood $N(\delta_0)$
together with a projection, we call $\varrho : N(\delta_0) \to
A(\delta_0)$, coming from the direct product structure on
$N(\delta_0)$ selected in section
\ref{Section_Local_Poincare_map}. Since the covering map $\Pi :
\CtimesS \to E$ is a local diffeomorphism, the tubular
neighborhood $N(\delta_0)$ inherits via $\Pi$ the product
structure of $\CtimesS$ together with a projection $\varrho'$
coming from the natural projection $pr_{S_1}$ which maps
$\CtimesS$ onto $S_1$. As already mentioned in the introduction,
there exists an isotopy on $N(\delta_0)$ that keeps $A(\delta_0)$
fixed point-wise and sends one product structure to the other
\cite{Hr}. Therefore, the lifts of $\delta_0$ via $\varrho$ and
$\varrho'$ on any near-by leaf of $\Fole$ will coincide as
point-sets. In conclusion, the map $P^{loc}_{\e}$ is in fact a
representation of the non-local Poincar\'e map $\Pmape$ in a
complex chart from the atlas $\Atlas_{\e}(\Aoneprimeq)$ defined in
section \ref{Section_complex_structure}. Together with this, by
corollary \ref{Corollary Pmaphat complex structure}, the local map
$P^{loc}_{\e}$ on the cross-section $T'_{q_0}$ is a representation
of the lifted Poincar\'e transformation $\Pmaphate$ in a complex
chart from the atlas $\Atlas_{\e}(\Ahatoneprime)$. Combining this
fact with lemma \ref{Lemma Example Periodic Orbit Existence}, we
conclude that for a choice of $\e_0$ near $\e_m \in D_{r}(0)$ the
transformation $\Pmaphate$ has an isolated $m-$ periodic orbit
$\{\zhat_j\}_{j=1}^{m}$. By lemma
\ref{Lemma_periodic_orbits_cycles_lifted}, for $j=1,...,m$ there
are $m-$fold vertical limit cycles $(\Delta_j,q_j)$ each having a
representative $\delta_j$ contained in $E'_1$. Comparing the
construction of $P^{loc}_{\e}$ with that of $\Pmaphate$ and
$\Pmape$ as well as having in mind the fact that the lifts are
independent on the choice of a product structure on $N(\delta_0)$,
one concludes that $\delta_j$ is contained in $N(\delta_0)$ which
is a thin neighborhood of $\delta_0$ and in its own turn is a
subset of $E_0$. Thus, the representative $\delta_j$ of the limit
cycle $(\Delta_j,q_j)$ is an $m-$fold vertical loop contained in
the domain $E_0$ for $j=1,..,m$.

By lemma \ref{Lemma peridoic orbits and families and connection
with cycles}, there exists $D_{r_0}(\e_0) \subset D_r(0)$ for some
$r_0>0,$ such that if $\eta_0=\eta \cap D_{r_0}(\e_0),$ then there
is a continuous family of periodic orbits
$(\zhat_1(\e),...,\zhat_m(\e))_{\e \in \eta_0}$ of the map
$\Pmaphate$ on the cross-section $\Ahatzeroprime.$

Define $\etamax \subseteq \eta$ as the maximal relatively open
subset of $\eta$ on which the continuous family
$(\zhat_1(\e),...,\zhat_m(\e))_{\e \in \etamax}$ of periodic
orbits for $\Pmaphate$ exists on the cross-section
$\Ahatzeroprime$. Since $\eta_0 \neq \varnothing$ is relatively
open in $\eta,$ the inclusion $\eta_0 \subseteq \etamax$ holds and
therefore $\etamax \neq \varnothing.$ By point 3 from lemma
\ref{Lemma peridoic orbits and families and connection with
cycles}, for each $j =1,..,m$ there exists a continuous family of
marked complex cycles $\{(\Delta_j(\e),q_j(\e))\}_{\e \in
\etamax}$ with $q_j(\e)=\Pi(\zhat_j(\e)).$ Near $\e_0 \in \etamax$
the cycles $(\Delta_j(\e),q_j(\e))$ are limit and have $m-$fold
vertical representatives $\delta_j(\e)$ contained in $E_0$ because
when $\e=\e_0$ each cycle $(\Delta_j(\e_0),q_j(\e_0)) =
(\Delta_j,q_j)$ is limit and has an $m-$fold vertical
representative, namely $\delta_j=\delta_j(\e_0),$ contained inside
the domain $E_0.$ We would like to find out what happens to the
cycles as $\e$ varies on $\etamax.$

Let $\eta'$ be the set of all $\e$ from $\etamax$ for which the
periodic orbits from the continuous family
$(\zhat_1(\e),...,\zhat_m(\e))_{\e \in \etamax}$ are entirely
contained in $\Ahatzero \subset \Ahatzeroprime$. As we already
saw, at $\e_0$ the orbit $\zhat_1(\e_0),...,\zhat_m(\e_0)$ is
inside $\Ahatzero$ and by continuity, the orbits
$\zhat_1(\e),...,\zhat_m(\e)$ are also contained in $\Ahatzero$
for $\e$ near $\e_0.$ This fact shows that $\eta' \neq
\varnothing$ and in fact it has a nonempty interior.

Let $\etwostar =\inf_{\eta}(\etamax)$ be the infimum of $\etamax$
with respect to the linear ordering on $\eta$. Then,
$D_{\frac{1}{N}}(\e^*) \cap \etamax \neq \varnothing$ for all $N
\in \Naturals.$ Similarly, define $\e^*=\inf_{\eta}(\eta')$ as the
infimum of $\eta'.$ The inclusion $\eta' \subseteq \etamax$
implies that $\etwostar \preceq \e^*.$ We are going to show that
$\etwostar \neq \e^*$.

Assume $\etwostar=\e^*,$ that is for all $N \in \Naturals$ there
exists $\e_N \in D_{\frac{1}{N}}(\etwostar) \cap \etamax$ such
that $\zhat_1(\e_N),...,\zhat_m(\e_N)$ is contained in
$\Ahatzero$. As explained in point 2 of lemma \ref{Lemma peridoic
orbits and families and connection with cycles} the family of
periodic orbits $(\zhat_1(\e),...,\zhat_m(\e))_{\e \in \etamax}$
is mapped by $\Pi_0$ to a periodic family
$(q_1(\e),...,q_m(\e))_{\e \in \etamax}$ of the map $\Pmape$ on
the surface $\Azeroprimeq.$ Moreover, the corresponding orbits
$q_1(\e_N),...,q_m(\e_N)$ are inside $\Azeroq \subset
\Azeroprimeq$ for all $N \in \Naturals.$ In particular, the
sequence $\{q_1(\e_N)\}_{N \in \Naturals}$ is contained in the
compact cylinder $\overline{\Azeroq}.$ Then, there exists $q_1^{*}
\in \overline{\Azeroq}$ and a subsequence $\{q_1(\e_n)\}_{n \in
\Naturals}$ such that $\lim_{n \to \infty} q_1(\e_n)=q_1^*$ and
$\lim_{n \to \infty} \e_n=\etwostar.$ By continuity, the identity
$P_{\e_n}^m(q_1(\e_n))=q_1(\e_n)$ converges to
$P^m_{\etwostar}(q_1^*)=q_1^*$ as $n \to \infty.$ Generate a
periodic orbit $q_1^*,...,q_m^*$ by setting
$q_{j+1}^*=P^j_{\etwostar}(q_1^*)$ for $j=1,...,m-1.$ Since
$q_{j+1}(\e_n)=P^j_{\e_n}(q_1(\e_n))$, the limit for each
$q_j(\e_n) \in \Azeroq$ is $q_j^* \in \overline\Azeroq$ as $n \to
\infty.$ Thus, the periodic orbit $q_1^*,...,q_m^*$ is the limit
of the periodic orbits $q_1(\e_n),...,q_m(\e_n)$ and is contained
in $\overline{\Azeroq}$.

We will show that under the current assumptions $\etwostar =0$.
Assume that $\e^{**} \neq 0$. Then $\{\e \in \eta \, | \, \e \prec
\etwostar\} \neq \varnothing.$ We proceed as in the proof of lemma
\ref{Lemma peridoic orbits and families and connection with
cycles}. The point $q_1^* \in \overline{\Azeroq}$ is fixed by the
map $P^m_{\etwostar}$. Take a complex chart $(U_{q_1^*},
\phi_{q^{*}_1,\etwostar})$ form the atlas
$\Atlas_{\etwostar}(\Aoneprimeq)$ around $q_1^*$ and a smaller
neighborhood $U'_{q_1^*} \subset U_{q_1^*}$ of the same point such
that $P^m_{\etwostar}(U'_{q_1^*}) \subset U_{q_1^*}$. Let
$D^{\prime}=\phi_{q^*_1,\etwostar}(U'_{q_1^*,\etwostar}) \subset
\Disc$ where $\phi_{q^{*}_1,\etwostar}(q_1^*)=0 \in D^{\prime}$.
Choose $r^*>0$ small enough such that
$$P^{(m)}_{\e} = \phi_{q^*_1,\e} \circ \Pmape^m \circ \phi_{q^{*}_1,\e}^{-1}
\, : \, D^{\prime} \, \longrightarrow \Disc$$ for $\e \in
D_{r^*}(\etwostar) \subset D_r(0).$ Notice that
$P^{(m)}_{\etwostar}(0)=0.$ The complex valued function
$$\tilde{F} : D^{\prime} \to \cc \,\,\, \text{defined as}\,\,\,
\tilde{F}(\zeta,\e)=P^{(m)}_{\e}(\zeta)-\zeta$$ is holomorphic
with respect to $(\zeta,\e) \in D^{\prime} \times
D_{r^*}(\etwostar).$ Since $P^{(m)}_{\etwostar}(0)=0,$ the point
$(0,\etwostar)$ is a zero of $\tilde{F}$, that is
$\tilde{F}(0,\etwostar)=0$.

We are interested in the zero locus of $\tilde{F}$ in the domain
$D^{\prime} \times D_{r^*}(\etwostar)$. If we assume for a moment
that $\tilde{F}(\zeta,\e)\equiv 0$ on $D^{\prime}$ then we would
have the identity $P^{(m)}_{\e}(\zeta) \equiv \zeta$ on
$D^{\prime}$ and therefore $\Pmape^m(q)\equiv q$ on the open
subset $U'_{q^*_1} \subset \Azeroprimeq.$ Because of the
analyticity of $\Pmape^m(q)$ with respect to both $q$ and $\e,$
the identity $\Pmape^m(q)\equiv q$ will hold on all of
$\Azeroprimeq$ and for all $\e \in D_{r}(0)$. In particular, it
will be true for $\e=\e_0$. But for that value the map $\Pmape^m$
has an isolated fixed point $q_1(\e_0) \in \Azeroq \subset
\Azeroprimeq$ which leads to a contradiction. Therefore
$\tilde{F}$ is not identically zero.

There are two cases for $\tilde{F}.$ Either
$\tilde{F}(\zeta,\etwostar) \equiv 0$ or
$\tilde{F}(\zeta,\etwostar) \not\equiv 0$ for $\zeta \in
D^{\prime}$. For both of those options $\tilde{F}$ can be written
as
$$\tilde{F}(\zeta,\e)=(\e-\etwostar)^bF(\zeta,\e)$$
where $F(\zeta,\etwostar)\not\equiv 0$ a nd $b \geq 0.$ When $b
> 0$ we have the first case and when $b=0$ we have the second
case.

Let us look at the zero locus of $F$. By Weierstrass' preparation
theorem \cite{Ch}, \cite{GR}, $F$ can be written as
$$F(\zeta,\e)=\prod_{j=1}^{s}(\zeta - \alpha_j(\e))\theta(\zeta,\e),$$
where $\theta(0,\etwostar) \neq 0$ and $\{\alpha_j(\e) \, : \,
j=1,...,s\}$ depend analytically on $\e,$ satisfying the
equalities $\alpha_1(\etwostar)=...=\alpha_s(\etwostar)=0$ and
possibly branching into each other. Without loss of generality, we
can think that $D^{\prime}$ is chosen small enough so that
$\theta(\zeta,\e) \neq 0$ for all $(\zeta,\e) \in D^{\prime}\times
D_{r^*}(\etwostar).$ Let
$\tilde{\alpha}_j(\e)=\phi_{q^*_1,\e}^{-1}(\alpha_j(\e))$. Since
$q_1(\e_n) \to q_1^*,$ there exists $N_0 \in \Naturals$ such that
$q_1(\e_n) \in U'_{q^*_1}$ for $n>N_0.$ By the continuity of
$q_1(\e),$ for each $\e \in D_{r^*}(\etwostar) \cap \etamax$ we
have that $q_1(\e)=\tilde{\alpha}_j(\e)$ for some $j=1,..,s.$ For
simplicity of notation, assume $q_1(\e)=\tilde{\alpha}_1(\e)$.
Thus, $q_1(\e)$ converges to $q_1^*$ as $\e \to \etwostar$ always
staying on the zero locus of $F$. Therefore we can extend
$q_1(\e)$ continuously on $\eta$ past $\etwostar$ by setting
$q_1(\e)=\tilde{\alpha}_1(\e)$ for $\e \in D_{r^*}(\etwostar) \cap
\{\e \in \eta \, : \, \e \preceq \etwostar\}.$ By construction,
the identity $\Pmape^m(\tilde{\alpha}_1(\e))=
\tilde{\alpha}_1(\e)$ holds and if we set
$q_{j+1}(\e)=\Pmape^j(\tilde{\alpha}_1(\e))$ we obtain a
continuation of the family $q_1(\e),...,q_m(\e)$ on the relatively
open arc $D_{r^*}(\etwostar) \cap \{\e \in \eta \, : \, \e \preceq
\etwostar\}$. As a result we have a continuous family
$(q_1(\e),...,q_m(\e))_{\e \in \tilde\eta}$ of periodic orbits of
$\Pmape$ defined for $\e \in \tilde\eta = (D_{r^*}(\etwostar) \cap
\{\e \in \eta \, : \, \e \preceq \etwostar\}) \cup \etamax$ which
is relatively open in $\eta$.

Since the family $\zhat_1(\e),...,\zhat_m(\e)$ is the lift of
$q_1(\e),...,q_m(\e)$ for $\e \in \etamax$ and the latter extends
on $\tilde\eta \supset \etamax,$ the former also extends on
$\etatilde$ as a family of periodic orbits for $\Pmaphate$ on the
cross-section $\Ahatzeroprime.$ This conclusion contradicts the
maximality of $\etamax$, stemming from the assumption that
$\etwostar \neq 0.$ Therefore $\etwostar=0$ and
$q_1(0),...,q_m(0)$ is a periodic orbit of
$P_{0}=id_{\Azeroprimeq}.$ For that reason,
$q_1(0)=...=q_m(0)=q^*$ inside $\Azeroprimeq$.

Take a complex chart $(U_{q^*},\phi_{q^*,0})$ on $\Aoneprimeq$
around the point $q^*$ and choose a smaller neighborhood
$U_{q^*}^{\prime} \subset U_{q^*}$ of $q^*$ such that
$\Pmape^k(U_{q^*}^{\prime}) \subset U_{q^*}$ for all $k=1,...,m$
and $\e \in D_{\tilde r_0}(0),$ where $\tilde r_0>$ is small
enough. Let $D^{\prime}=\phi_{q^*,0}(U_{q^*}^{\prime}) \subset
\Disc$ and
$$P_{q^*,\e}=\phi_{q^*,\e} \circ \Pmape \circ \phi_{q^*,\e}^{-1}
\, : \, D^{\prime} \longrightarrow \Disc.$$ Denote by
$\zeta_j(\e)=\phi_{q^*,\e}(q_j(\e))$ for $\e \in D_{\tilde r_0}(0)
\cap \etamax = \eta_0$ and $j=1,...,m$. Then
$\zeta_1(\e),...,\zeta_m(\e)$ is a periodic orbit for $P_{q^*,\e}$
in $D^{\prime}$. Notice,that due to the holomorphic nature of the
map $\Pmape,$ those $\e \in \etamax$ for which $q_i(\e)=q_j(\e),$
where $1\leq i < j \leq m,$ are isolated because the family at
$\e_0$ consists of an isolated $m-$periodic orbit. As before,
$P_{q^*,\e}(\zeta)$ is holomorphic with respect to $(\zeta,\e)$.
Then we can write it as $$P_{q^*,\e}(\zeta)=\zeta + \e^l I(\zeta)
+ \e^{l+1}R(\zeta,\e)$$ where $I(\zeta) \not\equiv 0$ and $l \geq
1.$ If we iterate this map $m$ times we obtain the representation
$$P_{q^*,\e}^m(\zeta)=\zeta + \e^l m I(\zeta) + \e^{l+1}R_{(m)}(\zeta,\e).$$
For $\e \in \eta_0 \setminus \{0\}$ the equations
\begin{align*}
P_{q^*,\e}(\zeta)-\zeta &= \e^l(I(\zeta) + \e R(\zeta,\e))=0
\,\,\,\,
\text{and}\\
P_{q^*,\e}^m(\zeta)-\zeta &= \e^l(m I(\zeta) + \e
R_{(m)}(\zeta,\e))=0
\end{align*}
are divisible by $\e^l$ and thus, become
\begin{equation}\label{equation for periodic orbits}
I(\zeta) + \e R(\zeta,\e)=0 \,\,\,\,\, \text{and} \,\,\,\,\, m
I(\zeta) + \e R_{(m)}(\zeta,\e)=0
\end{equation}
The function $I(\zeta)$ is not identically zero, so it has
isolated zeroes. Choose $D^{\prime \prime} \subset D^{\prime}$ to
be a small closed disc centered at zero, so that no zeroes of
$I(\zeta)$ are contained in $D^{\prime \prime} \setminus \{0\}.$
In particular, $I(\zeta) \neq 0$ for $\zeta \in
\partial D^{\prime \prime}.$ We can decrease the parameter radius
$r_0>0$ enough so that by Rouche's theorem \cite{Smth} the
equations (\ref{equation for periodic orbits}) will have the same
number of zeroes, counting multiplicities, as the equation
$I(\zeta)=0$. Clearly, all zeroes of $P_{q^*,\e}(\zeta)-\zeta$ are
zeroes of $P_{q^*,\e}^m(\zeta)-\zeta$ because the fixed points of
$P_{q^*,\e}$ are fixed points of $P_{q^*,\e}^m$ but not the other
way around. On the other hand, as already noted, for almost every
$\e \in D_{r_0}(0)$ there is an $m-$periodic orbit
$\zeta_1(\e),...,\zeta_m(\e)$ for the map $P_{q^*,\e}$ inside
$D^{\prime \prime}.$ Thus, we can see that
$P_{q^*,\e}^m(\zeta)-\zeta$ has at least $m$ zeroes more than
$P_{q^*,\e}(\zeta)-\zeta,$ which contradicts the fact that both of
these should have the same number of zeroes. The contradiction
comes from the assumption that $\etwostar=\e^*.$ Therefore we
conclude that $\etwostar \neq \e^*$ and in fact $\etwostar \prec
\e^*$.

Let $\eta_1 = \{\e \in \etamax \, | \, \etwostar \prec \e \prec
\e^*\}.$ Then for any $\e \in \eta_1$ at least one
$\zhat_{j_0}(\e)$ is contained in $\Ahatzeroprime$ but not in
$\Ahatzero$. Then, its image $q(\e) = \Pi_0(\zhat_{j_0}(\e))$
varies continuously on $\Azeroprimeq$ with respect to $\e \in
\etamax$. Moreover, when $\e \in \eta_1 \subset \etamax$ what
happens is that $q(\e) \in \Azeroprimeq \setminus \Azeroq$. As the
set $ \Azeroprimeq \setminus \Azeroq$ is disjoined from the domain
$E_0$, the point $q(\e)$ is located outside of $E_0$. Point 3 of
lemma \ref{Lemma peridoic orbits and families and connection with
cycles} guarantees the existence of a continuous family of marked
cycles $\{(\Delta(\e),q(\e))\}_{\e \in \etamax}$ defined on
$\etamax$. For $\e=\e_0$ the cycle $(\Delta(\e_0),q(\e_0))$ is
limit $m-$fold vertical and has a representative $\delta(\e_0)$
contained in the domain $E_0$. As $\e$ moves on $\etamax$ towards
$0$, it passes through the point $\e^*$ and as a result the point
$q(\e)$ leaves $E_0$. Thus, for any $\e \in \eta_1 \subset
\etamax$ no representative of $(\Delta(\e),q(\e))$ is contained in
$E_0$ because all of them pass through the base point $q(\e)$
which is not in $E_0$ anymore. The proof of theorem
\ref{The_Main_Theorem} is completed with $\sigma = \etamax$.

\section{Concluding remarks}

The choice of the family (\ref{ExampleFoliation}) comes into play
mostly in the proof of the existence of multi-fold vertical cycles
for line fields of type (\ref{GeneralFoliation}). It is
specifically designed to facilitate the computation in the first
part of the article, where we study the bifurcation of periodic
orbits from a resonant parabolic fixed point of the Poincar\'e
map. Establishing the link between a foliation of type
(\ref{GeneralFoliation}) and the resonant terms in the normal form
of its corresponding Poincar\'e map seems hard. We can see that in
our simple example (\ref{ExampleFoliation}) we have quite involved
computations in order to show the non-triviality of the resonant
normal form of $\Pmap$. For the second part of the article, in
which we construct a non-local Poincar\'e transformation and we
the study the topological properties and rapid evolution of the
multi-fold limit cycles of $(\ref{ExampleFoliation})$, we do not
seem to need that much the explicit form of the foliation of type
(\ref{GeneralFoliation}). In fact the central role is played by
the polynomial $H$ and its geometric-topological properties. Since
$H$ is quite simple, so is its geometry and consequently the
topology of the fiber bundle $H : E \to B$. With some additional
modifications one could choose a more complicated polynomial $H$
and carry out similar constructions and prove rapid evolution for
more general families of type (\ref{GeneralFoliation}), provided
that the existence of a periodic orbit for the Poincar\'e map is
assumed. In \cite{ND1}, or alternatively in \cite{ND2}, the reader
could see (\ref{GeneralFoliation}) studied in a more general form.
In these works the approach and the general philosophy of the
current article are preserved, but the interplay between the
topology of the foliation and the dynamics of the Poincar\'e map
are quite more interesting. The map branches and its branching is
inherently related to the homotopy class of the loop $\delta_0$ on
the surface of a fixed fiber $S_{c_0}$ of $H$.

\section*{Acknowledgements}
I would like to express my sincere gratitude to Yulij Sergeevich
Ilyashenko for proposing this problem to me. His help, great
enthusiasm, useful remarks and clever observations have been
really invaluable. I am also very thankful to John Hamal Hubbard
who has been a source of knowledge and enthusiasm during my work
on this project.

\end{document}